% written and revised (continuously) by PS and AS, May 2010 - Feb 2011

\documentclass[10pt,leqno,draft]{amsart}

\usepackage {amsfonts}
\usepackage{amsthm}
\usepackage{amsmath}
\usepackage{comment}

\pagestyle{plain}

\theoremstyle{plain}
\newtheorem{tw}{Theorem}[section]

\newtheorem {lem} [tw]{Lemma}
\newtheorem {propn}[tw] {Proposition}
\newtheorem{cor}[tw]{Corollary}

\theoremstyle{definition}
\newtheorem {deft}[tw] {Definition}

\newcommand{\bc} {\mathbb{C}}

\newcommand{\br}{\mathbb{R}}

\newcommand{\alg} {\mathsf{A}}

\newcommand {\id} {{\mathrm{id}}}

\newcommand{\clg} {\mathsf{C}}

\newcommand{\dlg} {\mathsf{D}}

\newcommand{\blg}{\mathsf{B}}

\newcommand{\Alg}{\mathcal{A}}

\newcommand{\tu}{\textup}

\newcommand{\Mpsi}{\mathfrak{M}_{\psi}}
\newcommand{\Npsi}{\mathfrak{N}_{\psi}}

\newcommand{\Cou}{\epsilon}
\newcommand{\Com}{\Delta}

\newenvironment{rlist}
{

\begin{enumerate}}
{\end{enumerate}}

\newcommand{\ot}{\otimes}

\numberwithin{equation}{section}

\let\cop\Com
\DeclareMathOperator\dom{dom}
\DeclareMathOperator\clsp{\overline{\text{span}}}

\newcommand\set[2]{\{\,#1: #2\,\}}
\newcommand{\pair}[1]{\langle #1\rangle}
\newcommand{\Mphi}{\mathfrak{M}_{\phi}}
\newcommand{\Nphi}{\mathfrak{N}_{\phi}}
\newcommand{\sub}{\subset}
\newcommand{\bus}{\supset}
\newcommand{\inv}{^{-1}}
\newcommand{\conj}{\overline}

\newcommand\complex{\mathbb{C}}

\begin{document}

\author{Pekka Salmi}
\address{Department of Pure Mathematics,
University of Waterloo, Waterloo, ON, N2L 3G1, Canada}
\email{pekka.salmi@iki.fi}

\author{Adam Skalski}
\address{Institute of Mathematics of the Polish Academy of Sciences,
ul.\'Sniadeckich 8, 00-956 Warszawa, Poland}
\email{a.skalski@impan.pl}

\title{\bf Idempotent states on locally compact quantum groups}

\keywords{Locally compact quantum groups, idempotent states, compact quantum subgroups} \subjclass[2000]{Primary 46L65, Secondary 43A05, 46L30, 60B15}

\begin{abstract}
\noindent Idempotent states on a unimodular coamenable locally compact
quantum group $\alg$ are shown to be in one-to-one
correspondence with right invariant expected $C^*$-subalgebras of
$\alg$. Haar idempotents, that is, idempotent
states arising as Haar states on compact quantum subgroups of $\alg$,
are characterised and shown to be invariant
under the natural action of the modular element. This leads to the
one-to-one correspondence between
Haar idempotents on $\alg$ and  right invariant symmetric expected
$C^*$-subalgebras of $\alg$ without the
unimodularity assumption. Finally the tools developed in the first part of the paper are applied to show that the coproduct of a coamenable locally compact quantum group restricts to a continuous coaction on each right invariant expected  $C^*$-subalgebra.
\end{abstract}

\maketitle

Idempotent probability measures on locally compact groups arise naturally as limit distributions of random walks.
By analogy, when one considers quantum random walks in the setup
provided by topological quantum groups
\cite{franz+skalski08},
one is led to consider idempotent states on locally compact quantum
groups. Since the work of Kawada and It\^o \cite{KawadaIto},
idempotent probability measures have been well understood, as they
all arise as Haar measures on compact subgroups. In the quantum world,
as shown in \cite{Pal}, the situation is
more complicated, as already some finite quantum groups admit
idempotent states which cannot be canonically
associated with any quantum subgroup. Motivated by this discovery
U.\,Franz and the second-named author, later
joined by R.\,Tomatsu, have begun a systematic investigation of
idempotent states on finite and compact quantum
groups [FS$_{2-3}$, FST]. In particular, necessary and
sufficient conditions for such states to be
\emph{Haar idempotents}, i.e.\ to arise as Haar states on closed quantum subgroups, have been identified, and
close relations to expected right invariant $C^*$-subalgebras (called in \cite{FS} coidalgebras) uncovered. On the
other hand the first-named author, inspired by the harmonic analysis considerations due to Lau and Losert  showed
in a recent paper \cite{S}
a one-to-one correspondence between compact quantum subgroups of a coamenable locally
compact quantum group $\alg$ and certain right invariant unital $C^*$-subalgebras of $\alg$.

Motivated by these developments, in this paper we study idempotent
states and related structures on coamenable
\emph{locally compact} quantum groups in the sense of \cite{KV}. At
first it might appear that the algebraic
formalism here is similar to that encountered in [FS$_{2-3}$] and
\cite{FST}, but technical aspects of the locally
compact theory (such as the absence of a natural dense Hopf
$^*$-algebra) make the problems we investigate more
complicated and necessitate developing new approaches. In particular
we discover an unexpected role played by
unimodularity: our proof of the main result characterising idempotent
states in terms of right invariant expected
$C^*$-subalgebras requires the assumption that the left and right Haar
weights coincide. For Haar idempotents this
restriction is not necessary, as we show that they are invariant in a
natural sense under the modular element
allowing the passage between the left and right Haar weights.

Classically it is well known that if $G$ is a locally compact group
and $H$ is a compact subgroup of $G$, then $G$ acts continuously (by
multiplication) on the algebra of continuous functions on $G$ constant
on the cosets of $H$. Using the results obtained in the first part of
the paper we show a broad quantum counterpart of this fact -- a
coamenable locally compact quantum group coacts continuously not only
on the fixed point algebras for the canonical actions of its compact
quantum subgroups (which has been earlier proved in \cite{Soltan}),
but also on arbitrary right invariant expected $C^*$-subalgebras.

As we are interested in quantum counterparts of all regular Borel
measures on a locally compact group $G$ (and not
only those which are absolutely continuous with respect to the Haar
measures), it is natural to work with the
$C^*$-algebraic versions of locally compact quantum groups. The
standing assumption of coamenability allows us to avoid certain
technical subtleties related to the notion of a quantum subgroup and
can be explained by the desire to model the classical fact that there is a
natural one-to-one correspondence between
closed subgroups of a locally compact group $G$ and measures on $G$
induced by Haar measures on these subgroups --
in general, Haar weights on non-coamenable locally compact quantum
groups need not be faithful.

The detailed plan of the paper is as follows: in Section 1 we briefly
recall basic facts on weights, conditional
expectations for $C^*$-algebras, multiplier algebras and strict
topology, establish the notation and terminology
related to locally compact quantum groups and prove a few technical
results required later in the paper. Section~2 is devoted to
idempotent states on a locally compact quantum group
-- in particular we show that the idempotent
property of a state can be characterised by the fact that its
associated convolution operator is a conditional
expectation, and we introduce a natural partial order on the set of
idempotent states. Section 3 contains the main
results of the paper. We first establish the natural
correspondence between idempotent states and right
invariant expected $C^*$-subalgebras under the assumption of
unimodularity, then prove that Haar idempotents are
in a natural sense invariant under the modular element and finally
combine these facts to obtain the natural
correspondence between Haar idempotents and right invariant symmetric
expected $C^*$-subalgebras in the general
case. The last short section is devoted to showing that the coproduct
of a coamenable locally compact quantum
group restricts to a continuous coaction on each right invariant
expected $C^*$-subalgebra.

\section{Preliminary facts}

In this section we describe basic facts related to weight-preserving
conditional expectations in the
$C^*$-algebraic context and recall the basic definitions and facts of
the theory of locally compact quantum groups. The symbol $\ot$ denotes the minimal/spatial tensor product of $C^*$-algebras.

\subsection*{Weights and conditional expectations on $C^*$-algebras}
We begin by gathering a few facts which are standard for normal
weights on von Neumann algebras and are probably well known also in
this, $C^*$-algebraic, context.

\begin{deft}
A \emph{weight} on a $C^*$-algebra $\alg$ is a map $\psi:\alg_+ \to
[0, \infty]$ that is additive and homogenous
with respect to scalars in $\br_+$. A weight $\psi$ is \emph{densely defined} if
\[
\Mpsi^+:= \set{a\in \alg_+}{\psi (a) < \infty}
\]
is dense in $\alg_+$. It is \emph{faithful} if the equality
$\psi(a)=0$, with $a\in \alg_+$, implies that $a=0$.
\end{deft}

A weight $\psi$ on $\alg$ can be uniquely extended (as a linear
functional) to the linear span of $\Mpsi^+$; the latter is equal to
$\Npsi^* \Npsi$, where
\[
\Npsi = \set{a \in\alg}{a^* a \in \Mpsi^+}.
\]
This can be found for example in \cite{Takesaki2}.

\begin{deft}
Let $\psi$ be a weight on a $C^*$-algebra $\alg$,
and let $\clg$ be a $C^*$-subalgebra of $\alg$.
A norm-one projection $E_{\clg}$ from $\alg$ onto
$\clg$ is called a \emph{conditional expectation}. We say that $E_{\clg}$
\emph{preserves} $\psi$ if for all $a \in \Mpsi^+$
\[
E_{\clg} (a) \in \Mpsi^+, \quad \psi(E_{\clg} (a)) = \psi(a).
\]
If a conditional expectation onto $\clg$ preserving $\psi$ exists,
$\clg$ is called \emph{$\psi$-expected}.
\end{deft}

Note that if $E_{\clg}$ preserves $\psi$, then  by linearity the
equality $\psi(E_{\clg} (a)) = \psi(a)$ is valid for all $a \in \Npsi^* \Npsi$.

\begin{lem} \label{condexp}
Let $\psi$ be a densely defined faithful weight on a $C^*$-algebra
$\alg$. Suppose that $\clg$ is a $C^*$-subalgebra
of $\alg$ such that there exists a $\psi$-preserving conditional
expectation $E_{\clg}$ from $\alg$ onto $\clg$.
Then $E_{\clg}$ is uniquely determined, $E_{\clg} (\Npsi)\sub
\Npsi$ and $\Npsi \cap \clg$ is dense in $\clg$.
Moreover, if we define
$\clg^{\perp}:=\set{a \in \alg \cap \Npsi}{E_{\clg} (a) =0}$, then
\[
\clg^{\perp}
= \set{a\in\alg\cap\Npsi}{\psi(c^*a)= 0 \textup{ for all } c\in\clg\cap\Npsi}.
\]
\end{lem}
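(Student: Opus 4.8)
The plan is to prove the three assertions in turn, using the standard GNS-type properties of conditional expectations and the faithfulness of $\psi$ as the key tool.

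\medskip

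\noindent\textbf{Uniqueness of $E_{\clg}$.}
First I would show that a $\psi$-preserving conditional expectation satisfies the symmetry condition $\psi(c^*a) = \psi(c^* E_{\clg}(a))$ for suitable $a$, $c$. Using the module property of a conditional expectation, $E_{\clg}(c^* a) = c^* E_{\clg}(a)$ for $c \in \clg$, together with $\psi \circ E_{\clg} = \psi$ on $\Npsi^*\Npsi$, gives (once one checks the relevant products lie in $\Npsi^*\Npsi$, which follows from the Cauchy--Schwarz inequality for weights and $E_\clg(\Npsi)\subset\Npsi$) that
\[
\psi(c^* a) = \psi(E_{\clg}(c^*a)) = \psi(c^* E_{\clg}(a)), \qquad c\in\clg\cap\Npsi.
\]
Equivalently $\psi\bigl(c^*(a - E_{\clg}(a))\bigr) = 0$ for all $c \in \clg\cap\Npsi$. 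If $E_{\clg}'$ were another such expectation, then for $a\in\Npsi$ both $E_{\clg}(a)$ and $E_{\clg}'(a)$ lie in $\clg\cap\Npsi$ and $\psi\bigl(c^*(E_{\clg}(a) - E_{\clg}'(a))\bigr) = 0$ for all such $c$; taking $c = E_{\clg}(a) - E_{\clg}'(a)$ and invoking faithfulness of $\psi$ forces $E_{\clg}(a) = E_{\clg}'(a)$ on $\Npsi$, and density of $\Npsi$ together with norm-continuity of conditional expectations yields $E_{\clg} = E_{\clg}'$.

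\medskip

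\noindent\textbf{$E_{\clg}(\Npsi)\subset\Npsi$ and density of $\Npsi\cap\clg$ in $\clg$.}
For $a\in\Npsi$ we have $a^*a\in\Mpsi^+$, hence $E_{\clg}(a^*a)\in\Mpsi^+$ by the $\psi$-preserving property; the Kadison--Schwarz inequality $E_{\clg}(a)^*E_{\clg}(a) \le E_{\clg}(a^*a)$ for the $2$-positive map $E_{\clg}$ then gives $E_{\clg}(a)^*E_{\clg}(a)\in\Mpsi^+$, i.e.\ $E_{\clg}(a)\in\Npsi$. Since $E_{\clg}(a)\in\clg$ as well, $E_{\clg}(\Npsi)\subset\Npsi\cap\clg$. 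For density, note $\Npsi$ is dense in $\alg$ (as $\psi$ is densely defined and $\Npsi\supset\Mpsi^+$ up to span), so $E_{\clg}(\Npsi)$ is dense in $E_{\clg}(\alg) = \clg$ by continuity and surjectivity of $E_{\clg}$; since $E_{\clg}(\Npsi)\subset\Npsi\cap\clg\subset\clg$, density of $\Npsi\cap\clg$ in $\clg$ follows.

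\medskip

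\noindent\textbf{Identification of $\clg^{\perp}$.}
The inclusion $\clg^\perp\subset\{a\in\alg\cap\Npsi : \psi(c^*a)=0 \text{ for all } c\in\clg\cap\Npsi\}$ is exactly the symmetry identity established above: if $E_{\clg}(a)=0$ then $\psi(c^*a) = \psi(c^*E_{\clg}(a)) = 0$. Conversely, suppose $a\in\alg\cap\Npsi$ with $\psi(c^*a)=0$ for all $c\in\clg\cap\Npsi$. Then $E_{\clg}(a)\in\Npsi\cap\clg$ by the previous paragraph, and we may take $c = E_{\clg}(a)$. Combining $\psi\bigl(E_{\clg}(a)^* a\bigr) = 0$ with the symmetry identity $\psi\bigl(E_{\clg}(a)^* a\bigr) = \psi\bigl(E_{\clg}(a)^* E_{\clg}(a)\bigr)$ gives $\psi\bigl(E_{\clg}(a)^*E_{\clg}(a)\bigr) = 0$, and faithfulness of $\psi$ forces $E_{\clg}(a) = 0$, i.e.\ $a\in\clg^\perp$.

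\medskip

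\noindent\textbf{Main obstacle.}
The routine-looking steps hide one genuine technical point: one must be careful that all the products appearing (such as $c^*a$ with $c\in\clg\cap\Npsi$ and $a\in\Npsi$, or $E_{\clg}(a)^*a$) actually lie in the linear span of $\Mpsi^+$, i.e.\ in $\Npsi^*\Npsi$, so that $\psi$ is defined and linear there and the identity $\psi\circ E_{\clg} = \psi$ may legitimately be applied. This is where the precise bookkeeping with $\Npsi$, the polarization identity $4c^*a = \sum_{k=0}^{3} i^k (a + i^k c)^*(a + i^k c)$, and the established fact $E_{\clg}(\Npsi)\subset\Npsi$ all have to be used in concert; the Kadison--Schwarz inequality for $E_\clg$ is what makes $E_\clg$ respect $\Npsi$ in the first place. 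Once this membership is secured the rest is a clean application of faithfulness.
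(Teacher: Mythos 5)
Your proposal is correct and follows essentially the same route as the paper: Kadison--Schwarz to get $E_{\clg}(\Npsi)\sub\Npsi$ and the density of $\clg\cap\Npsi$, the module property plus $\psi$-preservation to get $\psi(c^*a)=\psi(c^*E_{\clg}(a))$, and faithfulness of $\psi$ to identify $\clg^\perp$ and force uniqueness. The only cosmetic difference is that the paper packages uniqueness via the decomposition $\Npsi=\clg^\perp\oplus(\clg\cap\Npsi)$, whereas you compare two expectations directly and apply faithfulness to their difference; the underlying argument is the same.
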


\begin{proof}
Let $a \in \Npsi$. As a conditional expectation,  $E_{\clg}$ is a
contractive completely positive map and by the Kadison--Schwarz
inequality $E_{\clg}(a^*)E_{\clg}(a) \leq E_{\clg}(a^*a)$.
Since $a^*a\in \Mpsi^+$ and $E_{\clg}$ is $\psi$-preserving, we have
\[
\psi( E_{\clg}(a^*)E_{\clg}(a)) \leq \psi(E_{\clg}(a^*a))= \psi(a^*a) < \infty.
\]
Thus $E_{\clg}(\Npsi) = \clg \cap \Npsi$ and $\clg \cap \Npsi$ is
dense in $\clg$. Further if $a \in \Npsi$ and $c \in \clg \cap \Npsi$, then
\[
\psi(c^*a) = \psi(E_{\clg}(c^*a)) = \psi (c^* E_{\clg}(a)).
\]
Hence if $E_{\clg}(a)=0$, then $\psi(c^*a)=0$.
On the other hand if $\psi(c^*a)=0$ for all $c \in \clg \cap \Npsi$,
then $0 = \psi(E_{\clg}(a)^* a) = \psi(E_{\clg}(E_{\clg}(a)^*a))= \psi(E_{\clg}(a)^*E_{\clg}(a))$ and
faithfulness of $\psi$ implies that $E_{\clg}(a)=0$.

The above arguments imply that $\Npsi = \clg^{\perp} \oplus (\clg \cap \Npsi)$ (as a vector space). As $\Npsi$ is
dense in $\alg$ and $\clg^{\perp}$ depends only on $\clg$ and $\psi$, the uniqueness of the $\psi$-preserving
conditional expectation onto $\clg$ follows.
\end{proof}

\subsection*{Multiplier algebra}
The \emph{multiplier algebra} $M(\alg)$ of a $C^*$-algebra $\alg$ is the
largest unital $C^*$-algebra that contains
$\alg$ as an essential ideal; in other words $M(\alg)$ is the largest reasonable
unitisation of $\alg$. We will write $1_{\alg}$ to denote the unit of $M(\alg)$.
The \emph{strict topology} on $M(\alg)$ is the
topology generated by the seminorms
$x\mapsto \|xa\|$, $x\mapsto \|ax\|$, where $a$
runs through the elements of $\alg$. A $^*$-homo\-mor\-phism $\pi:
\alg\to M(\blg)$, where $\blg$ is another
$C^*$-algebra, is \emph{nondegenerate} if $\pi(\alg)\blg$ is dense in
$\blg$. A $^*$-homo\-mor\-phism $\pi$ is
nondegenerate if and only if the net $(\pi(e_i))_{i \in I}$ converges strictly to
$1_\blg$ for any (or for every)
bounded approximate identity $(e_i)_{i \in I}$ of $\alg$.
Moreover, we say that a $C^*$-subalgebra $\clg$ of $\alg$
is nondegerate if the embedding $\clg\hookrightarrow \alg$
is nondegenerate. A bounded linear map $\alg\to M(\blg)$ that admits a
(unique) extension to $M(\alg)$ which is strictly continuous on bounded
sets is said to be \emph{strict} (this unique extension is denoted by
the same symbol as the original map, unless stated otherwise). Every
nondegenerate $^*$-homo\-mor\-phism is strict. So are all bounded
linear functionals $\omega: \alg\to \complex$ as well as slice maps of
bounded linear functionals: for example
$\omega\ot\id_\blg: \alg\otimes \blg\to \blg$. Also nondegenerate
completely positive maps are strict: a completely positive map
$P: \alg\to M(\blg)$ is nondegenerate if for some bounded
approximate identity $(e_i)_{i \in I}$ of $\alg$ the net $(P(e_i))_{i \in I}$ converges
strictly to $1_{\blg}$ in $M(\blg)$. The last fact
implies that if $\blg$ is a nondegenerate $C^*$-subalgebra of $\alg$
and $E$ is a conditional expectation from $\alg$ to $\blg$, then $E$
is strict. A good reference to nondegenerate completely positive maps,
as well as multiplier algebras in general, is \cite{lance}.

\begin{lem} \label{multmod}
Let $\alg$ be a $C^*$-algebra, let $\clg$ be a nondegenerate
$C^*$-subalgebra of $\alg$ and let $E:\alg \to \clg$ be a conditional
expectation onto $\clg$.
Then the strict extension $\tilde{E}$ of $E$ to $M(\alg)$
is a conditional expectation onto $M(\clg)$. In particular,
\[
\tilde{E}(m n) = \tilde{E}(m) n\qquad\text{and}\qquad
\tilde{E}(n m) = n \tilde{E}(m)
\]
for every $m \in M(\alg)$ and $n \in M(\clg)$.
\end{lem}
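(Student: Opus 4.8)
The plan is to verify each of the three claims in turn: that $\tilde E$ maps $M(\alg)$ into $M(\clg)$, that it restricts to the identity on $M(\clg)$ and is contractive completely positive (hence a conditional expectation by Tomiyama's theorem, or directly by checking the norm-one projection property), and finally the module property. First I would observe that $\clg$ being nondegenerate in $\alg$ means exactly that $E$ is a nondegenerate completely positive map, so by the fact recalled just before the lemma $E$ is strict and the extension $\tilde E: M(\alg) \to M(\alg)$ exists, is completely positive, unital (since $\tilde E(1_\alg)$ is the strict limit of $E(e_i) = e_i$, which converges strictly to $1_\alg$ as $\clg$ is nondegenerate, but this limit also lies in $M(\clg)$ — see below — so in fact $\tilde E(1_\alg) = 1_\clg = 1_\alg$), and contractive. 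Completely positive unital contractive maps have norm one, so once we know $\tilde E$ is a projection onto $M(\clg)$ we are done with the first two points.

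The key step is to show $\tilde E(M(\alg)) \sub M(\clg)$ and $\tilde E|_{M(\clg)} = \id$. For $m \in M(\alg)$ and $c \in \clg$, I would compute $\tilde E(m) c$ by approximating: using a bounded approximate identity $(e_i)$ of $\clg$ (which is also one for $\clg$ as a nondegenerate subalgebra), $\tilde E(m) c = \lim_i \tilde E(m) e_i c$, and since $e_i c \in \clg \sub \alg$ and $\tilde E$ extends $E$ with the module property over $\alg$ being inherited from $E$'s $\clg$-bimodule property (here I would invoke that the ordinary conditional expectation $E$ satisfies $E(xc) = E(x)c$, $E(cx) = cE(x)$ for $c \in \clg$, $x \in \alg$), we get $\tilde E(m)\, e_i c = \tilde E(m \,e_i c)\cdot$ — more carefully, $\tilde E(m) e_i c = \tilde E(m e_i) c$ by strict continuity and the $\clg$-module property of $E$ applied to $\tilde E(m e_i) = \tilde E(m) e_i$, and $m e_i \in \alg$ so $\tilde E(m e_i) = E(m e_i) \in \clg$, whence $\tilde E(m) e_i c \in \clg$ and so $\tilde E(m) c \in \clg$ after passing to the limit; symmetrically $c\, \tilde E(m) \in \clg$. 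Thus $\tilde E(m) \in M(\clg)$. For $n \in M(\clg) \sub M(\alg)$ and any $a \in \alg$, $\tilde E(n) a = \lim_i \tilde E(n e_i') a$ along a bounded approximate identity $(e_i')$ of $\alg$; writing instead $\tilde E(n) a = \tilde E(n)\lim (e_i' a)$ and using that $e_i' a \to a$ with $n e_i' \in M(\clg)\cdot\alg$ — actually the cleanest route is: for $c \in \clg \cap \alg$ we have $\tilde E(nc)$, and $nc \in \clg$ so $\tilde E(nc) = E(nc) = nc$ since $E$ fixes $\clg$ and $nc \in \clg$; then $\tilde E(n) c = \tilde E(nc) = nc$ for all $c$ in a dense subset of $\clg$ suffices to give $\tilde E(n) = n$ by nondegeneracy. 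The module property in the displayed equation then follows from strict continuity: $\tilde E(mn) = \lim_i \tilde E(m n e_i)$ with $n e_i \in \clg$ when $n \in M(\clg)$ (taking $(e_i)$ a bounded approximate identity of $\clg$), and $\tilde E(m \cdot n e_i) = \tilde E(m)\, n e_i \to \tilde E(m) n$; symmetrically for the other side.

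The main obstacle I anticipate is purely bookkeeping: keeping straight which approximate identity and which strict-continuity-on-bounded-sets argument is invoked at each stage, and making sure the manipulations stay inside the regime where strict continuity applies (bounded nets). In particular one must be slightly careful that the identity $\tilde E(m c) = \tilde E(m) c$ for $m \in M(\alg)$, $c \in \clg$ — the strict extension of the $\clg$-module law for $E$ — is legitimately obtained: this follows by fixing $c$, noting both sides are strictly continuous on bounded sets in $m$, and agreeing on the strictly dense subalgebra $\alg$ where it is the defining module property of the conditional expectation $E$. Once that identity is in hand, everything else is a short limiting argument, and the fact (recalled before the lemma) that nondegenerate completely positive maps are strict does all the analytic heavy lifting.
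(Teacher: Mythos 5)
Your proposal is correct and follows essentially the same route as the paper: nondegeneracy of $\clg$ makes $E$ strict, the module identity $\tilde{E}(mc)=\tilde{E}(m)c$ for $m\in M(\alg)$, $c\in\clg$ is obtained from strict continuity on bounded sets together with strict density of $\alg$ (the paper uses a bounded strictly convergent net via Proposition 1.4 of \cite{lance}), and the rest follows by the same limiting arguments with approximate identities and the identification $M(\clg)\subset M(\alg)$. The only cosmetic differences (using a bounded approximate identity of $\clg$ instead of a bounded net converging strictly to $n$, and verifying $\tilde{E}|_{M(\clg)}=\id$ via $\tilde{E}(n)c=nc$ rather than $\tilde{E}(1_\clg)n=n$) do not change the argument.
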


\begin{proof}
As  $\clg$ is assumed to be nondegenerate in $\alg$, it follows from
Proposition 2.3 of \cite{lance} that we can identify $M(\clg)$ in a canonical
way with a subalgebra of $M(\alg)$. The well-known module
property of conditional expectations states that for all $a \in \alg$,
$c \in \clg$
\[
E(ac) = E(a) c.
\]
Fix $m\in M(\alg)$. By a version of the Kaplansky theorem for strictly dense
algebras (Proposition 1.4 of \cite{lance}),
there is a bounded net $(a_i)_{i \in I}$ in $\alg$
that converges strictly to $m$.
As $\tilde{E}$ is  strictly continuous on bounded sets, we obtain
\[
E(mc) = \lim_{i \in I} E(a_i c) = \lim_{i \in I} E(a_i) c= \tilde{E}(m) c.
\]
Let then $n \in M(\clg)\sub M(\alg)$ and let $(c_i)_{i \in I}$ be a
bounded net in $\clg$ strictly convergent to $n$. By
nondegeneracy it is also strictly convergent to $n$ in
$M(\alg)$. Hence $(mc_i)_{i \in I}$ is a bounded net in $\alg$
strictly convergent to $mn$, and
\[
\tilde{E}(mn) = \lim_{i \in I} E(m c_i) =
\lim_{i \in I} \tilde{E}(m) c_i = \tilde{E}(m) n.
\]
In particular,
\[
\tilde{E}(n) = \tilde{E}(1_{\clg}n) = \tilde{E}(1_{\clg}) n = n,
\]
so $\tilde{E}$ maps $M(\alg)$ onto $M(\clg)$. Since
$\tilde{E}$ is also a projection of norm one,
it is a conditional expectation.
\end{proof}

\subsection*{Elements affiliated with a $C^*$-algebra}

The noncommutative analogue of unbounded continuous functions
is given by elements affiliated with a $C^*$-algebra.
These were introduced by Woronowicz in \cite{Woraff}
(and also by Baaj in \cite{Baajthese}).
Let $\alg$ be a $C^*$-algebra. A densely defined operator
$T: \alg\bus\dom(T)\to \alg$ is \emph{affiliated} with $\alg$
if there exists $z_T\in M(\alg)$ with $\|z_T\|\leq 1$ such that
\[
x\in \dom(T)\text{ and }y = Tx
\]
if and only if
\[
x = (1_\alg-z_T^*z_T)^{1/2}a \text{ for some }a\in \alg
\text{ and }y = z_T a.
\]
The element $z_T$, which is unique when it exists,
is called the \emph{$z$-transform} of $T$.
Elements of the multiplier algebra $M(\alg)$
are exactly the bounded operators on $\alg$
that are affiliated with $\alg$. If $\alg$ is
unital, every element affiliated with $\alg$ is in $\alg$. If
$\alg\approx C_0(X)$ for a locally compact space $X$, then
the elements affiliated with $\alg$ correspond
precisely to the continuous, not necessarily bounded, functions on~$X$.

Suppose that $\alg$ and $\blg$ are $C^*$-algebras,
$\pi: \alg\to M(\blg)$ is a nondegenerate $^*$-homo\-mor\-phism,
and $T$ is affiliated with $\alg$.
By Theorem 1.2 of \cite{Woraff}, there is a unique operator $\pi(T)$
such that $\pi(T)$  is affiliated with $\blg$,
the set $\pi(\dom(T))\blg$ is a core of $\pi(T)$ and
\begin{equation} \label{eq:homo-aff}
\pi(T)\bigl(\pi(a)b\bigr) = \pi(Ta)b
\end{equation}
for every $a$ in $\dom(T)$ and $b$ in $\blg$.
Moreover, the composition rule
\[
\rho(\pi(T)) = (\rho\circ\pi)(T)
\]
is true when both $\pi$ and $\rho$ are nondegenerate $^*$-homo\-mor\-phisms.

Woronowicz \cite{Woraff} developed functional calculus
of normal affiliated elements; see also \cite{WorNap} and
\cite{kus:funct-calc}. We will need it to
deal with powers of strictly positive elements.
An operator $T$ affiliated with $\alg$ is
\emph{strictly positive} if it is positive (i.e.\ $z_T$ is
positive) and has a dense range. Such an operator is  necessarily injective.
If $T$ is a normal element affiliated with $\alg$ and $f$ is a continuous
complex-valued function defined on the spectrum of $T$,
then $f(T)$ is a well-defined operator that is affiliated with
$\alg$. Moreover, if $\pi: \alg\to M(\blg)$ is a
nondegenerate $^*$-homo\-mor\-phism, then
\begin{equation} \label{eq:fc-comm}
f\bigl(\pi(T)\bigr) = \pi\bigl(f(T)\bigr).
\end{equation}
This is equation (1.4) of \cite{WorNap}. However, we
need to apply the above equation also to negative
powers of strictly positive operators in which case
we need to exclude $0$ from the spectrum of $T$. The equation
still holds true in that case, as shown in
Proposition 6.17 of \cite{kus:funct-calc}.
By Proposition 7.11 of \cite{kus:funct-calc}, all real powers of a
strictly positive operator affiliated with $\alg$ are
also strictly positive and affiliated with $\alg$.

\subsection*{Locally compact quantum groups in the $C^*$-algebraic
  framework}

We follow the $C^*$-algebraic approach to locally compact quantum groups
due to Kustermans and Vaes \cite{KV}.
\begin{deft}
A \emph{coproduct} on a $C^*$-algebra $\alg$ is
a nondegenerate $^*$-homo\-mor\-phism
$\Com: \alg\to M(\alg\otimes\alg)$ that
is coassociative:
\[
(\id_\alg\otimes \Com)\Com = (\Com\otimes\id_\alg)\Com.
\]
A $C^*$-algebra $\alg$ with a coproduct $\Com$
is a \emph{locally compact quantum group} if
the quantum cancellation laws
\[
\clsp\Com(\alg)(\alg\otimes 1_\alg) = \alg\ot \alg,\quad
\clsp\Com(\alg)(1_\alg\otimes\alg)  = \alg\ot \alg
\]
are satisfied and if there exist faithful KMS-weights
$\phi$ and $\psi$ on $\alg$ such that
$\phi$ is \emph{left invariant}
\[
\phi((\omega\ot\id)\Com(a)) = \omega(1_\alg)\phi(a)
\qquad (\omega\in \alg^*_+, a\in\Mphi^+)
\]
and $\psi$ is \emph{right invariant}
\[
\psi((\id\ot\omega)\Com(a)) = \omega(1_\alg)\psi(a)
\qquad (\omega\in \alg^*_+, a\in\Mpsi^+).
\]
(The KMS-condition is a technical condition which we
shall not use explicitly. For our purposes it is enough
to know that the weights are densely defined and
lower semicontinuous.)
The left invariant weight $\phi$ is called the
\emph{left Haar weight} and the right invariant $\psi$ the
\emph{right Haar weight}.
The quantum group $\alg$ is said to be \emph{unimodular} if
$\phi=\psi$. The faithfulness assumption means that we are dealing with
the reduced (as opposed to the universal, see \cite{kus:univ}) version of
$\alg$.

If the $C^*$-algebra $\alg$ is unital, we say that the quantum group
$\alg$ is \emph{compact}. In this case there is a unique \emph{Haar state}
of $\alg$ that is both left and right invariant.
Compact quantum groups are often defined so that the Haar state
is not necessarily faithful. This makes no difference in
our setting, where we consider compact quantum subgroups
of a coamenable locally compact quantum group:
see the remark at the end of this section.

A \emph{counit} is a linear functional $\epsilon$ on $\alg$ such that
\[
(\epsilon\ot\id_\alg)\Com = (\id_\alg\ot\epsilon)\Com = \id_\alg.
\]
A quantum group that has a bounded counit is said to be
\emph{coamenable} \cite{bedos-tuset}.
\end{deft}

Suppose that $\alg$ is a locally compact quantum group.
Then the GNS rep\-re\-sen\-ta\-tion associated with the left Haar weight
$\phi$ is faithful, and so we may consider $\alg$
as a $C^*$-subalgebra of $B(H)$, where $H$ is the Hilbert
space obtained from the GNS construction.

The \emph{right multiplicative unitary} $V$ is
a very special unitary operator on $H\otimes H$ which belongs to the
multiplier algebra $M(K(H)\ot \alg))$
(where $K(H)$ denotes the algebra of compact operators on $H$)
and which completely determines $\alg$. The coproduct $\Com$ is given by
\[
\Com(a) = V(a\otimes 1)V^*\qquad(a\in\alg)
\]
and the $C^*$-algebra $\alg$ itself is the norm closure of
\[
\set{(\sigma\ot\id)V}{\sigma\in B(H)_*}.
\]
Here $B(H)_*$ denotes the predual of $B(H)$: the
weak*-continuous functionals on $B(H)$.
The multiplicative unitary $V$ satisfies the pentagonal relation
\[
V_{12}V_{13}V_{23} = V_{23}V_{12}.
\]
In the above formula we use the leg numbering notation:
$V_{12} = V\otimes 1$, $V_{23} = 1\ot V$
and $V_{13} = (1\ot \Sigma)(V\ot 1)(1\ot\Sigma)$,
$\Sigma(\xi\ot\eta) = \eta\ot\xi$.

The \emph{antipode} $S$ of the quantum group $\alg$
is a closed, densely defined operator on $\alg$.
The elements $(\sigma\ot\id)V$, with $\sigma\in B(H)_*$, form a core of $S$ and
\begin{equation} \label{eq:antipode}
S((\sigma\ot\id)V) = (\sigma\ot\id)V^*.
\end{equation}
It is more common to use the formula
\[
S((\id\ot\sigma)W) = (\id\ot\sigma)W^*,
\]
where $W$ is the left multiplicative unitary; see Proposition~8.3 of \cite{KV}.
Equation \eqref{eq:antipode} follows from Proposition~8.3 of \cite{KV}
by using the identity
\[
V = (\hat J\ot\hat J)\Sigma W^* \Sigma (\hat J\ot\hat J)
\]
where $\Sigma: H\ot H \to H\ot H$ is the flip map and $\hat J: H\to H$
is the anti-unitary modular conjugation coming from the Tomita-Takesaki theory for the left Haar weight of the dual locally compact quantum group $\hat{\alg}$ (see \cite{Vaesthesis}).

When $\alg$ is not unimodular, the modular element $\delta$
provides a passage between the Haar weights $\phi$ and $\psi$.
The modular element $\delta$ is a strictly positive,
unbounded operator on $H$ that is affiliated with the
$C^*$-algebra $\alg$. Formally,
\[
\psi(a) = \phi(\delta^{1/2}a\delta^{1/2}), \;\;\; a \in \Mpsi^+ .
\]

A $C^*$-algebra $\blg$ acts in a natural way on $\blg^*$:
for each $\mu \in \blg^*, b \in\blg$ we define $\mu_b\in
\blg^*$ and $ _b \mu \in  \blg^*$ by
\[
\mu_b (c) = \mu (bc), \quad _b \mu(c) = \mu(cb).
\]

Let $\alg$ be a coamenable locally compact quantum group with
left and right invariant weights $\phi$ and $\psi$, respectively.
If $\mu, \nu \in \alg^*$ we write
\[
\mu \star \nu = (\mu \ot \nu) \Com, \qquad L_{\mu}
= (\mu \ot \id_{\alg}) \Com, \qquad R_{\mu} = (\id_{\alg} \ot \mu) \Com.
\]
The fact that the maps $L_{\mu}$ and $R_{\mu}$ take values in $\alg$
(and not just in $M(\alg)$) is a consequence of
the inclusions of both $\Delta(\alg)(\alg \ot 1_{\alg})$ and
$\Delta(\alg)(1_{\alg} \ot \alg)$ in $\alg \ot \alg$
and the usual factorisation of functionals on $\alg$ resulting from
Cohen's factorisation theorem (for all $\mu
\in \alg^*$ there exists $\nu, \nu' \in \alg^*$ and $a,a' \in \alg$
such that $\mu = \nu_a = \,_{a'} \nu'$). It
can be easily checked that if $\mu, \nu \in\alg^*$ then $L_{\mu} =
L_{\nu}$ implies $\mu = \nu$ and we always have
$L_{\mu} L_{\nu} = L_{\nu \star \mu}$.
The following lemma gives a useful characterisation for
the maps of the form $L_{\omega}$; it is closely related to
Theorem~2.4 in \cite{discrete}.

\begin{lem} \label{invar}
Let $\alg$ be a coamenable locally compact quantum group and
 let $T:\alg \to \alg$ be a completely bounded map such that $T\ot\id_{\alg}: \alg \ot \alg \to \alg \ot \alg$ is
strictly continuous on bounded subsets. Suppose that $G\sub \alg^*$ is weak$^*$-dense and invariant under the
right action of some dense subalgebra $\Alg$ of $\alg$. Then the following conditions are equivalent:
\begin{rlist}
\item $T=L_{\mu}$, for some $\mu \in \alg^*$;
\item \label{commutT}
      $(T \ot \id_{\alg} ) \Com = \Com  T$;
\item $T R_{\nu} = R_{\nu} T$ for all $\nu \in G$.
\end{rlist}
If the above conditions hold, $T= R_{\Cou \circ T}$ is a linear combination of completely positive nondegenerate
maps and the equality in (iii) is valid for all $\nu \in \alg^*$.
\end{lem}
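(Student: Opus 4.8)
The plan is to prove the equivalences cyclically, (i)$\Rightarrow$(ii)$\Rightarrow$(iii)$\Rightarrow$(i), and then derive the supplementary claims. The implication (i)$\Rightarrow$(ii) is essentially coassociativity: if $T = L_\mu = (\mu\ot\id)\Com$, then
\[
(L_\mu\ot\id)\Com = (\mu\ot\id\ot\id)(\Com\ot\id)\Com = (\mu\ot\id\ot\id)(\id\ot\Com)\Com = \Com L_\mu,
\]
where one must be a little careful to justify applying the slice map $\mu\ot\id\ot\id$ and $\Com$ in the right order on $M(\alg\ot\alg\ot\alg)$ — this is where strictness of $\mu$ and of the slice maps, recalled in the preliminaries, is used. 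The implication (ii)$\Rightarrow$(iii) is a computation with leg-numbering: from $(T\ot\id)\Com = \Com T$ one wants $T R_\nu = R_\nu T$ for $\nu\in G$, which should follow by slicing the coassociativity-type identity in the appropriate leg; concretely $R_\nu T = (\id\ot\nu)\Com T = (\id\ot\nu)(T\ot\id)\Com = (T\ot\id)(\id\ot\nu)\Com = T R_\nu$, using that $T$ acts on the first leg and $\nu$ on the second so the slices commute (again needing the strict-continuity hypothesis on $T\ot\id$ to make sense of $(\id\ot\nu)(T\ot\id)\Com$ applied to $\Com(\alg)(1\ot\alg)$, or rather its closed span which is all of $\alg\ot\alg$). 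Note (iii) does not yet use $G$ at all — the density of $G$ enters in the converse.

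**Next I would** prove (iii)$\Rightarrow$(i), which I expect to be the crux. The idea: set $\mu := \Cou\circ T$, where $\Cou$ is the bounded counit (available by coamenability), and show $T = L_\mu$. One computes, for $\nu\in G$ and $a\in\alg$, that
\[
\Cou(T R_\nu(a)) = \Cou(R_\nu T(a)) = \nu(T(a)),
\]
using $(\Cou\ot\id)\Com = \id$ so that $\Cou\circ R_\nu = \nu$; on the other hand $\Cou(T R_\nu(a)) = (\mu\star\nu)(a)$ if one can identify $\Cou\circ T\circ R_\nu$ with $\mu\star\nu$. Comparing, $\nu(T(a)) = (\mu\ot\nu)\Com(a) = \nu(L_\mu(a))$ for all $\nu\in G$; since $G$ is weak$^*$-dense, $T(a) = L_\mu(a)$. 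The delicate point — and the main obstacle — is that $R_\nu$ need not map $\alg$ into the domain where these manipulations are transparent, and that we are composing unbounded/partially-defined pieces; one wants to first establish the identities on the dense subalgebra $\Alg$ (on which $G$ is invariant under the right action, so $\nu_a\in G$ for $\nu\in G$, $a\in\Alg$) and then pass to the closure using complete boundedness of $T$. The invariance hypothesis on $G$ is exactly what lets one stay inside $G$ while sliding the variable, so it must be used precisely here.

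**Finally**, for the last sentence: once $T = L_\mu$ with $\mu = \Cou\circ T$ (automatically, since applying $\Cou\ot\id$ to both sides of (ii) gives $T = (\Cou\ot\id)\Com T = \Cou\circ T$ composed appropriately, forcing $\mu = \Cou\circ T$ by the injectivity remark $L_\mu = L_\nu\Rightarrow\mu=\nu$), the equality $T = R_{\Cou\circ T}$ follows from the quantum group identity $L_\mu = R_{\mu}$ when $\mu$ factors through the counit — more carefully, one checks $(\id\ot\mu)\Com = (\mu\ot\id)\Com$ holds after noting $\mu\circ L_\nu = \nu\star\mu$ and $\mu\circ R_\nu = \mu\star\nu$ agree because $\mu$ is multiplicative, which is the feature of counit-type functionals; alternatively cite the symmetry already used in deriving (iii)$\Rightarrow$(i). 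The equality in (iii) then extends from $G$ to all of $\alg^*$ since both sides, as maps in $\nu$, are weak$^*$-to-pointwise continuous (each $R_\nu$ depends weak$^*$-continuously on $\nu$ and $T$ is bounded) and agree on the weak$^*$-dense set $G$. That $T$ is a linear combination of completely positive nondegenerate maps follows by decomposing $\mu\in\alg^*$ into its four positive parts $\mu_j$ via the Jordan/polar decomposition of bounded functionals on a $C^*$-algebra; each $L_{\mu_j} = (\mu_j\ot\id)\Com$ is completely positive as a slice of the $^*$-homomorphism $\Com$ by a positive functional, and nondegenerate because $\Com$ is nondegenerate and the $\mu_j$ can be taken to be (positive multiples of) states, so $L_{\mu_j}(e_i)\to 1_\alg$ strictly along an approximate identity.
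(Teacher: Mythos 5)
Your proof of the three equivalences is correct, but the crux is handled by a genuinely different route from the paper's. The paper proves (iii)$\Rightarrow$(ii): for $\nu\in G$ and $a\in\Alg$ it uses the invariance hypothesis ($\nu_a\in G$) to obtain $(\id_{\alg}\ot\nu)\bigl((T\ot\id_{\alg})((1_\alg\ot a)\Com(x))\bigr)=(\id_{\alg}\ot\nu)\bigl((1_\alg\ot a)\Com(T(x))\bigr)$, then invokes the fact that the functionals $\mu\ot\nu$ with $\nu\in G$ separate points of $\alg\ot\alg$, density of $\Alg$ and strictness of $T\ot\id_{\alg}$ to get $(T\ot\id_{\alg})\Com=\Com T$, and only afterwards recovers $\mu$ by applying $\Cou\ot\id_{\alg}$ to (ii). You instead go (iii)$\Rightarrow$(i) directly: applying $\Cou$ to $TR_\nu=R_\nu T$ gives $\nu(T(a))=\mu(R_\nu(a))=(\mu\star\nu)(a)=\nu(L_\mu(a))$ with $\mu:=\Cou\circ T$, and weak$^*$-density of $G$ forces $T=L_\mu$. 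This works: all maps involved are bounded, $R_\nu$ maps $\alg$ into $\alg$ (as recalled before the lemma), and the slice identities hold on $M(\alg\ot\alg)$ by standard strictness arguments, so the ``delicate point'' you worry about is not actually an obstacle. Note, however, that your closing remark that the invariance of $G$ under $\Alg$ ``must be used precisely here'' is inaccurate for your own argument: you never use it, and in fact your route shows that for the equivalence only weak$^*$-density of $G$ is needed (the invariance is what drives the paper's proof of (iii)$\Rightarrow$(ii)). With (i) in hand, (ii) and then (iii) for arbitrary $\nu\in\alg^*$ follow as in the paper by applying slice maps to $(L_\mu\ot\id_{\alg})\Com=\Com L_\mu$; your alternative weak$^*$-to-weak continuity argument for extending (iii) to all of $\alg^*$ is also fine, as is the Jordan-decomposition argument for writing $T$ as a combination of completely positive nondegenerate maps.

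The one genuinely flawed step is your treatment of ``$T=R_{\Cou\circ T}$''. Your justification -- that $\mu\circ L_\nu$ and $\mu\circ R_\nu$ agree ``because $\mu$ is multiplicative, which is the feature of counit-type functionals'' -- does not hold: $\mu=\Cou\circ T$ is not multiplicative for a general $T$ satisfying (i)--(iii), and $L_\mu=R_\mu$ fails whenever $\alg$ is not cocommutative. The printed formula should be read as a misprint for $T=L_{\Cou\circ T}$, which is exactly what your own computation (or the paper's application of $\Cou\ot\id_{\alg}$ to (ii), together with $\Cou\circ L_\mu=\mu$) already establishes; no separate argument, and certainly no appeal to multiplicativity of $\mu$, is needed or available. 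Apart from this point the proposal is sound.
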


\begin{proof}
(i) $\Rightarrow$ (ii) Standard calculation.

(ii) $\Rightarrow$ (i) Application of $\Cou \ot \id_{\alg}$ to the  equality in (ii).

(ii) $\Rightarrow$ (iii) Fix $\nu \in \alg^*$ and apply $\id_{\alg}  \ot \nu$ to  the equality in (ii).

(iii) $\Rightarrow$ (ii) Let $\nu \in G$ and $a\in \Alg$. As $\nu_a \in G$ we deduce from (iii) that for all $x \in \alg$
\[
(\id_{\alg} \ot \nu) \left((T \ot \id_{\alg})
                          \left((1_\alg \ot a)\Com(x)\right) \right)
= (\id_{\alg} \ot \nu) \left((1_\alg \ot a)\Com(T(x))\right).
\]
As the family  $\set{\mu \ot \nu}{\mu \in \alg^*, \nu \in G}$ separates points in $\alg \ot \alg$, the last
formula is equivalent to
\[
(T \ot \id_{\alg}) \left((1_\alg \ot a) \Com(x)\right) = (1_\alg \ot a)\Com(T(x))
\]
holding for all $a \in \Alg$, $x\in \alg$. That this implies (ii) is a consequence of density of $\Alg$ in $\alg$
and the definition of the multiplier algebra.
\end{proof}

A \emph{compact quantum subgroup} of $\alg$ is
a compact quantum group $\blg$ with a surjective $^*$-homo\-mor\-phism
$\pi_\blg: \alg\to \blg$ such that
$(\pi_\blg\otimes\pi_\blg)\Com_\alg = \Com_\blg\pi_\blg$,
where $\Com_\alg$ and $\Com_\blg$ denote the coproducts of
$\alg$ and $\blg$, respectively.
Although the morphism $\pi_\blg$ is an essential part of the
definition, we shall usually suppress it
and say that $\blg$ is a compact quantum subgroup of $\alg$.

Woronowicz defined compact quantum group as a unital $C^*$-algebra
with a coproduct satisfying the quantum cancellation laws and showed that such a compact quantum group
has always a unique Haar state which, however,
does not need to be faithful.
Although following the framework of \cite{KV} we defined compact quantum group more restrictively,
our setting actually covers also the more general situation.
Indeed, suppose that we have a coamenable locally compact quantum
group $\alg$ which has a Woronowicz-type compact quantum subgroup
$\blg$. Then we can quotient out the kernel of the Haar state
of $\blg$ to obtain a compact quantum subgroup $\blg'$
of $\alg$ in our sense.
By Theorem~8 of \cite{S} a compact quantum subgroup of a
coamenable locally compact quantum group is always coamenable,
and it follows that $\blg = \blg'$.

\section{Definition and basic properties of idempotent states on
locally compact quantum groups.}

From now on we assume that $\alg$ is a coamenable locally compact quantum group. The following definition is
central to the paper.

\begin{deft}
A state $\omega \in \alg^*$ is called an \emph{idempotent state} if $\omega
\star \omega = \omega$.
\end{deft}

As stated in the introduction, idempotent states on locally compact quantum groups can arise from Haar states of compact quantum subgroups. This motivates the next definition.

\begin{deft}
We call a state $\omega \in \alg^*$ a \emph{Haar idempotent} if there exists a compact quantum subgroup $\blg$ of
$\alg$ such that $\omega=h_{\blg} \circ \pi_\blg$ where $h_\blg$ is the Haar state of $\blg$ and $\pi_\blg:\alg
\to \blg$ is the surjective morphism associated with the quantum subgroup $\blg$.
\end{deft}

Each Haar idempotent is an idempotent state; in the next section we will provide a characterisation of the Haar
property in terms of various objects associated with a given idempotent state.

It is easy to see (see the beginning of the proof of Theorem \ref{idempcondex} below) that $\omega \in \alg^*$ is
an idempotent state if and only if  $L_{\omega}$ (or, equivalently, $R_{\omega}$) is a unital positive idempotent
map. It is tempting to replace the last statement by saying that $L_{\omega}$ is a conditional expectation. To
show that it is indeed possible we need some preparations.

The next lemma was proved for compact quantum groups in \cite{FS} (Lemma 3.1 there). As can be seen from its proof in the expanded
arXiv version of that paper, apart from the Cauchy--Schwarz inequality for states the only other ingredient used
is that the linear spans of $\Delta(\alg) (\alg \ot 1_{\alg})$ and $\Delta(\alg) (1_{\alg} \ot \alg)$ are dense in
$\alg \ot \alg$. Since this remains true when $\alg$ is a locally compact quantum group, and all algebraic
manipulations can be justified via strictness arguments, we obtain the following result.

\begin{lem}\label{omegac}
Let $\sigma$ and $\omega$ be states on $\alg$ and assume that $\sigma
\star \omega = \omega \star \sigma = \omega$. Let $b\in \alg$. Then
\[
\omega \star {}_b\sigma = \sigma(b) \omega.
\]
In particular, if $\omega \in \alg^*$ is an idempotent state, then
$\omega \star {}_b\omega = \omega(b) \omega$.
\end{lem}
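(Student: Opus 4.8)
The plan is to follow the strategy of Lemma~3.1 of \cite{FS}, adapting the purely algebraic manipulations there to the locally compact setting by replacing computations in a dense Hopf $^*$-algebra with computations in $M(\alg\ot\alg)$ justified by strictness. The key algebraic identity behind the statement is coassociativity: for $b\in\alg$ we have $\omega\star{}_b\sigma = (\omega\ot{}_b\sigma)\Com = (\omega\ot\sigma\ot\id)(\id\ot\text{mult})(\Com\ot\id)\bigl((1\ot b)\Com(\cdot)\bigr)$ — more precisely, writing things out, $(\omega\star{}_b\sigma)(x) = (\omega\ot\sigma)\bigl((1_\alg\ot b)\Com(x)\bigr)$, so we must show $(\omega\ot\sigma)\bigl((1_\alg\ot b)\Com(x)\bigr) = \sigma(b)\,\omega(x)$ for all $x\in\alg$.

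First I would record the Cauchy--Schwarz (Kadison--Schwarz) inequality for the state $\sigma$: $|\sigma(b^*c)|^2 \le \sigma(b^*b)\sigma(c^*c)$, and note that both $\omega$ and $\sigma$, being bounded functionals, are strict, so they extend to $M(\alg)$ and slice maps such as $\id\ot\sigma$, $\omega\ot\id$ extend strictly to $M(\alg\ot\alg)$; thus all the expressions $(1_\alg\ot b)\Com(x)$, $\Com(x)(1_\alg\ot b)$ and their slices make sense and behave as expected. Next, using the hypothesis $\sigma\star\omega = \omega$, i.e.\ $(\sigma\ot\omega)\Com = \omega$, together with coassociativity, one gets for $x\in\alg$
\[
(\omega\ot\sigma)\bigl(\Com(x)(1_\alg\ot b)\bigr)
= (\omega\ot\sigma\ot\omega)\bigl((\Com\ot\id)\Com(x)\cdot(1_\alg\ot b\ot 1_\alg)\bigr),
\]
and similarly the other associativity bracketing rewrites the same triple-slice using $\omega\star\sigma = \omega$; comparing the two and invoking density of $\clsp\Com(\alg)(1_\alg\ot\alg)$ in $\alg\ot\alg$ (to move the $b$ past $\Com(x)$ freely in the relevant sense) one isolates the factor $\sigma(b)$. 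The cleanest route, exactly as in \cite{FS}, is: apply Cauchy--Schwarz to show that the sesquilinear form $(x,y)\mapsto (\omega\ot\sigma)\bigl(\Com(x)^*(1_\alg\ot b)\Com(y)\bigr) - \sigma(b)\,\omega(x^*y)$ vanishes, by bounding it between expressions that collapse to $0$ using $\sigma\star\omega = \omega\star\sigma = \omega$; then specialising $x$ to an approximate identity gives the claim.

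I expect the main obstacle to be purely technical: making the algebraic identities of the compact case rigorous without a dense Hopf $^*$-algebra. Concretely, one must be careful that $(1_\alg\ot b)\Com(x)$ lies in $\alg\ot\alg$ (it does, since $\Com(\alg)(1_\alg\ot\alg)\sub\alg\ot\alg$), that the various slice maps commute with the strict limits used when replacing $x$ by a bounded approximate identity, and that $(\Com\ot\id)\Com(x)$ may be approximated strictly by elements to which the finite-dimensional manipulations apply. These are exactly the ``strictness arguments'' the paragraph preceding the lemma alludes to; the combinatorial heart — the use of Cauchy--Schwarz together with the two invariance identities $\sigma\star\omega = \omega = \omega\star\sigma$ — is unchanged from \cite{FS}. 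The final sentence (``in particular'') is then immediate on taking $\sigma = \omega$, since an idempotent state satisfies $\omega\star\omega = \omega$ and hence trivially the hypotheses of the first part with $\sigma = \omega$.
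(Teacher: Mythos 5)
Your citation-level strategy --- transfer Lemma~3.1 of \cite{FS} to the locally compact setting using the Cauchy--Schwarz inequality, the density of the spans of $\Com(\alg)(\alg\ot 1_\alg)$ and $\Com(\alg)(1_\alg\ot\alg)$, and strictness --- is exactly what the paper itself does, and the paper records no more detail than that. But a blind proof has to carry the argument, and yours breaks down at the decisive point. A small slip first: with the paper's convention ${}_b\sigma(c)=\sigma(cb)$ one has $(\omega\star{}_b\sigma)(x)=(\omega\ot\sigma)\bigl(\Com(x)(1_\alg\ot b)\bigr)=\sigma\bigl(L_\omega(x)\,b\bigr)$, not $(\omega\ot\sigma)\bigl((1_\alg\ot b)\Com(x)\bigr)$ (the latter is $\omega\star\sigma_b$); this is cosmetic. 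More seriously, your displayed identity $(\omega\ot\sigma)\bigl(\Com(x)(1_\alg\ot b)\bigr)=(\omega\ot\sigma\ot\omega)\bigl((\Com\ot\id_\alg)\Com(x)\,(1_\alg\ot b\ot 1_\alg)\bigr)$ is not a consequence of $\sigma\star\omega=\omega$ and coassociativity: the right-hand side equals $\bigl((\omega\star{}_b\sigma)\star\omega\bigr)(x)$, and since $\omega$ is \emph{not} assumed idempotent in the lemma the identity is false in general --- take $\alg$ commutative, $\sigma=\Cou$ (so both hypotheses hold trivially), $\omega$ a non-idempotent probability measure and $b$ with $\sigma(b)=1$; then the left side is $\omega(x)$ while the right side is $(\omega\star\omega)(x)$.

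Second, your ``cleanest route'' never identifies the positive quantity to which Cauchy--Schwarz is applied; ``bounding it between expressions that collapse to $0$'' is precisely where the content of the lemma sits, and the obvious candidates fail. The statement is equivalent to $\sigma\bigl((L_\omega(x)-\omega(x)1_\alg)\,b\bigr)=0$ for all $b$, which by Cauchy--Schwarz would follow from $\sigma(cc^*)=0$ for $c=L_\omega(x)-\omega(x)1_\alg$; but expanding $\sigma(cc^*)$ and using $\omega\star\sigma=\omega$ (i.e.\ $\sigma\circ L_\omega=\omega$) only reduces this to the nontrivial identity $\sigma\bigl(L_\omega(x)L_\omega(x)^*\bigr)=|\omega(x)|^2$, in which the second hypothesis $\sigma\star\omega=\omega$ and the cancellation-law densities must genuinely be used. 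Likewise the naive choice does not work: $(\omega\ot\sigma)\bigl((\Com(x)-x\ot 1_\alg)(\Com(x)-x\ot 1_\alg)^*\bigr)$ is not zero in general (classically it is $\int\int |x(st)-x(s)|^2\, d\omega(s)\, d\sigma(t)$), so your bilinear-form claim cannot be verified by such a direct estimate, and you give no other mechanism. In effect the proof is outsourced to \cite{FS} while your own reconstruction of that argument is incorrect at its core; only the final reduction of the ``in particular'' statement (taking $\sigma=\omega$) and the general strictness remarks are sound.
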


\begin{tw} \label{idempcondex}
A functional $\omega \in \alg^*$ is an idempotent state if and only if
$L_{\omega}$ \textup{(}or, equivalently,
$R_{\omega}$\textup{)} is a conditional expectation.
\end{tw}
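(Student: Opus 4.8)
The plan is to prove both implications directly, using Lemma~\ref{omegac} for the harder direction. First I would record the easy observation that $\omega$ is an idempotent state if and only if $L_\omega$ is a unital positive idempotent map: indeed $L_\omega$ is automatically unital (since $\Com$ is unital and $\omega$ is a state, $L_\omega(1_\alg) = (\omega\ot\id)\Com(1_\alg) = \omega(1_\alg)1_\alg = 1_\alg$) and positive (as a slice of a positive map by a state), and the computation $L_\omega L_\omega = L_{\omega\star\omega}$ together with injectivity of $\mu\mapsto L_\mu$ shows that $L_\omega^2 = L_\omega$ is equivalent to $\omega\star\omega = \omega$. This gives the ``if'' direction for free, since a conditional expectation is in particular a unital positive idempotent.

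For the ``only if'' direction, assume $\omega\star\omega=\omega$. By the above $L_\omega$ is a unital positive idempotent with range some closed subspace $\clg := L_\omega(\alg)$; by Tomiyama's theorem it suffices to show $\clg$ is a $C^*$-subalgebra and $L_\omega$ is a projection of norm one onto it (equivalently, that $L_\omega$ satisfies the bimodule property over $\clg$). The key computational input is Lemma~\ref{omegac}: with $\sigma = \omega$ it yields $\omega\star{}_b\omega = \omega(b)\,\omega$ for every $b\in\alg$. Unravelling this, for $a\in\alg$ one gets
\[
(\id\ot\omega)\bigl((1_\alg\ot b)\Com(L_\omega(a))\bigr)
= \omega(b)\,L_\omega(a),
\]
and by applying slices and using that $\Com(\alg)(1_\alg\ot\alg)$ is dense in $\alg\ot\alg$, one extracts the identity $L_\omega(L_\omega(a)\,c) = L_\omega(a)\,L_\omega(c)$ for all $a,c\in\alg$ (and symmetrically on the other side). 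This is precisely the statement that $L_\omega$ is idempotent with its range a subalgebra on which it restricts to the identity, hence a conditional expectation.

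The main obstacle I anticipate is the bookkeeping in deriving the bimodule/multiplicativity identity from Lemma~\ref{omegac}: one must carefully manipulate expressions of the form $(\id\ot\omega)((1_\alg\ot b)\Com(x))$, justify passing multipliers through the slice maps (using strictness of $\id\ot\omega$ and of the relevant completely positive maps, as discussed in the preliminaries), and invoke the density of $\clsp\Com(\alg)(1_\alg\ot\alg)$ in $\alg\ot\alg$ to conclude. Once one has $L_\omega(xc) = L_\omega(x)L_\omega(c)$ when $x\in\ran L_\omega$, an appeal to the characterisation of conditional expectations as norm-one projections (Tomiyama), or directly to the fact that a unital positive projection with the bimodule property over its range is a conditional expectation, finishes the argument; the equivalence with the statement for $R_\omega$ follows by the symmetric version of Lemma~\ref{omegac} and the analogous properties of $R_\omega$.
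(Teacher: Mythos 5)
Your overall strategy for the hard direction --- use Lemma~\ref{omegac} to derive the multiplicativity identity $L_\omega(L_\omega(a)c)=L_\omega(a)L_\omega(c)$ and conclude that the range of the completely positive idempotent $L_\omega$ is a $C^*$-subalgebra, so that $L_\omega$ is a conditional expectation --- is exactly the paper's route, but the intermediate identity you display is false, so the extraction step fails as written. The formula $(\id_\alg\ot\omega)\bigl((1_\alg\ot b)\Com(L_\omega(a))\bigr)=\omega(b)\,L_\omega(a)$ is the statement $R_{\omega_b}(L_\omega(a))=\omega(b)L_\omega(a)$, i.e.\ that every element of $L_\omega(\alg)$ is right invariant under $\omega$. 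Already for $\alg=C(G)$ with $G$ a finite group and $\omega$ the Haar idempotent of a non-normal subgroup $H$, the range of $L_\omega$ consists of the functions constant on the cosets $Hg$, which are in general not constant on the cosets $gH$; taking $b$ with $\omega(b)=1$ the identity fails. Lemma~\ref{omegac} cannot yield it, because the two copies of $\omega$ must occupy adjacent legs for the convolution $\omega\star{}_b\omega$ to form, whereas in your formula the free (element-valued) leg sits between them. The identity actually needed, and proved in the paper, is
\[
(\omega\ot\omega\ot\id_\alg)\bigl(\Com_2(a)(1_\alg\ot x)\bigr)=L_\omega(a)\,(\omega\ot\id_\alg)(x),\qquad x\in\alg\ot\alg,
\]
where $\Com_2=(\Com\ot\id_\alg)\Com$ and the free leg is the last one: one verifies it for $x=c\ot d$ using $\omega\star{}_c\omega=\omega(c)\omega$, extends by norm density and strictness to $x=\Com(b)$, and then computes $L_\omega(L_\omega(a)b)=(\omega\ot\omega\ot\id_\alg)\bigl(\Com_2(a)(1_\alg\ot\Com(b))\bigr)=L_\omega(a)L_\omega(b)$. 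So the fix is not mere bookkeeping; your sliced legs are on the wrong side of the free leg and the three-legged version above is required.

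A smaller point concerns the ``if'' direction: there $\omega\in\alg^*$ is only assumed to be a functional with $L_\omega$ a conditional expectation, so you may not use that $\omega$ is a state when asserting that $L_\omega$ is unital and positive; the equivalence you set up in your first paragraph establishes only the implication from the state property to those properties of $L_\omega$. One recovers the state property as the paper does: $\omega=\Cou\circ L_\omega$ is positive because $L_\omega$ is completely positive (this is where coamenability supplies the counit), the relation $L_\omega L_\omega=L_{\omega\star\omega}$ and injectivity of $\mu\mapsto L_\mu$ give $\omega\star\omega=\omega$, and passing to strict extensions yields $\omega(1_\alg)=\omega(1_\alg)^2$ with $\omega\neq 0$, whence $\omega(1_\alg)=1$.
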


\begin{proof}
Assume first that  $L_{\omega}$ is a conditional expectation. Then it
is automatically completely positive, so that
$\omega=\Cou \circ L_{\omega}$ is a contractive positive functional.
As $L_{\omega} = L_{\omega} L_{\omega} = L_{\omega\star  \omega}$,
we obtain that $\omega = \omega \star \omega$. The strict extension of
this formula to $M(\alg)$ shows that
$\omega(1_{\alg}) = \omega(1_{\alg})^2$. As $L_{\omega}$, so
also $\omega$, cannot be equal to $0$, we must have
$\omega(1_{\alg}) =1$ and $\omega$ is an idempotent state.

Assume now that $\omega$ is an idempotent state. Then $L_{\omega}:\alg \to \alg$ is a completely positive nondegenerate
idempotent map. Complete positivity follows from the fact that both slice maps with positive functionals and $^*$-homomorphisms are completely positive (see for example \cite[p.4]{Was2}); nondegeneracy is a consequence of the fact that $\omega$ is a state. To show that $L_{\omega}$ is a conditional expectation it is enough to check that its image is an
algebra. To that end it suffices to show that for all $a,b \in \alg$
\begin{equation} \label{cond}
L_{\omega}(a) L_{\omega}(b) =  L_{\omega} (L_{\omega}(a)b).
\end{equation}

Let $a, b, c, d \in \alg$. Write $\Com_{2}: \alg \to M(\alg \ot \alg \ot \alg)$ for $(\Com \ot \id_{\alg})\Com =
(\id_{\alg} \ot \Com) \Com$ and compute:
\begin{align*}
(\omega \ot \omega \ot \id_{\alg})&  \big(\Com_2(a) (1_\alg \ot c \ot d)\big)
 =  (\omega \ot {}_c\omega \ot \id_{\alg}) \big(\Com_2(a) (1_\alg \ot 1_\alg \ot d)\big)\\
&= \big((\omega \star {}_c\omega) \ot \id_{\alg}\big) \big(\Com(a) (1_\alg\ot d)\big)
 = \omega(c) (\omega \ot \id_{\alg}) \big(\Com(a) (1_\alg  \ot d)\big)\\
&= L_{\omega}(a) (\omega \ot \id_{\alg})(c \ot d)
\end{align*}
(note that Lemma~\ref{omegac} was used in the third equality). A simple norm limit argument shows that if $x \in
\alg \ot \alg$, then
\begin{equation}\label{AotA}
(\omega \ot \omega \ot \id_{\alg}) \big(\Com_2(a) (1_\alg \ot x)\big)
 = L_{\omega}(a) (\omega \ot \id_{\alg})(x).
\end{equation}
Further, using the fact that $\Com(\alg) (\alg \ot 1_\alg)$ is contained in $\alg \ot \alg$ and all maps in the formula
above are strict we deduce that for each $y \in \Com(\alg)$
\[
(\omega \ot \omega \ot \id_{\alg}) \big(\Com_2(a) (1_\alg \ot y)\big)
 = L_{\omega}(a) (\omega \ot \id_{\alg})(y)
\]
(to prove it formally we apply identity \eqref{AotA} to $y(e_{i} \ot 1_\alg)$ for an approximate identity $(e_i)_{i \in I}$ and pass to
the limit). In particular,
\begin{align*}
L_{\omega}(L_{\omega}(a) b)
&= L_{\omega} \big((\omega \ot \id_{\alg}) \big(\Com(a)(1_\alg \ot b) \big) \big) \\
&= (\omega \ot L_{\omega}) \big(\Com(a)(1_\alg \ot b) \big)
 = (\omega \ot \omega \ot \id_{\alg}) \big(\Com_2(a) (1_\alg \ot \Com(b))\big)\\
&= L_{\omega}(a) (\omega \ot \id_{\alg})(\Com(b))= L_{\omega}(a) L_{\omega}(b).
\end{align*}

The arguments above can be easily adapted to work with the map $R_{\omega}$.

\end{proof}

Lemma \ref{omegac} has other useful consequences. Recall that the
\emph{multiplicative domain} of a state $\omega$ of $\alg$ is the set
\[
\set{x\in \alg}{\omega(ax)= \omega(xa) = \omega(a)\omega(x)
                \text{ for every }a\in\alg}.
\]

\begin{lem}\label{mult-dom}
Let $\omega$ be an idempotent state on $\alg$.
Then $L_\omega(\alg)$ is contained in the
multiplicative domain of $\omega$.
\end{lem}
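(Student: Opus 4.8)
The plan is to show that every element of the form $L_\omega(a)$ lies in the multiplicative domain of $\omega$, i.e.\ that $\omega(b\,L_\omega(a)) = \omega(b)\,\omega(L_\omega(a))$ and $\omega(L_\omega(a)\,b) = \omega(L_\omega(a))\,\omega(b)$ for all $a,b\in\alg$. First I would record the simplifications coming from idempotency: since $\omega\star\omega=\omega$ we have $\Cou\circ L_\omega = \omega$, and from Theorem~\ref{idempcondex} the map $L_\omega$ is a conditional expectation, so $\omega\circ L_\omega = \Cou\circ L_\omega\circ L_\omega = \Cou\circ L_\omega = \omega$. Thus $\omega(L_\omega(a)) = \omega(a)$, and the two identities to be proved become $\omega(b\,L_\omega(a)) = \omega(b)\,\omega(a)$ and $\omega(L_\omega(a)\,b) = \omega(a)\,\omega(b)$.

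Next I would unwind the left-hand sides using the definition $L_\omega(a) = (\omega\ot\id_\alg)\Com(a)$ together with the module property of conditional expectations and Lemma~\ref{omegac}. For the first identity, write
\[
\omega\bigl(b\,L_\omega(a)\bigr) = \omega\bigl(b\,(\omega\ot\id_\alg)\Com(a)\bigr)
= (\omega\ot\omega)\bigl((1_\alg\ot b)\Com(a)\bigr)
= \bigl(\omega\star {}_b\omega\bigr)(a),
\]
where the middle step uses that $\omega\ot\id_\alg$ is strict and that multiplying the second leg by $b$ inside the slice corresponds to multiplying by $1_\alg\ot b$ before slicing. By Lemma~\ref{omegac} applied with $\sigma=\omega$ (legitimate since $\omega$ is idempotent), $\omega\star {}_b\omega = \omega(b)\,\omega$, hence $\omega(b\,L_\omega(a)) = \omega(b)\,\omega(a)$ as wanted. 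The identity $\omega(L_\omega(a)\,b) = \omega(a)\,\omega(b)$ should follow by the symmetric computation $\omega(L_\omega(a)\,b) = (\omega\ot\omega)((1_\alg\ot b)\cdot(\text{something}))$; concretely, $\omega(L_\omega(a)b) = (\omega\ot\omega)(\Com(a)(1_\alg\ot b))$, which rearranges to $(\omega\star\omega_b)(a)$, and one needs the companion statement $\omega\star\omega_b = \omega(b)\,\omega$, which again is Lemma~\ref{omegac} (or its obvious variant obtained by passing to the opposite multiplication / using $\sigma\star\omega=\omega$ with the roles of the two factors adjusted).

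The only genuine subtlety — and the step I expect to be the main obstacle — is the bookkeeping of strictness: the elements $\Com(a)$ and $(1_\alg\ot b)\Com(a)$ live a priori in $M(\alg\ot\alg)$ rather than $\alg\ot\alg$, so applying the slice map $\omega\ot\id_\alg$ and then $\omega$, or directly $\omega\ot\omega$, has to be justified via the strict extensions discussed in the preliminaries (bounded functionals and their slices are strict, and $(1_\alg\ot b)\Com(a)\in\alg\ot\alg$ actually, by the quantum cancellation laws, so in fact everything can be kept inside $\alg\ot\alg$). Once one checks that $\omega(b\,L_\omega(a))$ really equals $(\omega\ot\omega)((1_\alg\ot b)\Com(a))$ — which is just associativity of the relevant slice maps on $\alg\ot\alg$ — the conclusion is immediate from Lemma~\ref{omegac}. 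I would therefore present the argument as: (1) reduce to the two scalar identities using $\omega\circ L_\omega=\omega$; (2) rewrite each side as a convolution $(\omega\star {}_b\omega)(a)$ or $(\omega\star\omega_b)(a)$, being careful about strictness; (3) invoke Lemma~\ref{omegac}.
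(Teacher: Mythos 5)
Your overall strategy -- verify the two defining identities of the multiplicative domain directly, by rewriting $\omega(b\,L_\omega(a))$ and $\omega(L_\omega(a)\,b)$ as convolutions and appealing to Lemma~\ref{omegac} -- is viable, but as written one step does not go through. First, your labels are swapped relative to the paper's convention ($\mu_b(c)=\mu(bc)$, ${}_b\mu(c)=\mu(cb)$): with that convention $\omega(L_\omega(a)b)=(\omega\ot\omega)\bigl(\Com(a)(1_\alg\ot b)\bigr)=(\omega\star{}_b\omega)(a)$, and this is the case covered verbatim by Lemma~\ref{omegac}; whereas $\omega(b\,L_\omega(a))=(\omega\ot\omega)\bigl((1_\alg\ot b)\Com(a)\bigr)=(\omega\star\omega_b)(a)$, and the identity $\omega\star\omega_b=\omega(b)\omega$ is \emph{not} what Lemma~\ref{omegac} states. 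Your two proposed justifications for this companion identity are inadequate as they stand: ``adjusting the roles of the two factors'' in Lemma~\ref{omegac} produces a statement about ${}_b\omega\star\omega$, not about $\omega\star\omega_b$, and passing to the opposite multiplication requires knowing that $(\alg^{op},\Com)$ is again a coamenable locally compact quantum group to which the lemma applies -- true, but not free. The gap is easily closed: since $\omega\ot\omega$ is a positive (hence hermitian) functional and $\bigl((1_\alg\ot b)\Com(a)\bigr)^*=\Com(a^*)(1_\alg\ot b^*)$, one gets
\[
(\omega\ot\omega)\bigl((1_\alg\ot b)\Com(a)\bigr)
=\overline{(\omega\star{}_{b^*}\omega)(a^*)}
=\overline{\omega(b^*)\,\omega(a^*)}=\omega(b)\,\omega(a),
\]
by Lemma~\ref{omegac} applied to $a^*$ and $b^*$. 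With this repair (and the observation $\omega\circ L_\omega=\omega\star\omega=\omega$, which needs no appeal to Theorem~\ref{idempcondex}), your argument is complete; the strictness bookkeeping is indeed harmless since $\Com(\alg)(1_\alg\ot\alg)$ and $(1_\alg\ot\alg)\Com(\alg)$ lie in $\alg\ot\alg$.

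This is a genuinely different route from the paper's. The paper does not test multiplicativity against arbitrary $b$: it verifies only the two Schwarz-equality conditions $\omega\bigl(L_\omega(a)^*L_\omega(a)\bigr)=\omega\bigl(L_\omega(a)^*\bigr)\omega\bigl(L_\omega(a)\bigr)$ and its adjoint counterpart, using the conditional-expectation property $L_\omega(a)^*L_\omega(a)=L_\omega\bigl(L_\omega(a^*)a\bigr)$ together with the single stated case of Lemma~\ref{omegac}, and then invokes Choi's theorem for the strict extension $\tilde\omega$ on $M(\alg)$. The paper's proof is shorter and needs only the lemma as stated (at the cost of citing Choi and passing to $M(\alg)$); your direct verification avoids Choi's theorem and stays inside $\alg$, but needs the adjoint trick above to obtain the second one-sided decoupling identity.
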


\begin{proof}
Let $a\in\alg$. Since $L_\omega$ is a conditional expectation,
\[
\omega\bigl(L_\omega(a)^* L_\omega(a)\bigr)
= \omega\bigl(L_\omega(L_\omega(a^*)a)\bigr)
=\omega(L_\omega(a^*)a) = \omega\star {}_a\omega(a^*).
\]
By Lemma~\ref{omegac},
\[
\omega \star {}_a\omega(a^*) = \omega(a)\omega(a^*)
=\omega\bigl(L_\omega(a)^*\bigr)\omega\bigl(L_\omega(a)\bigr).
\]
Similarly,
\[
\omega\bigl(L_\omega(a) L_\omega(a)^*\bigr)
=\omega\bigl(L_\omega(a)\bigr)\omega\bigl(L_\omega(a)^*\bigr),
\]
so Choi's theorem \cite[Theorem~3.18]{paulsen}, applied to the strict extension of $\omega$ to $\tilde{\omega}\in M(\alg)^*$ implies that
$L_\omega(a)$ is in the multiplicative domain of $\tilde{\omega}$, so also in the multiplicative domain of $\omega$.
\end{proof}

In Proposition 3.4 of \cite{FST} it is shown that idempotent states on
compact quantum groups are in a sense
invariant under the antipode. The next proposition shows that the same
remains true in the locally compact context. The proof is different
from that in \cite{FST}.

\begin{propn} \label{w=wS}
Every idempotent state $\omega$ satisfies $\omega S(a) = \omega(a)$
for every $a$ in $\dom (S)$.
\end{propn}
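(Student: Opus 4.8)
The plan is to exploit the core of the antipode $S$ given by the elements $(\sigma\otimes\id)V$ with $\sigma\in B(H)_*$, on which $S$ acts by $S((\sigma\otimes\id)V) = (\sigma\otimes\id)V^*$. Since $\omega\circ S$ and $\omega$ are both norm-bounded (the first because $S$ is closed and $\omega$ bounded, so $\omega\circ S$ is closable; more precisely, because $\omega$ is a state the functional $\omega\circ S$ extends continuously once we verify agreement on a core), it suffices to prove $\omega\bigl((\sigma\otimes\id)V^*\bigr) = \omega\bigl((\sigma\otimes\id)V\bigr)$ for all $\sigma\in B(H)_*$. First I would rewrite both sides using the coproduct: applying $\sigma\otimes\id$ slice maps and the fact that $\Com(a) = V(a\otimes 1)V^*$, one can express $(\id\otimes\omega)\Com$, i.e.\ $R_\omega$, in terms of $V$ and $\omega$, and similarly slice maps of $V$ and $V^*$ relate to $R_\omega$, $L_\omega$ via the slice $(\sigma\otimes\id)V$.

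The key idea is Lemma~\ref{mult-dom}: $L_\omega(\alg)$ lies in the multiplicative domain of $\omega$, equivalently $\omega$ restricted to $L_\omega(\alg)$ is multiplicative. Combined with idempotency of $L_\omega$, this should let me identify $\omega\circ S$ with $\omega$ on the core. Concretely, I expect the argument to run as follows: for $a$ in the core, write $\omega(S(a))$, use the intertwining $(L_\omega\otimes\id)\Com = \Com L_\omega$ type relations from Lemma~\ref{invar}, and the relation $S\circ R_\nu = R_{\nu\circ S}$ or the analogous $R_\nu\circ S \subset S\circ R_\nu$ which follows from $\Com S = (S\otimes S)\Com^{\mathrm{op}}$ (the standard anti-comultiplicativity of the antipode). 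Then applying $\omega$ and using that $\omega$ is idempotent (so that the ``doubled'' $\omega\star\omega = \omega$) collapses the two expressions. A clean route: since $\omega S((\sigma\otimes\id)V) = \sigma((\id\otimes\omega)V^*) $ and $\omega((\sigma\otimes\id)V) = \sigma((\id\otimes\omega)V)$, it suffices to show $(\id\otimes\omega)(V) = (\id\otimes\omega)(V^*)$ in $M(K(H))$. Now $(\id\otimes\omega)(V)$ is the convolution operator dual object, and idempotency of $\omega$ should force this element to be a self-adjoint idempotent (a projection): indeed $(\id\otimes\omega)(V)\,(\id\otimes\omega)(V) = (\id\otimes\omega\star\omega)(V) = (\id\otimes\omega)(V)$ by the pentagon relation, and positivity/contractivity of $\omega$ forces self-adjointness of an idempotent contraction, whence $(\id\otimes\omega)(V) = (\id\otimes\omega)(V)^* = (\id\otimes\omega)(V^*)$.

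The main obstacle I anticipate is the functional-analytic bookkeeping: $V\in M(K(H)\otimes\alg)$ is a multiplier, $\omega$ must be applied via its strict extension, and $(\id\otimes\omega)(V)$ is in general an unbounded-friendly multiplier of $K(H)$, i.e.\ in $B(H)$, so the manipulation ``$P^2 = P$, $P$ contractive $\Rightarrow$ $P = P^*$'' needs the genuine operator statement that a norm-one idempotent on Hilbert space is an orthogonal projection — which is true and standard, but one must be careful that $\|(\id\otimes\omega)(V)\|\le 1$, which follows since $\|V\|=1$ and $\omega$ is a state. The other delicate point is justifying that equality of $\omega\circ S$ and $\omega$ on the core $\{(\sigma\otimes\id)V\}$ propagates to all of $\dom(S)$: here I would invoke that $S$ is closed and that this set is a \emph{core} for $S$, so any bounded functional agreeing with $\omega$ after composing with $S$ on the core, and with $\omega$ itself being continuous, extends by the closed-graph/core argument to all of $\dom(S)$. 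I would present this last step as a short lemma-free remark. Overall the proposal is: (1) reduce to the core via the core property of $S$; (2) reduce to showing $(\id\otimes\omega)(V)=(\id\otimes\omega)(V^*)$; (3) use the pentagon relation plus idempotency to see $(\id\otimes\omega)(V)$ is a norm-one idempotent, hence a projection, hence self-adjoint, which is exactly what is needed since $(\id\otimes\omega)(V^*)=(\id\otimes\omega)(V)^*$.
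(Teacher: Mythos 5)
Your proposal is correct and shares the paper's overall strategy: both arguments reduce the statement to showing that $p:=(\id\ot\omega)V$ is an orthogonal projection, then conclude $\omega((\sigma\ot\id)V)=\sigma(p)=\sigma(p^*)=\omega\bigl((\sigma\ot\id)V^*\bigr)=\omega S((\sigma\ot\id)V)$ for all $\sigma\in B(H)_*$ and extend to all of $\dom(S)$ using that $\{(\sigma\ot\id)V:\sigma\in B(H)_*\}$ is a core of the closed operator $S$ and $\omega$ is continuous. Where you genuinely differ is in how the projection property of $p$ is obtained. The paper proves $pp^*=p$ by extending the conditional expectation $\id\ot L_\omega$ to $M(K(H)\ot\alg)$, which requires Theorem~\ref{idempcondex} (that $L_\omega$ is a conditional expectation), the nondegeneracy of $L_\omega(\alg)$ in $\alg$ (quantum cancellation plus Cohen factorisation), and the multiplier-module Lemma~\ref{multmod}. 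You instead use $(\id\ot\Com)V=V_{12}V_{13}$ (a consequence of the pentagon relation) together with $\omega\star\omega=\omega$ to get $p^2=(\id\ot(\omega\star\omega))V=p$, and then the standard Hilbert-space fact that an idempotent of norm at most one is self-adjoint; since $\omega$ is hermitian, $p^*=(\id\ot\omega)V^*$, which is exactly what is needed. Your route is more elementary, bypassing the conditional-expectation machinery entirely at the cost only of the routine strictness/Fubini bookkeeping for slice maps that you correctly flag; the paper's route reuses machinery it has already built and, along the way, records the nondegeneracy of $L_\omega(\alg)$, which it needs again later (e.g.\ in Theorem~\ref{Soltanimproved}). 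Two cosmetic blemishes: the opening remark about boundedness/closability of $\omega\circ S$ is unnecessary (the core-plus-continuity limit argument you give at the end suffices on its own), and the digression through Lemma~\ref{mult-dom} and anti-comultiplicativity of $S$ plays no role in your ``clean route'' and could be dropped.
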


\begin{proof}
By Theorem~\ref{idempcondex}, $L_\omega$ is a conditional
expectation onto $\clg:= L_\omega(\alg)$.
It follows that $\id\ot L_\omega: K(H)\ot \alg \to K(H)\ot\clg$
is also a conditional expectation. Next we check that
$\clg$ is nondegenerate in $\alg$: we have
\[
\clg \alg = L_{\omega} (\alg)\alg =
(\omega \ot \id_{\alg}) \big(\Com(\alg) (1_\alg \ot \alg) \big),
\]
so the quantum cancellation law $\clsp \Com(\alg) (1_\alg \ot \alg) = \alg\ot\alg$
implies that the linear span of $\clg\alg$ is dense in $\alg$.
Then Cohen's factorisation theorem implies that
$\clg\alg$ is dense in $\alg$. Since $\clg$ is nondegenerate in $\alg$,
it follows that $K(H)\ot\clg$ is nondegenerate in $K(H)\ot\alg$.
Hence the conditional expectation $\id\otimes L_\omega$
extends to a conditional expectation on $M(K(H)\ot\alg)$
by Lemma~\ref{multmod}.

Put $p = (\id\ot\omega)V$. (It turns out that $p$ is an orthogonal
projection, hence the notation.) Now
\[
(\id\ot L_\omega)V = (\id\ot\omega\ot\id)V_{12}V_{13}
=(p\ot 1)V.
\]
On the other hand, $p^* = (\id\ot\omega)V^*$ and
\[
(\id\ot L_\omega)V^* = V^*(p^*\ot 1).
\]

Then
\begin{align*}
p p^*\ot 1_{\alg} &= (p\ot 1_{\alg})VV^*(p^*\ot 1_{\alg})
= \bigl((\id\ot L_\omega)V\bigr)\bigl((\id\ot L_\omega)V^* \bigr)\\
&= (\id\ot L_\omega)\bigl(((\id\ot L_\omega)V )V^*\bigr)
\end{align*}
because $\id\ot L_\omega$ is a conditional expectation on
$M(K(H)\ot \alg)$. Continue the calculation:
\[
p p^*\ot 1 = (\id\ot L_\omega)\bigl((p\ot 1)VV^*\bigr)
=(\id\ot L_\omega)\bigl(p\ot 1 \bigr)
= p\ot 1.
\]
Hence $pp^*=p$ and $p$ is an orthogonal projection.

Now for every $\sigma$ in $B(H)_*$,
\[
\omega((\sigma\ot\id)V) = \sigma(p)
=\sigma(p^*) = \omega ((\sigma\ot\id)V^*) = \omega S((\sigma\ot\id)V).
\]
Since $\set{(\sigma\ot\id)V}{\sigma\in B(H)_*}$ is a core of $S$,
we have $\omega(a) = \omega S(a)$ for every $a$ in $\dom (S)$.
\end{proof}

Classically the subgroups of a given group form a lattice. The
resulting partial order motivated
a natural partial order $\prec$ on idempotent states on a finite
quantum group introduced and studied in \cite{FSJAlg}. Below we show
that an analogous partial ordering can be considered also for
idempotent states on a locally compact quantum group.

\begin{deft}
If $\omega, \omega'\in\alg^*$ are idempotent states, we write
$\omega\prec\omega'$ if $\omega \star \omega' =\omega'$
\end{deft}

To prove that $\prec$ is antisymmetric, we need to exploit the properties
of idempotent states established above.

\begin{propn} \label{antisymm}
Suppose that $\omega$ and $\omega'$ are idempotent states such that
$\omega\prec\omega'$, i.e. $\omega \star\omega' = \omega'$.
Then also $\omega'\star \omega = \omega'$.
Moreover, if we have also $\omega'\prec \omega$, then $\omega = \omega'$.
\end{propn}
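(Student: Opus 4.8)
The plan is to exploit the characterisation of idempotent states via conditional expectations (Theorem~\ref{idempcondex}) together with Lemma~\ref{omegac}. The hypothesis $\omega\star\omega' = \omega'$ should first be recast at the level of convolution operators: applying the functoriality $L_{\mu}L_{\nu} = L_{\nu\star\mu}$ gives $L_{\omega'}L_{\omega} = L_{\omega'}$, so the range of $L_{\omega'}$ is contained in the range of $L_{\omega}$ (apply $L_{\omega}$ to $L_{\omega'}(a)$ and use that $L_{\omega'}(a)$ equals $L_{\omega'}$ of something). Conversely I want to land on $L_{\omega}L_{\omega'} = L_{\omega'}$, which is precisely the statement $\omega'\star\omega = \omega'$ once one uses injectivity of $\mu\mapsto L_{\mu}$.

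The key step will be to show that $L_{\omega'}(\alg)$ is stable under $L_{\omega}$ with $L_{\omega}$ acting as the identity there. For this I would use that $L_{\omega}$ is a conditional expectation onto $\clg:=L_{\omega}(\alg)$ and that, by Lemma~\ref{mult-dom}, $\clg$ lies in the multiplicative domain of $\omega$; combined with $L_{\omega'}L_{\omega} = L_{\omega'}$ this should force $L_{\omega'}(\alg)\subseteq\clg$. Then for $a\in\alg$ one computes $L_{\omega}(L_{\omega'}(a))$: since $L_{\omega}$ is a norm-one projection onto $\clg$ and $L_{\omega'}(a)\in\clg$, this is simply $L_{\omega'}(a)$. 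Hence $L_{\omega}L_{\omega'} = L_{\omega'} = L_{\omega'\star(\,\cdot\,)}$ evaluated appropriately, and by the identity $L_{\mu}L_{\nu} = L_{\nu\star\mu}$ we get $L_{\omega\star\omega'}=L_{\omega'}$, whence $\omega\star\omega' = \omega'$ — wait, that is the hypothesis; the point is rather that $L_{\omega}L_{\omega'}=L_{\omega'}$ reads $L_{\omega'\star\omega}=L_{\omega'}$, giving $\omega'\star\omega=\omega'$ by injectivity. Alternatively, and perhaps more cleanly, one can argue directly with the functionals: from $\omega\star\omega'=\omega'$ and idempotence of $\omega$ one has $\omega\star\omega'\star\omega = \omega'\star\omega$; the left side equals $\omega\star(\omega'\star\omega)$, and if I can show $\omega'\star\omega$ is idempotent and satisfies $\omega\star(\omega'\star\omega) = \omega'\star\omega$ as well as $(\omega'\star\omega)\star\omega' = \omega'$, a short computation using Lemma~\ref{omegac} (with the roles of $\sigma,\omega$ played suitably) pins down $\omega'\star\omega = \omega'$.

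For the second assertion, once symmetry of each convolution relation is established, the hypotheses $\omega\prec\omega'$ and $\omega'\prec\omega$ give $\omega\star\omega'=\omega'$ and $\omega'\star\omega=\omega$ (using the first part to flip one of them); but the first part applied to $\omega'\prec\omega$ also yields $\omega\star\omega'=\omega$. Comparing, $\omega=\omega\star\omega'=\omega'$, and we are done.

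The main obstacle I anticipate is the passage $L_{\omega'}L_{\omega}=L_{\omega'}\ \Rightarrow\ L_{\omega'}(\alg)\subseteq L_{\omega}(\alg)$ with $L_{\omega}$ restricting to the identity there: naively $L_{\omega'}L_{\omega}=L_{\omega'}$ only says $L_{\omega}$ is a right inverse of $L_{\omega'}$ on $\alg$, not that $L_{\omega}$ fixes the range of $L_{\omega'}$. Resolving this is exactly where the conditional-expectation structure and Lemma~\ref{omegac}/Lemma~\ref{mult-dom} must be brought in — one shows $L_{\omega'}(a)$, being in the multiplicative domain of $\omega'$ and satisfying $L_{\omega'}(L_{\omega}(a))=L_{\omega'}(a)$, together with an application of the idempotent identity $\omega'\star{}_b\omega'=\omega'(b)\omega'$, gives $\omega'(L_{\omega}(a)) = \omega'(a)$ for all $a$, and more generally the two-sided invariance needed to conclude $\omega'\star\omega = \omega'$. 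I expect roughly a page of careful but routine manipulation with strict extensions to make each step of this rigorous.
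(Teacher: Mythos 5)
Your reduction of the statement to the operator identity $L_{\omega}L_{\omega'}=L_{\omega'}$ (equivalently, $L_{\omega'}(\alg)\subseteq L_{\omega}(\alg)$) is a correct reformulation, but the step you yourself flag as the main obstacle is a genuine gap, and the tools you propose cannot close it. The hypothesis only yields $L_{\omega'}L_{\omega}=L_{\omega'}$, i.e.\ that $L_{\omega'}$ factors through $L_{\omega}$; for two conditional expectations $E,F$ the identity $FE=F$ does not imply $EF=F$, nor any containment of ranges, so some input beyond the expectation structure is needed (your first-paragraph claim that the range containment follows by ``applying $L_\omega$ to $L_{\omega'}(a)$'' is exactly the non sequitur you later acknowledge). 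The concrete identity you propose to extract, $\omega'(L_{\omega}(a))=\omega'(a)$ for all $a$, is not progress: since $\omega'\circ L_{\omega}=(\omega\ot\omega')\Com=\omega\star\omega'$, it is literally a restatement of the hypothesis $\omega\star\omega'=\omega'$, whereas the conclusion is the different identity $\omega'\circ R_{\omega}=\omega'$. Moreover, Lemma~\ref{omegac} in its two-sided form requires both $\sigma\star\omega=\omega$ and $\omega\star\sigma=\omega$, so to apply it with $\sigma=\omega$ and the state $\omega'$ you would already need $\omega'\star\omega=\omega'$, which is what is to be proved; the one-sided consequence $\omega'\star{}_b\omega'=\omega'(b)\omega'$ together with Lemma~\ref{mult-dom} gives no mechanism for crossing from the left leg of the coproduct to the right leg.

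That left/right asymmetry is precisely what the paper's proof addresses, and it does so with the antipode: by Proposition~\ref{w=wS} every idempotent state satisfies $\omega S(a)=\omega(a)$ for $a\in\dom(S)$, and then Lemma~5.25 of \cite{KV} (the anti-comultiplicativity of $S$) gives, for $a\in\dom(S)$,
\[
\omega'\star\omega(a)=(\omega\ot\omega')\Com\bigl(S(a)\bigr)
=(\omega\star\omega')\bigl(S(a)\bigr)=\omega'\bigl(S(a)\bigr)=\omega'(a),
\]
whence $\omega'\star\omega=\omega'$ by density of $\dom(S)$ and boundedness. Your handling of the antisymmetry statement is fine once this first part is in place, but as written the proposal lacks the ingredient (antipode invariance of idempotent states plus the antipode--coproduct flip) that actually makes the first part true.
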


\begin{proof}
By Proposition~\ref{w=wS}, the functionals $\omega S$ and $\omega' S$ are bounded and their bounded extensions to
all of $A$ are $\omega$ and $\omega'$, respectively. Lemma~5.25 of \cite{KV} implies that for every $a$ in $\dom
(S)$
\[
\omega'\star\omega(a) = (\omega \otimes \omega')\Com(S(a))
= \omega' S(a) = \omega'(a).
\]

As for the second statement, if $\omega\prec\omega'$, then
$\omega = \omega'\star \omega$, which by the beginning of the proof is
equal to $\omega'$.
\end{proof}

\section{Correspondence between idempotent states and right invariant
  expected $C^*$-subalgebras}

This section contains the main results of the paper, establishing the
correspondence between idempotent states and
right invariant expected $C^*$-subalgebras (under the assumption of
unimodularity) and between Haar idempotents
and right invariant symmetric expected $C^*$-subalgebras.  They  extend
on one hand Theorem 4.1 of \cite{FS} and on
the other Theorem 13 of \cite{S}. Again $\alg$ denotes a coamenable
locally compact quantum group.

\begin{deft}
A $C^*$-subalgebra $\clg \sub \alg$ is said to be \emph{right
invariant} if $R_{\mu}(\clg) \sub \clg$ for
all $\mu \in \alg^*$. It is said to be \emph{expected} if there exists
a conditional expectation $E_{\clg}$ onto
$\clg$ that is both $\psi$-preserving and $\phi$-preserving.
\end{deft}

A nondegenerate $C^*$-subalgebra $\clg$ of a coamenable
quantum group $\alg$ is right invariant if and only if
$\Com(\clg)\sub M(\clg\otimes \alg)$.
Indeed, if $\Com(\clg)\sub M(\clg\otimes \alg)$,
then $R_\mu(\clg)\sub M(\clg)\cap \alg$
for every $\mu\in\alg^*$, and the nondegeneracy of $\clg$ implies
that $R_\mu(\clg)\sub \clg$.
The converse is shown in Theorem~\ref{Soltanimproved} below,
but we can also use the so-called slice map property
to give a short argument.
Suppose that $R_\mu(c)\in\clg$ for every $\mu\in\alg^*$ and $c \in \clg$.
Then for every $a\in \alg$, $d\in\clg$ and $\nu\in\alg^*$,
\begin{equation} \label{eq:slice}
(d\ot a)\Com(c)\in \alg\ot \alg\quad\text{and}\quad
(\id_\alg\ot\nu)(d\ot a)\Com(c)\in \clg\ot \alg.
\end{equation}
Since $\alg$ is coamenable, it follows that $\alg$ is
a nuclear $C^*$-algebra \cite{bedos-tuset} and so has the slice map property
introduced by Wassermann~\cite{wassermann}.
By the slice map property, \eqref{eq:slice} implies that
$(d\ot a)\Com(c) \in \clg\ot\alg$.
The other side can be dealt with similarly, so
we see that $\Com(c) \in M(\clg\ot\alg)$.

The relation between right invariant subalgebras and idempotent states
can be described by the two propositions below.

\begin{propn} \label{id-sub}
If\/ $\omega \in \alg^*$ is an idempotent state, then $L_{\omega}(\alg)$
is a right invariant $\phi$-expected $C^*$-subalgebra of $\alg$.
\end{propn}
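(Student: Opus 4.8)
The plan is to verify three things about $\clg := L_\omega(\alg)$: that it is a $C^*$-subalgebra, that it is right invariant, and that it is $\phi$-expected. The first is already essentially done: by Theorem~\ref{idempcondex}, $L_\omega$ is a conditional expectation, and the image of a conditional expectation is a $C^*$-subalgebra. In fact $L_\omega$ restricted to $\clg$ is the identity, so $\clg$ is exactly the fixed-point set of $L_\omega$.

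For right invariance, I would use Lemma~\ref{invar}. Recall from that lemma (applied with $T = L_\omega$, which indeed satisfies the hypotheses since $L_\omega$ is completely bounded and $L_\omega \ot \id_\alg = (\omega\ot\id\ot\id)\Com_{12}$ is strict) that $L_\omega R_\nu = R_\nu L_\omega$ for all $\nu \in \alg^*$. Hence, for $c \in \clg$, i.e.\ $c = L_\omega(c)$, we get $R_\nu(c) = R_\nu(L_\omega(c)) = L_\omega(R_\nu(c)) \in L_\omega(\alg) = \clg$. So $\clg$ is right invariant. (Alternatively one can compute $R_\nu L_\omega = L_\omega R_\nu$ directly from coassociativity.)

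The main work is showing $\clg$ is $\phi$-expected, i.e.\ that $L_\omega$ itself (or some conditional expectation onto $\clg$, but $L_\omega$ is the natural candidate and is the unique one by Lemma~\ref{condexp}) preserves the left Haar weight $\phi$. The key input is left invariance of $\phi$: for $a \in \Mphi^+$ and $\omega$ a state, $\phi(L_\omega(a)) = \phi((\omega\ot\id)\Com(a)) = \omega(1_\alg)\phi(a) = \phi(a)$, and in particular $L_\omega(a) \in \Mphi^+$. One subtlety is that left invariance as stated in the definition is the identity $\phi((\omega\ot\id)\Com(a)) = \omega(1_\alg)\phi(a)$ for $\omega \in \alg^*_+$ and $a \in \Mphi^+$, which is precisely what is needed, but one should check $L_\omega(a)$ lands in $\Mphi^+$ — this is part of what left invariance of the KMS-weight $\phi$ (together with lower semicontinuity) gives, since $L_\omega$ is positive so $L_\omega(a) \geq 0$, and the invariance identity forces $\phi(L_\omega(a)) = \phi(a) < \infty$.

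So the only genuine obstacle is making the weight-preservation argument rigorous at the level of the definition of left invariance, i.e.\ confirming that the identity holds for all $a\in\Mphi^+$ with the conclusion $L_\omega(a)\in\Mphi^+$; this is immediate from the definition given in the excerpt. Assembling: $\clg = L_\omega(\alg)$ is a $C^*$-subalgebra (image of a conditional expectation), it is right invariant (by $L_\omega R_\nu = R_\nu L_\omega$ from Lemma~\ref{invar}), and $L_\omega$ is a $\phi$-preserving conditional expectation onto it (by left invariance of $\phi$), so $\clg$ is $\phi$-expected, completing the proof.
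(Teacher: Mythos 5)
Your proposal is correct and follows the paper's own (very terse) argument: the paper also deduces everything from Theorem~\ref{idempcondex} (so that $L_\omega$ is a conditional expectation, whence its image is a $C^*$-subalgebra onto which it projects) together with the definition of left invariance of $\phi$, which gives $L_\omega(a)\in\Mphi^+$ and $\phi(L_\omega(a))=\phi(a)$ exactly as you argue; right invariance via $R_\nu L_\omega = L_\omega R_\nu$ (coassociativity, or Lemma~\ref{invar}) is the same implicit step. Your write-up simply makes explicit the details the paper leaves to the reader, so no changes are needed.
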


\begin{proof}
The statement is a direct consequence of Theorem \ref{idempcondex} and
the definition of the left Haar weight $\phi$.
\end{proof}

\begin{propn} \label{sub-id}
If $\clg$ is a right invariant $\psi$-expected $C^*$-subalgebra of
$\alg$, then there exists a unique idempotent
state $\omega\in \alg^*$ such that $\clg = L_{\omega}(\alg)$.
\end{propn}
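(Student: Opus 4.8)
The natural candidate is $\omega:=\Cou\circ E_{\clg}$, where $E_{\clg}$ is the conditional expectation onto $\clg$ preserving $\psi$ (it exists by hypothesis and is unique by Lemma~\ref{condexp}) and $\Cou$ is the counit, available since $\alg$ is coamenable. We may assume $\clg$ is nondegenerate in $\alg$, so that $E_{\clg}$, hence also $E_{\clg}\ot\id_{\alg}$, is strict. Then $\omega$ is positive as a composition of positive maps, and $\omega(1_{\alg})=\Cou(\widetilde{E}_{\clg}(1_{\alg}))=\Cou(1_{\alg})=1$ by Lemma~\ref{multmod}, so $\omega$ is a state. The whole proof reduces to the identity
\[
L_{\omega}=E_{\clg}.
\]
Granting it, $L_{\omega}(\alg)=E_{\clg}(\alg)=\clg$, and since $L_{\omega}$ is then a conditional expectation, Theorem~\ref{idempcondex} gives that $\omega$ is an idempotent state (equivalently, $L_{\omega\star\omega}=L_{\omega}^{2}=E_{\clg}^{2}=E_{\clg}=L_{\omega}$, whence $\omega\star\omega=\omega$).

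To obtain $L_{\omega}=E_{\clg}$ I would apply Lemma~\ref{invar} to $T=E_{\clg}$: this $T$ is completely bounded and $T\ot\id_{\alg}$ is strictly continuous on bounded sets, so it suffices to verify condition~(iii), namely $E_{\clg}R_{\nu}=R_{\nu}E_{\clg}$ for all $\nu$ in a weak$^{*}$-dense subset of $\alg^{*}$ invariant under the right action of a dense subalgebra; the lemma then produces $\mu\in\alg^{*}$ with $E_{\clg}=L_{\mu}$, and applying $\Cou$ gives $\mu=\Cou\circ E_{\clg}=\omega$. By linearity one may take $\nu$ to be a state, so that $R_{\nu}$ is unital completely positive and sends $\Npsi$ into itself. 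Both $E_{\clg}R_{\nu}$ and $R_{\nu}E_{\clg}$ map into $\clg$ (the latter by right invariance of $\clg$) and they agree on $\clg$, so it is enough to show $E_{\clg}(R_{\nu}(b))=0$ whenever $E_{\clg}(b)=0$. Choosing such a $b$ in $\Npsi$, Lemma~\ref{condexp} rephrases the hypothesis as $\psi(d^{*}b)=0$ for all $d\in\clg\cap\Npsi$ and the conclusion as $\psi(c^{*}R_{\nu}(b))=0$ for all $c\in\clg\cap\Npsi$. Hence everything comes down to the implication
\[
\psi(d^{*}b)=0\ \ (d\in\clg\cap\Npsi)\quad\Longrightarrow\quad \psi(c^{*}R_{\nu}(b))=0\ \ (c\in\clg\cap\Npsi).
\]

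I expect this implication to be the main obstacle; it cannot be established by formal manipulation alone, and it is exactly where the right invariance of the Haar weight $\psi$ enters, together with the fact that $\clg$ is invariant under the modular automorphism group $\sigma^{\psi}$ of $\psi$ --- a property guaranteed by Takesaki's theorem precisely because $\clg$ is $\psi$-expected. The route I would follow is to pass to the GNS representation $(H_{\psi},\Lambda_{\psi})$ of $\psi$, in which $E_{\clg}$ induces the orthogonal projection $e$ of $H_{\psi}$ onto $\overline{\Lambda_{\psi}(\clg\cap\Npsi)}$: the hypothesis on $b$ then reads $e\Lambda_{\psi}(b)=0$ and the desired conclusion reads $e\Lambda_{\psi}(R_{\nu}(b))=0$. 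Rewriting this via the unitary $V$ that implements $\Com$ on $H_{\psi}\ot H_{\psi}$ and encodes the right invariance of $\psi$, one sees that it is equivalent to the statement that $e\ot1$ commutes with $V$, i.e.\ to condition~(ii) of Lemma~\ref{invar}, $(E_{\clg}\ot\id_{\alg})\Com=\Com\,E_{\clg}$. The inclusion $\Com(\clg)\sub M(\clg\ot\alg)$ already yields that $e\ot1$ commutes with $\Com(c)$ for every $c\in\clg$; the additional input required to promote this to commutation with $V$ itself is the $\sigma^{\psi}$-invariance of $\clg$. This is the technical heart of the argument, and the point at which the locally compact setting genuinely differs from the compact one, where a dense Hopf $^{*}$-algebra is available for direct computation.

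The uniqueness assertion is easy and requires neither $\psi$-expectedness nor unimodularity. If $\omega'$ is another idempotent state with $L_{\omega'}(\alg)=\clg$, then $L_{\omega'}$ is a conditional expectation onto $\clg$ by Theorem~\ref{idempcondex}, and it preserves $\phi$ by left invariance of $\phi$; since $\phi$ is a densely defined faithful weight, the uniqueness clause of Lemma~\ref{condexp} forces $L_{\omega'}=L_{\omega}$, hence $\omega'=\omega$.
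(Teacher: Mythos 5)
Your reduction is exactly the right one, and the same as the paper's: take $\omega=\Cou\circ E_{\clg}$, invoke Lemma~\ref{invar} together with Theorem~\ref{idempcondex}, note that on $\clg$ the commutation $E_{\clg}R_{\nu}=R_{\nu}E_{\clg}$ is automatic from right invariance, and boil everything down (via Lemma~\ref{condexp}) to the implication: if $b\in\Npsi$ with $E_{\clg}(b)=0$, then $\psi(c^{*}R_{\nu}(b))=0$ for all $c\in\clg\cap\Npsi$. Your uniqueness argument is also correct (any state $\mu$ makes $L_{\mu}$ $\phi$-preserving by left invariance, so the uniqueness clause of Lemma~\ref{condexp} applies). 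But at the decisive step you stop: you declare the implication to be ``the technical heart'', gesture at Takesaki's theorem, $\sigma^{\psi}$-invariance of $\clg$ and commutation of $e\ot 1$ with $V$, and give no argument. Moreover the route you sketch is circular as stated: you say the implication (over all $\nu$) amounts to $(E_{\clg}\ot\id_{\alg})\Com=\Com E_{\clg}$, i.e.\ to condition~(ii) of Lemma~\ref{invar} --- but producing (ii) is exactly what the reduction to (iii) was meant to accomplish, so rewriting (iii) as (ii) proves nothing, and no independent argument for the commutation of $e\ot1$ with $V$ is supplied.

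What actually closes the gap is a concrete antipode mechanism, not modular theory: one does not verify (iii) for arbitrary states, but only for $\nu$ in the class $L^{1}_{\sharp}(\alg)$ of normal functionals admitting $\nu^{\sharp}$ with $\conj{\nu}(S(a))=\nu^{\sharp}(a)$ on $\dom(S)$; this class is weak$^{*}$-dense and invariant under the right action of the dense subalgebra $\dom(S)$ (a short computation), so Lemma~\ref{invar} still applies. For such $\nu$ write $\rho=\conj{\nu^{\sharp}}$, so that $\nu=\rho\circ S$, and use the strong right invariance of $\psi$ (Proposition~5.24 of \cite{KV}):
\[
\psi(c^{*}R_{\nu}(b))
=\rho\circ S\bigl((\psi\ot\id_{\alg})((c^{*}\ot 1_{\alg})\Com(b))\bigr)
=\rho\bigl((\psi\ot\id_{\alg})(\Com(c^{*})(b\ot 1_{\alg}))\bigr)
=\psi\bigl(R_{\conj{\rho}}(c)^{*}b\bigr),
\]
which vanishes because $R_{\conj{\rho}}(c)\in\clg\cap\Npsi$ by right invariance of $\clg$ and $b\in\clg^{\perp}$. (One also needs the small observation, which you did make, that $R_{\nu}(b)\in\Npsi$.) Without this input --- the passage through the antipode and the strong form of right invariance of the Haar weight --- your argument does not go through, so the proposal has a genuine gap at its central point, even though the surrounding reductions and the uniqueness part are sound.
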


\begin{proof}
Let $E_{\clg}$ denote the $\psi$-preserving conditional expectation
onto a right invariant $C^*$-subalgebra $\clg$. We want to show that
$E_\clg$ is of the form $L_{\omega}(\alg)$ for an idempotent state
$\omega\in \alg^*$. Due to Lemma \ref{invar} and Theorem
\ref{idempcondex} it suffices to show that
\[
E_{\clg} R_{\nu} = R_{\nu} E_{\clg}
\]
for every $\nu \in G$, where $G$ is a weak$^*$-dense subspace of $\alg^*$
that is invariant under the action of a dense
subalgebra of $\alg$.
Let $L^1(\alg)$ be the collection of $\nu\in\alg^*$
that are restrictions of weak$^*$-continuous functionals on
$B(H)$.
Moreover, let $L^1_\sharp(\alg)$ be the collection of
all $\nu\in L^1(\alg)$ for which there exists $\nu^\sharp\in L^1(\alg)$
such that $\conj\nu(S(a)) = \nu^\sharp(a)$
for every $a\in \dom (S)$ (here $\conj{\nu}$ is defined by
$\conj{\nu}(a) = \conj{\nu(a^*)}$\,). Now we consider
$G = L^1_\sharp(\alg)$ which is weak$^*$-dense in $\alg^*$ and invariant under
the right action of the dense subalgebra $\dom (S)$.
The first fact is well known \cite{KV}. To check the latter statement,
note that for $\nu\in L^1_\sharp(\alg)$, $a\in \dom (S)$ and
$x\in \dom(S)$, we have
\[
\conj{\nu_a}(S(x)) = \conj{\nu(aS(x)^*)} = \conj{\nu}(S(x)a^*)
=\conj{\nu}\bigl(S(x)S(S(a)^*)\bigr) = \nu^\sharp(S(a)^* x).
\]
Since $L^1(\alg)$ is invariant under the right action of $\alg$,
$(\nu^\sharp)_{S(a)^*}\in L^1(\alg)$, and so it follows that
$\nu_a\in L^1_\sharp(\alg)$.

Let $\nu \in L^1_\sharp(\alg)$. We need to show that
\[
E_{\clg} R_{\nu} = R_{\nu} E_{\clg}.
\]
As the maps above are linear and continuous, it suffices to show that
\begin{equation} \label{commutN}
E_{\clg} (R_{\nu}(a)) = R_{\nu} (E_{\clg}(a)),\qquad a \in \Npsi.
\end{equation}
If $a\in \clg$, then the formula above is an immediate consequence of right invariance of $\clg$. Let then $a \in
\clg^{\perp}$ (see the notation in Lemma~\ref{condexp}). Due to Lemma~\ref{condexp} it suffices to prove that
$R_{\nu}(a) \in \clg^{\perp}$. Note first that $R_{\nu}(a) \in \Npsi$ because we can decompose $\nu$ into states
and then use the Kadison--Schwarz inequality and right invariance of $\psi$. Put $\rho = \overline{\nu^{\sharp}}$
so that $\rho \circ S= \nu$. Let $c \in \clg \cap \Npsi$ and compute:
\begin{align*}
\psi (c^* R_{\nu} (a))
&= \psi \bigl( (\id_{\alg} \ot \nu)((c^* \ot 1_\alg) \Com (a)) \bigr)
 =\nu \bigl( (\psi \ot \id_{\alg})((c^* \ot 1_\alg) \Com (a)) \bigr) \\
&= \rho \circ S \bigl( (\psi \ot \id_{\alg})( (c^* \ot 1_\alg)\Com (a) ) \bigr)
 = \rho\bigl( (\psi \ot \id_{\alg})( \Com (c^*)(a \ot 1_\alg) )\bigr).
\end{align*}
The last equality holds because $\psi$ is strongly right invariant
\cite[Proposition 5.24]{KV}. Continuing the calculation, we have
\[
\psi (c^* R_{\nu} (a))
 = \psi\bigl((\id_{\alg} \ot \rho)( \Com (c^*)(a \ot 1_\alg))\bigr)
 = \psi(R_{\overline{\rho}}(c)^* a)=0,
\]
as $\clg$ is right invariant and $a\in \clg^{\perp}$. So
$R_\nu(a)\in\clg^\perp$ as required.
\end{proof}

A combination of Propositions \ref{id-sub} and \ref{sub-id} immediately yields the next corollary.

\begin{cor}
If $\clg$ is a right invariant $\psi$-expected $C^*$-subalgebra of
$\alg$ then it is expected \textup{(}the $\psi$-preserving conditional expectation onto $\clg$ preserves also $\phi$\textup{)}.
\end{cor}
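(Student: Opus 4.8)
The plan is to extract from Proposition~\ref{sub-id} the idempotent state attached to $\clg$, to note that the $\psi$-preserving conditional expectation onto $\clg$ is nothing but the corresponding left convolution operator, and then to invoke Proposition~\ref{id-sub} to see that this same operator also preserves $\phi$. Throughout I use that the right Haar weight $\psi$ is a faithful, densely defined KMS-weight, so that Lemma~\ref{condexp} applies and the $\psi$-preserving conditional expectation onto $\clg$ is uniquely determined; denote it $E_\clg$.

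First I would apply Proposition~\ref{sub-id} to the right invariant $\psi$-expected subalgebra $\clg$. It yields a unique idempotent state $\omega\in\alg^*$ with $\clg = L_\omega(\alg)$, and from its proof one has in fact $E_\clg = L_\omega$: one checks that $E_\clg$ commutes with every $R_\nu$, whence $E_\clg = L_\mu$ for some $\mu\in\alg^*$ by Lemma~\ref{invar}, $\mu$ is an idempotent state by Theorem~\ref{idempcondex} because $E_\clg$ is a conditional expectation, and the uniqueness clause identifies $\mu$ with $\omega$. Thus the conditional expectation we must analyse is precisely $L_\omega$.

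It then remains to apply Proposition~\ref{id-sub} to this $\omega$, which says that $L_\omega(\alg) = \clg$ is $\phi$-expected, the conditional expectation realising this being $L_\omega$ itself: for $a\in\Mphi^+$ the element $L_\omega(a) = (\omega\ot\id_\alg)\Com(a)$ is positive, and left invariance of $\phi$ gives $\phi(L_\omega(a)) = \omega(1_\alg)\phi(a) = \phi(a) < \infty$, so $L_\omega(a)\in\Mphi^+$ with $\phi(L_\omega(a)) = \phi(a)$. Combining the two steps, $E_\clg = L_\omega$ is simultaneously $\psi$-preserving and $\phi$-preserving, i.e.\ $\clg$ is expected. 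I do not expect a real obstacle, as the corollary is a formal consequence of the two propositions; the only point not to gloss over is that the $\psi$-preserving and the $\phi$-preserving conditional expectations onto $\clg$ are literally the same map, which is exactly what the identification $E_\clg = L_\omega$, backed by the uniqueness in Lemma~\ref{condexp}, provides.
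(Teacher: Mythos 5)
Your argument is exactly the paper's: the corollary is stated there as an immediate combination of Propositions~\ref{id-sub} and \ref{sub-id}, with the key identification $E_\clg = L_\omega$ coming from the proof of Proposition~\ref{sub-id} (via Lemma~\ref{invar} and Theorem~\ref{idempcondex}) and $\phi$-preservation from left invariance of $\phi$. The proposal is correct and follows the same route, just spelled out in more detail.
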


The difference between the conditions in the propositions above lies
of course in the fact that in the first of
them we see the left Haar weight, and in the second the right Haar
weight. If $\alg$ is unimodular, the
distinction disappears and we obtain the following result. Recall that
the set of right invariant expected
$C^*$-subalgebras of $\alg$ is equipped with a partial order given by
the set inclusion.

\begin{tw} \label{thm:unimodular-corr}
Suppose that $\alg$ is a unimodular coamenable locally compact quantum
group. There is an order-preserving one-to-one correspondence between
idempotent states $\omega$ on $\alg$ and right invariant, expected
$C^*$-subalgebras $\clg$ of $\alg$. The correspondence is given by
\[
\clg_\omega = L_\omega(\alg),\qquad \omega_\clg = \Cou\circ E_\clg,
\]
where $E_\clg$ denotes the conditional expectation onto $\clg$ and
$\Cou$ the counit of $\alg$.
\end{tw}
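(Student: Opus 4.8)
The plan is to assemble the theorem almost entirely from results already established in the excerpt, so the main work is to verify that the two assignments $\omega\mapsto\clg_\omega$ and $\clg\mapsto\omega_\clg$ are mutually inverse and order-preserving. Under unimodularity $\phi=\psi$, so "$\psi$-expected", "$\phi$-expected" and "expected" coincide, and Propositions~\ref{id-sub} and \ref{sub-id} already say: if $\omega$ is idempotent then $\clg_\omega:=L_\omega(\alg)$ is a right invariant expected $C^*$-subalgebra; and if $\clg$ is a right invariant expected $C^*$-subalgebra then there is a \emph{unique} idempotent state $\omega$ with $\clg=L_\omega(\alg)$. So I would first invoke these to get well-defined maps in both directions.

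Next I would check the two compositions. For $\clg\mapsto\omega_\clg\mapsto\clg_{\omega_\clg}$: by the uniqueness clause in Proposition~\ref{sub-id} there is exactly one idempotent state $\omega$ with $L_\omega(\alg)=\clg$, and I must identify it with $\Cou\circ E_\clg$. Here the argument inside the proof of Proposition~\ref{sub-id} shows $E_\clg$ commutes with the $R_\nu$, hence by Lemma~\ref{invar} $E_\clg=L_\mu$ with $\mu=\Cou\circ E_\clg$; since $E_\clg$ is a conditional expectation, Theorem~\ref{idempcondex} forces $\mu$ to be an idempotent state, and then $L_\mu(\alg)=\ran E_\clg=\clg$. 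So $\omega_\clg=\mu$ is exactly the idempotent state produced by Proposition~\ref{sub-id}, and $\clg_{\omega_\clg}=\clg$. For the other composition $\omega\mapsto\clg_\omega\mapsto\omega_{\clg_\omega}$: by Theorem~\ref{idempcondex}, $L_\omega$ is a conditional expectation onto $\clg_\omega=L_\omega(\alg)$, so by uniqueness of the $\psi$-preserving conditional expectation (Lemma~\ref{condexp}, using that $L_\omega$ preserves $\phi=\psi$ by Proposition~\ref{id-sub}) we get $E_{\clg_\omega}=L_\omega$, whence $\omega_{\clg_\omega}=\Cou\circ L_\omega=\omega$ (the last equality is the identity $\omega=\Cou\circ L_\omega$, which holds because $L_\omega=(\omega\ot\id)\Com$ and $\Cou\ot\id$ is a left counit).

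Finally, order-preservation in both directions. Recall $\omega\prec\omega'$ means $\omega\star\omega'=\omega'$, equivalently (since $L_{\mu}L_{\nu}=L_{\nu\star\mu}$) $L_{\omega'}L_\omega=L_{\omega'}$, i.e.\ $L_{\omega'}$ restricted to $\clg_\omega$ is still the full map $L_{\omega'}$; a clean way to phrase it: $\omega\prec\omega'$ iff $\clg_{\omega'}\sub\clg_\omega$. I would prove this equivalence directly: if $\omega\prec\omega'$ then for $b\in\clg_{\omega'}=\ran L_{\omega'}$ write $b=L_{\omega'}(x)$, and $L_\omega(b)=L_\omega L_{\omega'}(x)=L_{\omega'\star\omega}(x)$; using Proposition~\ref{antisymm}, $\omega\prec\omega'$ also gives $\omega'\star\omega=\omega'$, so $L_\omega(b)=L_{\omega'}(x)=b$, i.e.\ $b\in\ran L_\omega=\clg_\omega$. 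Conversely if $\clg_{\omega'}\sub\clg_\omega$ then $L_\omega$ fixes $\ran L_{\omega'}$ pointwise, so $L_\omega L_{\omega'}=L_{\omega'}$, i.e.\ $L_{\omega'\star\omega}=L_{\omega'}$, and by injectivity of $\mu\mapsto L_\mu$ we get $\omega'\star\omega=\omega'$; one more application of Proposition~\ref{antisymm} yields $\omega\star\omega'=\omega'$, that is $\omega\prec\omega'$. Combined with the bijection established above, the correspondence is therefore an order anti-isomorphism onto $(\{\text{right invariant expected }C^*\text{-subalgebras}\},\supseteq)$, which is what "order-preserving" should mean here once the partial order on subalgebras is taken to be reverse inclusion; I would state this convention explicitly so the theorem reads correctly.

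The only genuine subtlety — the "main obstacle" — is the uniqueness bookkeeping: making sure that the idempotent state supplied abstractly by Proposition~\ref{sub-id} really is $\Cou\circ E_\clg$ and not merely \emph{some} idempotent state with the right image. This is handled by noting that $\Cou\circ E_\clg=\Cou\circ L_{\omega_\clg}=\omega_\clg$ once we know $E_\clg=L_{\omega_\clg}$, and the latter follows from Lemma~\ref{invar} applied to the commutation relations proved in Proposition~\ref{sub-id}; everything else is a formal consequence of $L_\mu L_\nu=L_{\nu\star\mu}$, the injectivity of $\mu\mapsto L_\mu$, Theorem~\ref{idempcondex}, and the uniqueness of $\psi$-preserving conditional expectations from Lemma~\ref{condexp}.
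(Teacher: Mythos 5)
Your argument is correct and is essentially the paper's own proof written out in full: the paper likewise obtains the bijection directly from Propositions \ref{id-sub} and \ref{sub-id} (with $\phi=\psi$ by unimodularity) and dismisses the order statement as ``easy to check'', while you supply exactly the bookkeeping it leaves implicit --- identifying $E_\clg=L_{\omega_\clg}$ via Lemma \ref{invar} together with the uniqueness of the $\psi$-preserving expectation from Lemma \ref{condexp}, and the identity $\Cou\circ L_\omega=\omega$. Two small remarks: the implication $\omega'\star\omega=\omega'\Rightarrow\omega\star\omega'=\omega'$ is not literally an instance of Proposition \ref{antisymm} (whose hypothesis is $\omega\star\omega'=\omega'$) but follows from the same antipode computation, and your observation that with $\prec$ and set inclusion as defined the bijection is order-\emph{reversing} (so ``order-preserving'' must be read with reverse inclusion on the subalgebra side, as the example of $\Cou$ versus a Haar idempotent already shows) is a fair clarification of a point the paper glosses over rather than a defect in your proof.
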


\begin{proof}
The existence of the correspondence is a consequence of Propositions
\ref{id-sub} and \ref{sub-id}.  The fact that it preserves respective
partial orders is easy to check.
\end{proof}

We will show later (in Theorem \ref{Haarcorresp}) that even in the
absence of unimodularity there is a one-to-one
correspondence between Haar idempotents on a locally compact quantum
group and right invariant expected $C^*$-subalgebras of a particular
type. Before we do that we provide in the next theorem a criteria which
characterises Haar idempotents among all idempotent states. We need
another definition.

\begin{deft} A right invariant $C^*$-subalgebra $\clg$ of $\alg$ is
  said to be \emph{symmetric} if
\begin{equation} \label{conorm}
V^*(1\otimes c) V \in M(K(H)\ot \clg)
\end{equation}
for every $c$ in $\clg$.
\end{deft}

The above definition was used in \cite{S}
and a similar condition appeared already in a slightly different guise
in \cite{Reiji} under the name of `coaction symmetry':
it can be interpreted as the invariance of $\clg$ under the natural
action of the dual locally compact quantum group $\hat{\alg}$ on $B(H)$.
If $\alg$ is the group $C^*$-algebra of an amenable locally compact
group $G$ and $H$ is an open subgroup of $G$, then $C^*(H)$ is
symmetric in $C^*(G)$ if and only if $H$ is a normal subgroup
(see Section 7 in \cite{S};
related facts can be also found in Theorem 3.5 of \cite{FST}).

\begin{tw}   \label{thm:Haar-equiv}
Let $\omega$ be an idempotent state on $\alg$ and let $\clg_{\omega} =
L_\omega(\alg)$.
Then the following are equivalent:
\begin{rlist}
\item $\omega$ is a Haar idempotent; \item $\clg_{\omega}$ is symmetric;
\item $N_{\omega}:=\set{a \in \alg}{\omega(a^*a)=0}$ is an ideal
      \textup{(}equivalently a $^*$-subspace\textup{)}.
\end{rlist}
\end{tw}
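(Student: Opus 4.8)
The plan is to prove the cycle of implications (i)$\Rightarrow$(ii)$\Rightarrow$(iii)$\Rightarrow$(i), since (ii) is the most geometric condition and (iii) the most algebraic, while (i) is the structural one we ultimately want to characterise.

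\textbf{Step 1: (i)$\Rightarrow$(ii).} Suppose $\omega = h_\blg\circ\pi_\blg$ for a compact quantum subgroup $\blg$. The key identity is that $L_\omega$ is the composition of the coaction-type map $(\pi_\blg\ot\id_\alg)\Com_\alg$ with slicing by $h_\blg$, so $\clg_\omega = L_\omega(\alg)$ is exactly the fixed-point algebra of the left coaction of $\blg$ on $\alg$ (equivalently, the image of the coaction averaged against $h_\blg$). I would then recall that the right multiplicative unitary $V$ implements, on the second leg, the natural action of $\hat\alg$; the condition $V^*(1\ot c)V\in M(K(H)\ot\clg_\omega)$ says precisely that $\clg_\omega$ is invariant under this $\hat\alg$-action. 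Because the $\blg$-coaction (on the "$\alg$-side") and the $\hat\alg$-action (on the "$B(H)$-side") commute — this is essentially the compatibility $(\pi_\blg\ot\pi_\blg)\Com_\alg = \Com_\blg\pi_\blg$ expressed at the level of multiplicative unitaries, together with the pentagon relation — the fixed-point algebra of the first is automatically invariant under the second. Concretely: for $c\in\clg_\omega$ write $c = L_\omega(a)$, use $(\id\ot L_\omega)V = (p\ot 1)V$ from the proof of Proposition~\ref{w=wS} together with the corresponding factorisation through $\pi_\blg$, and check by a direct leg-numbering computation that $V^*(1\ot c)V$ again lies in the range of $\id\ot L_\omega$, which is $M(K(H)\ot\clg_\omega)$.

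\textbf{Step 2: (ii)$\Rightarrow$(iii).} Assume $\clg_\omega$ is symmetric. By Lemma~\ref{mult-dom}, $\clg_\omega = L_\omega(\alg)$ sits inside the multiplicative domain of $\omega$, so $N_\omega\cap\clg_\omega = \{0\}$ and in fact $\omega|_{\clg_\omega}$ is a faithful state. The plan is to identify $N_\omega$ as the kernel of $L_\omega$: indeed for any $a$, applying that $L_\omega$ is a $\omega$-preserving conditional expectation (Theorem~\ref{idempcondex}) onto $\clg_\omega$ gives $\omega(a^*a) = \omega(L_\omega(a^*a)) \geq \omega(L_\omega(a)^*L_\omega(a))$ with equality forcing (by faithfulness of $\omega$ on the multiplicative domain) $L_\omega(a^*a) = L_\omega(a)^*L_\omega(a)$; combined with the Kadison--Schwarz gap this should yield $N_\omega = \ker L_\omega$. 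Then $N_\omega$ is an ideal iff $\ker L_\omega$ is, which is where symmetry enters: symmetry of $\clg_\omega$ translates, via $\Com(a) = V(a\ot 1)V^*$, into $L_\omega$ commuting with all $R_\nu$ being upgraded to a bimodule-type property making $\ker L_\omega$ two-sided. This is the step I expect to be the main obstacle — pinning down exactly why the $\hat\alg$-invariance of $\clg_\omega$ (a condition about $V^*(1\ot c)V$) forces $\ker L_\omega$ to absorb multiplication from $\alg$ on both sides rather than just the one side automatically handled by $\clg_\omega$ being a subalgebra. I would try to express $\ker L_\omega$ using $\psi$ (via Lemma~\ref{condexp}, so $\ker L_\omega\cap\Npsi = \clg_\omega^\perp = \{a\in\Npsi : \psi(c^*a)=0\ \forall c\in\clg_\omega\cap\Npsi\}$) and then use the symmetry condition together with strong right invariance of $\psi$ to show this set is closed under left multiplication by $\alg$ as well.

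\textbf{Step 3: (iii)$\Rightarrow$(i).} Assume $N_\omega$ is an ideal (equivalently a $^*$-subspace — note $N_\omega$ is always a closed left ideal for a state, so being a $^*$-subspace is the same as being two-sided). Then $\alg/N_\omega$ carries an induced $C^*$-algebra structure and $\omega$ descends to a faithful state $\bar h$ on $\blg := \alg/N_\omega$. The plan is to show $\Com_\alg$ descends to a coproduct $\Com_\blg$ on $\blg$ making the quotient map $\pi_\blg$ a quantum-subgroup morphism, and that $\bar h$ is the Haar state. Idempotence $\omega\star\omega=\omega$ is exactly what is needed to see that $(\pi_\blg\ot\pi_\blg)\Com_\alg$ kills $N_\omega$: if $a\in N_\omega$ then $\omega(a^*a)=0$, and using Lemma~\ref{omegac} ($\omega\star{}_b\omega = \omega(b)\omega$) one computes that $(\omega\star\omega)\bigl((\cdot)^*\Com_\alg(a)^*\Com_\alg(a)(\cdot)\bigr)$-type quantities vanish, so $\Com_\alg(N_\omega)\sub N_\omega\ot\alg + \alg\ot N_\omega$ and the descent is well-defined. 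Coassociativity of $\Com_\blg$ and the cancellation laws for $\blg$ follow by pushing down the corresponding properties of $\alg$ through the surjection; the cancellation laws, giving that $\blg$ is a genuine compact quantum group, use that $\Com_\alg(\alg)(1\ot\alg)$ and $\Com_\alg(\alg)(\alg\ot 1)$ are dense in $\alg\ot\alg$, which survives applying $\pi_\blg\ot\pi_\blg$. Finally $\bar h\star\bar h = \bar h$ together with faithfulness identifies $\bar h$ as the (unique) Haar state of $\blg$, and by the remark at the end of Section~1 this $\blg$ is a compact quantum subgroup of $\alg$ in our sense, so $\omega = h_\blg\circ\pi_\blg$ is a Haar idempotent.

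\textbf{Summary of the hard point.} Steps 1 and 3 are essentially bookkeeping with the defining relations of quantum groups and the already-established Lemma~\ref{omegac} and Theorem~\ref{idempcondex}; the real content is Step 2, reconciling the "spatial/dual-side" symmetry condition \eqref{conorm} with the purely algebraic two-sidedness of $N_\omega = \ker L_\omega$. I would attack it by exploiting strong right invariance of the Haar weight $\psi$ (Proposition 5.24 of \cite{KV}) exactly as in the proof of Proposition~\ref{sub-id}, where a similar left/right asymmetry was overcome using $S$ and $\psi$.
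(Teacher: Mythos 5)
Your overall architecture differs from the paper's: you attempt the cycle (i)$\Rightarrow$(ii)$\Rightarrow$(iii)$\Rightarrow$(i), whereas the paper never proves (ii)$\Rightarrow$(iii) directly — it proves (ii)$\Rightarrow$(i) by invoking the machinery of \cite{S} (Theorem~2 there produces a compact quantum subgroup from a symmetric right invariant subalgebra, Theorem~11 identifies $\clg_\omega$ with the subalgebra coming from that subgroup, and uniqueness of the $\phi$-preserving conditional expectation from Lemma~\ref{condexp} then forces $\omega$ to equal the resulting Haar idempotent), and separately (i)$\Rightarrow$(iii). Your Step~3 is essentially the paper's (iii)$\Rightarrow$(i) (though you skip the unitality of $\alg/N_\omega$, which the paper needs Lemma~\ref{mult-dom} for), and your Step~1 is a sketch of what the paper simply cites as Theorem~10 of \cite{S}; as written it is too vague, since any argument of the form ``$V^*(1\ot L_\omega(a))V$ lies in the range of $\id\ot L_\omega$'' must use the Haar property in an essential way, or it would prove symmetry for \emph{all} idempotent states, contradicting the theorem itself (via Pal's non-Haar examples).

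The genuine gap is Step~2, and it rests on two false claims. First, Lemma~\ref{mult-dom} says $\clg_\omega$ lies in the multiplicative domain of $\omega$, hence $\omega(cd)=\omega(c)\omega(d)$ for $c,d\in\clg_\omega$: so $\omega|_{\clg_\omega}$ is a \emph{character}, which is the opposite of faithful whenever $\clg_\omega\neq\complex 1$, and $N_\omega\cap\clg_\omega$ is the (large) kernel of this character rather than $\{0\}$. Second, $N_\omega\neq\ker L_\omega$ even for Haar idempotents: take $\alg=C_0(G)$, $H\subset G$ compact, $\omega(f)=\int_H f\,dh$. Then $L_\omega(f)(t)=\int_H f(ht)\,dh$, so $\clg_\omega$ consists of functions constant on the cosets $Ht$ and $\omega|_{\clg_\omega}$ is evaluation at $e$; here $N_\omega=\set{f}{f|_H=0}$ (an ideal, as it must be), while $\ker L_\omega=\set{f}{\int_H f(ht)\,dh=0 \text{ for all } t}$, which is neither equal to $N_\omega$ nor an ideal — the kernel of a conditional expectation is only a $\clg_\omega$-bimodule. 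So the proposed identification ``$N_\omega$ is an ideal iff $\ker L_\omega$ is'' collapses, and with it the whole link (ii)$\Rightarrow$(iii); you would need to replace it by something like the paper's route through (i), i.e.\ actually constructing the compact quantum subgroup from the symmetric subalgebra, which is precisely the nontrivial input imported from \cite{S}.
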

\begin{proof}
(i)$\Longrightarrow$(ii) Suppose that $\blg$ is
a compact quantum subgroup of $\alg$, that $\pi_{\blg}$ is a
corresponding morphism and that $\omega=h_{\blg} \circ \pi_{\blg}$,
where $h_{\blg}$ is the Haar state of $\blg$. Then  Theorem 10 of
\cite{S} states that $\clg_\omega = L_{\omega}(\alg)$ is symmetric.

(ii)$\Longrightarrow $(i)
The left invariant $C^*$-subalgebra $\clg_\omega$ gives
rise to a compact quantum subgroup $\blg$ of
$\alg$ by Theorem~2 of \cite{S}. Let $\pi_\blg:\alg \to \blg$ be the
associated  morphism and let $h_\blg$ be the
Haar state of $\blg$. Now the Haar idempotent $\gamma:=h_{\blg}\circ
\pi_{\blg}$ gives rise to another right invariant $C^*$-subalgebra
$\clg_\gamma=L_\gamma(\alg)$. But since $\clg_{\omega}$ admits a
conditional expectation $L_{\omega}$ satisfying the invariance
condition $(L_{\omega}\ot\id_\alg)\Com = \Com L_{\omega}$, Theorem 11
of \cite{S} implies that $\clg_{\omega}=\clg_{\gamma}$. Now both
$L_\omega$ and $L_\gamma$ are $\phi$-preserving conditional
expectations onto the same $C^*$-subalgebra, so by the
uniqueness of conditional expectations preserving a fixed faithful
weight, $L_{\omega} = L_{\gamma}$. The last equality yields
$\gamma=\omega$, so that $\omega$ is a Haar idempotent.

(i)$\Longrightarrow $(iii)
Let $\omega$ be a Haar idempotent, so that there exists a compact
quantum subgroup $\blg$ of $\alg$ such that $\omega=h_{\blg} \circ \pi_{\blg}$.
Since the Haar state $h_\blg$ is faithful, we have
\begin{equation}  \label{string}
\begin{split}
a \in N_{\omega} &\iff \omega(a^*a)= 0
 \iff h_\blg(\pi_{\blg}(a)^* \pi_{\blg}(a)) = 0
 \iff \pi_{\blg}(a) =  0\\
& \iff \pi_{\blg}(a^*) = 0
 \iff \omega(aa^*)=0 \iff a^* \in N_{\omega}.
\end{split}
\end{equation}

(iii)$\Longrightarrow $(i)
Suppose that $N_{\omega}$ is an ideal. Let $\blg$ denote the
$C^*$-algebra $\alg /{N_{\omega}}$, and let $\pi_\omega:\alg \to \blg$
be the quotient map.
We claim that $\blg$ is unital. Let $e\in \alg_+$ such that $\omega(e)
= 1$. To show that $\pi_\omega(L_\omega(e))$
is a unit for $\blg$, we need to show that $a - aL_\omega(e)$ and $a -
L_\omega(e)a$ are in $N_\omega$ for every
$a$ in $\alg$. Since $L_\omega(e)$ is in the multiplicative domain of
$\omega$ by Lemma~\ref{mult-dom},
\begin{align*}
\omega\bigl((a - aL_\omega(e))^*(a - a L_\omega(e)&)\bigr)
\\&= \omega\bigl(a^* a - a^* a L_\omega(e) -L_\omega(e^*)a^* a +
         L_\omega(e^*)a^*aL_\omega(e)\bigr)
         \\
&= \omega(a^*a)\bigl(1 - \omega(L_\omega(e)) - \omega(L_\omega(e)) +
              \omega(L_\omega(e))^2\bigr)
\\
&= \omega(a^*a)\bigl(1 - 2\omega(e) +\omega(e)^2\bigr) = 0.
\end{align*}
Since $N_\omega$ is self-adjoint, also
$a-L_\omega(e)a = (a^*-a^*L_\omega(e))^*$ is in $N_\omega$.

The proof continues from now on as in the compact case \cite{FST},
with some necessary modifications. Recall
that a positive map between $C^*$-algebras is said to be faithful if
it maps positive non-zero elements into
non-zero elements. Let $(e_i)_{i \in I}$ denote an approximate unit in
$\alg$. A standard use of the Cauchy--Schwarz
inequality implies that if $a \in N_{\omega}$, then $\omega(e_i a)=0$
for all $i \in I$, and so $\omega(a)=0$. Let
$\mu \in\blg^*$ be a linear functional such that $\mu \circ \pi_\omega
= \omega$. It is obviously positive and
\[
\mu(1_\blg) = \mu\bigl(\pi_\omega(L_\omega(e))\bigr)
=\omega(e) = 1,
\]
so $\mu$ is a state.

Since $\mu\circ\pi_\omega = \omega$ and $\ker\pi_\omega = N_\omega$ it
follows that  $\mu$ is faithful. Let $x$ in $\blg\ot\blg$ be
positive. If $(\id_\blg\ot \mu)(x)=0$, then
$\mu\bigl((\nu \ot \id_\blg)(x)\bigr)=0$ for any state $\nu \in
\blg^*$, and so $(\nu \ot \id_\blg)(x)=0$. This in turn implies that
$x=0$ (replace $\nu$ by an arbitrary continuous functional,
apply another one of these and use the fact that tensor products of
functionals in $\blg^*$ separate points in
$\blg \ot \blg$). Thus we have shown that $\id_\blg \ot \mu:\blg \ot
\blg \to \blg$ is faithful. The faithfulness  of
$\mu \otimes \mu\in (\blg \ot\blg)^*$ is now an easy consequence.

Suppose now that $a \in N_{\omega}$. We have then
\[
0 = \omega(a^*a) = (\omega \otimes \omega)\circ\Com(a^*a)
= (\mu \otimes \mu)\circ (\pi_\omega \otimes \pi_\omega)(\Com(a^*a)),
\]
so also  $(\pi_\omega \otimes \pi_\omega)(\Com(a^*a))= 0$ (note that
we use here a strict extension of $\pi_\omega \otimes \pi_\omega$,
which also takes values in $\blg \ot \blg$). The last statement
implies that $(\pi_\omega \otimes \pi_\omega)\Com(a) = 0$.

The computation in the last paragraph implies that we can define a map
$\Com_\blg: \blg \to \blg \ot \blg$ via the formula
\begin{equation} \label{copb} \Com_{\blg}
\circ \pi_\omega  = (\pi_\omega \otimes \pi_\omega) \circ \Com.
\end{equation}
The fact that $\Com_{\blg}$ is a nondegenerate (hence unital)
$^*$-homo\-mor\-phism follows immediately from the
analogous properties of $\Com$. Once we know that $\Com_\blg$ is
nondegenerate we can check that it is
coassociative (all maps involved in the calculations are strict so
there are no problems in passing to multiplier
algebras).  Finally the quantum cancellation properties of $\blg$
follow from obvious equalities of the type
\[
(\blg\otimes 1_{\blg}) \Com_{\blg}(\blg)
 = (\pi_\omega\ot \pi_\omega)((\alg\ot 1_{\alg})\Com(\alg))
\]
and the fact that the quantum cancellation properties hold for
$\alg$. Thus $(\blg, \Com_{\blg})$ is a compact
quantum group. It is easy to check that $\mu$ defined above is an
idempotent state on $\blg$. As it is faithful,
it coincides with the Haar state of $\blg$ by \cite{woronowicz98}.
\end{proof}

The following corollary is an immediate consequence of the equivalence
of (i) and (iii). In the commutative case it gives the Kawada-It\^o Theorem.

\begin{cor}
If an idempotent state on a coamenable locally compact quantum group
$\alg$ is tracial, then it is a Haar
idempotent. In particular if $\alg$ is commutative, then all
idempotent states on $\alg$ are Haar idempotents.
\end{cor}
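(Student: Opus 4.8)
The plan is to derive this directly from the equivalence of conditions (i) and (iii) in Theorem~\ref{thm:Haar-equiv}. The only point requiring care is the following standard observation: for \emph{any} state $\omega$ on a $C^*$-algebra $\alg$, the set $N_\omega = \set{a\in\alg}{\omega(a^*a)=0}$ is the left kernel of the GNS representation of $\omega$, hence always a closed left ideal of $\alg$, and in particular a linear subspace. Consequently, to verify condition (iii) of Theorem~\ref{thm:Haar-equiv} one only needs to check that $N_\omega$ is closed under taking adjoints; this is exactly what the parenthetical ``equivalently a $^*$-subspace'' in the statement records.

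With this in hand the first assertion is immediate. If $\omega$ is an idempotent state that is tracial, then $\omega(a^*a)=\omega(aa^*)$ for every $a\in\alg$, so $a\in N_\omega$ if and only if $a^*\in N_\omega$. Thus $N_\omega$ is a $^*$-subspace, condition (iii) holds, and the implication (iii)$\Rightarrow$(i) of Theorem~\ref{thm:Haar-equiv} yields that $\omega$ is a Haar idempotent.

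For the second assertion, if $\alg$ is commutative then $ab=ba$ for all $a,b\in\alg$, so every functional on $\alg$ --- in particular every idempotent state --- is tracial, and the first part applies. When $\alg\cong C_0(G)$ for a locally compact group $G$, the compact quantum subgroups produced this way are precisely the compact subgroups of $G$ and the associated Haar idempotents are their normalised Haar measures, so one recovers the classical Kawada--It\^o theorem.

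There is essentially no obstacle here beyond recalling that $N_\omega$ is automatically a left ideal; the substantive work has already been carried out in Theorem~\ref{thm:Haar-equiv}, whose implication (iii)$\Rightarrow$(i) constructs the compact quantum subgroup $\blg$ and the morphism $\pi_\blg$ exhibiting $\omega$ as $h_\blg\circ\pi_\blg$.
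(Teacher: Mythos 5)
Your proposal is correct and follows exactly the route the paper intends: the corollary is stated there as an immediate consequence of the equivalence (i)$\Leftrightarrow$(iii) of Theorem~\ref{thm:Haar-equiv}, and your observation that $N_\omega$ is automatically a closed left ideal, so that traciality (giving $*$-closedness) suffices to verify (iii), is precisely the content of the parenthetical ``equivalently a $^*$-subspace'' in that theorem. The commutative case via ``every state is tracial'' is likewise the intended reading, so nothing further is needed.
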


The equivalence of conditions (i)$\Longleftrightarrow$(ii) is
essentially contained in \cite{S}; in the case of
von Neumann algebraic compact quantum groups it was shown in
\cite{Reiji}.

Below we identify the ideal $N_\omega$
appearing in Theorem~\ref{thm:Haar-equiv} with an ideal
constructed in \cite{S}, but
we need some terminology first.
Let $\clg$ be a right invariant C*-subalgebra of $\alg$.
We say that a nondegenerate representation $\rho$
of $\alg$ on a Hilbert space $H_\rho$ is \emph{$\clg$-trivial} if
\[
\rho(c) = \epsilon(c)1_{H_\rho}\qquad\text{for every $c$ in }\clg.
\]
Define $J_{\clg} = \bigcap_\rho \ker\rho$
where the intersection is taken over the equivalence classes of
nondegenerate $\clg$-trivial representations of $\alg$.

\begin{propn}
Let $\omega$ be a Haar idempotent and let $\blg$ be a compact quantum
subgroup of $\alg$ such that $\omega=h_{\blg} \circ \pi_{\blg}$.
%Let $J$ denote the ideal of $\alg$
%associated with the right invariant $C^*$-subalgebra
%$L_{\omega}(\alg)$.
Then $N_{\omega}=J_{\clg_\omega}$ where $\clg_\omega = L_\omega(\alg)$.
\end{propn}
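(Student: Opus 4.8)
The plan is to prove the two inclusions $N_\omega\subseteq J_{\clg_\omega}$ and $J_{\clg_\omega}\subseteq N_\omega$ separately, using the Haar idempotent structure $\omega = h_\blg\circ\pi_\blg$ together with Theorem~\ref{thm:Haar-equiv}, where it was shown that $N_\omega=\ker\pi_\omega$ for the quotient map $\pi_\omega:\alg\to\blg=\alg/N_\omega$, and that this quotient construction reproduces $\blg$ (the morphism $\pi_\omega$ can be identified with $\pi_\blg$ up to isomorphism). So throughout I would freely write $N_\omega=\ker\pi_\blg$.

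For the inclusion $J_{\clg_\omega}\subseteq N_\omega$: the idea is to exhibit a single nondegenerate $\clg_\omega$-trivial representation $\rho$ of $\alg$ whose kernel is exactly $N_\omega$, which immediately gives $J_{\clg_\omega}\subseteq\ker\rho=N_\omega$. The natural candidate is the composition of $\pi_\blg:\alg\to\blg$ with a faithful nondegenerate representation of $\blg$ (e.g.\ its GNS representation for the Haar state, or any faithful representation). First I would check $\clg_\omega$-triviality: for $c\in\clg_\omega=L_\omega(\alg)$ we have $c = L_\omega(c)$, and since $L_\omega = R_{\epsilon\circ L_\omega}$ by Lemma~\ref{invar} and $\epsilon\circ L_\omega=\omega$, the relation $(\pi_\blg\ot\pi_\blg)\Com = \Com_\blg\pi_\blg$ combined with the fact that $h_\blg$ is bi-invariant on $\blg$ yields $\pi_\blg(L_\omega(a)) = h_\blg(\pi_\blg(a))\,1_\blg = \omega(a)1_\blg = \epsilon(L_\omega(a))1_\blg$; hence $\pi_\blg(c)=\epsilon(c)1_\blg$ and $\rho(c)=\epsilon(c)1$. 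Nondegeneracy of $\rho$ is clear since $\pi_\blg$ is a surjective (hence nondegenerate) morphism and the chosen representation of $\blg$ is nondegenerate. Since $\pi_\blg$ has kernel $N_\omega$ (Theorem~\ref{thm:Haar-equiv}) and the representation of $\blg$ is faithful, $\ker\rho=N_\omega$, giving one inclusion.

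For the reverse inclusion $N_\omega\subseteq J_{\clg_\omega}$: I must show that every nondegenerate $\clg_\omega$-trivial representation $\rho$ of $\alg$ kills $N_\omega$. The key point is that such a $\rho$ factors through $\pi_\blg$, i.e.\ $\rho$ vanishes on $\ker\pi_\blg=N_\omega$. To see this, note that by Theorem~2 of \cite{S} (invoked in the proof of Theorem~\ref{thm:Haar-equiv}) the right invariant expected subalgebra $\clg_\omega$ together with the $\clg_\omega$-triviality condition is precisely what characterises representations factoring through the compact quantum subgroup $\blg$ — this is the defining property of the ideal $J_\clg$ in \cite{S}, and by Theorem~2 of \cite{S} one has $J_{\clg_\omega}=\ker\pi_\blg$. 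Combining with Theorem~\ref{thm:Haar-equiv} this gives $J_{\clg_\omega}=\ker\pi_\blg=N_\omega$ directly. The honest version of the argument: a $\clg_\omega$-trivial $\rho$ is, by definition, one where $\rho(c)=\epsilon(c)1$; this forces $\rho$ to descend to the quotient of $\alg$ by the smallest ideal making all of $\clg_\omega$ scalar, and \cite{S} identifies that quotient with $\blg$; hence $N_\omega=\ker\pi_\blg\subseteq\ker\rho$ for each such $\rho$, so $N_\omega\subseteq\bigcap_\rho\ker\rho=J_{\clg_\omega}$.

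The main obstacle I anticipate is making the factorisation argument in the second inclusion fully rigorous without simply quoting \cite{S} as a black box — one needs to verify that the universal $\clg_\omega$-trivial representation really coincides with $\pi_\blg$ composed with a faithful representation of $\blg$, rather than with something smaller. This amounts to checking that $\pi_\blg$ itself is $\clg_\omega$-trivial (done above) and that it is universal among such, i.e.\ that any $\clg_\omega$-trivial $\rho$ factors through it; the cleanest route is to cite the construction of $J_\clg$ and the correspondence in \cite{S}, Theorem~2, which was already used in the proof of Theorem~\ref{thm:Haar-equiv}, so that the present proposition becomes a short matter of assembling $J_{\clg_\omega}=\ker\pi_\blg$ (from \cite{S}) and $\ker\pi_\blg=N_\omega$ (from Theorem~\ref{thm:Haar-equiv}).
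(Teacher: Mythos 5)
Your reduction $N_\omega=\ker\pi_\blg$ is exactly the one the paper uses (it is the string of equivalences \eqref{string}, coming from the faithfulness of $h_\blg$), and your proof of the inclusion $J_{\clg_\omega}\subseteq N_\omega$ is correct and in fact more self-contained than the paper's argument: composing $\pi_\blg$ with a faithful nondegenerate representation of $\blg$ yields a nondegenerate $\clg_\omega$-trivial representation of $\alg$ with kernel $N_\omega$, and your verification $\pi_\blg(L_\omega(a))=(h_\blg\ot\id_\blg)\Com_\blg(\pi_\blg(a))=\omega(a)1_\blg=\epsilon(L_\omega(a))1_\blg$ is sound.

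The genuine gap is in the reverse inclusion $N_\omega\subseteq J_{\clg_\omega}$. Theorem~2 of \cite{S} does not say $J_{\clg_\omega}=\ker\pi_\blg$: it associates to the (symmetric, by Theorem~\ref{thm:Haar-equiv}) right invariant subalgebra $\clg_\omega$ \emph{some} compact quantum subgroup $\blg'$ with $J_{\clg_\omega}=\ker\pi_{\blg'}$, and a priori $\ker\pi_{\blg'}$ could be strictly smaller than $\ker\pi_\blg$, i.e.\ $\blg$ could be a proper quotient of $\blg'$; equivalently, there could be a nondegenerate $\clg_\omega$-trivial representation that does not factor through $\pi_\blg$. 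This is precisely the possibility you flag at the end (``rather than with something smaller'') and then dismiss by citing Theorem~2 again, which is circular --- $\clg_\omega$-triviality alone does not obviously force factorisation through $\pi_\blg$. What closes the gap in the paper is a uniqueness input: one first notes that the construction of Section~5 of \cite{S} attaches exactly $\clg_\omega$ to the given subgroup $\blg$, and then invokes Theorem~12 of \cite{S}, whose proof gives $\ker\pi_\blg=\ker\pi_{\blg'}$. Alternatively, you could argue as in the proof of (ii)$\Longrightarrow$(i) of Theorem~\ref{thm:Haar-equiv}: the Haar idempotent $\gamma=h_{\blg'}\circ\pi_{\blg'}$ satisfies $L_\gamma(\alg)=\clg_\omega$ by Theorem~11 of \cite{S}, hence $L_\gamma=L_\omega$ by the uniqueness of $\phi$-preserving conditional expectations (Lemma~\ref{condexp}), so $\gamma=\omega$ and $N_\omega=N_\gamma=\ker\pi_{\blg'}=J_{\clg_\omega}$. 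Either way, some identification of $\blg'$ with $\blg$ beyond Theorem~2 of \cite{S} is indispensable; as written, your second inclusion is not established.
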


\begin{proof}
By the construction in Section 5 of \cite{S},
the right invariant $C^*$-subalgebra associated with the
compact quantum subgroup $\blg$ is precisely $\clg_{\omega}$.
Then the ideal $J_{\clg_\omega}$,
again by construction, is the kernel of the map
$\pi_{\blg'}: \alg \to \blg'$, where $\blg'$
is the compact quantum subgroup associated with
$\clg_{\omega}$ \cite[Theorem~2]{S}.
Now the uniqueness result \cite[Theorem~12]{S} says
that $\blg$ is isomorphic to $\blg'$. In fact, as is shown
in the proof of  \cite[Theorem~12]{S}, $\ker \pi_\blg =
\ker\pi_{\blg'}$.
But $J_{\clg_\omega} = \ker\pi_{\blg'}$ and  $N_{\omega}= \ker\pi_{\blg'}$
by \eqref{string}, so it follows that $N_{\omega} = J_{\clg_\omega}$.
\end{proof}

The remainder of this section will be devoted to the extension of
Theorem~\ref{thm:unimodular-corr} to
non-unimodular locally compact quantum groups in the case of Haar
idempotents. The proof of Theorem \ref{thm:unimodular-corr} suggests
that the only missing ingredient needed for such an extension is the
fact that the conditional expectation $L_{\omega}$ preserves not only
the left Haar weight $\phi$, but also the right Haar
weight $\psi$. It is natural to expect that to show this we need to
exploit the modular element facilitating the passage between $\phi$
and $\psi$.

The proof will be split into a series of lemmas. The first one is probably
well known and does not involve any notions related to quantum groups.

\begin{lem} \label{GNSfaith}
Let $\dlg$ be a $C^*$-algebra, let $\rho \in \dlg^*$ be a state  and
let $(\pi, H_{\rho}, \Omega)$ be the GNS construction for $\rho$.
Denote by $\rho' \in (\pi(\dlg))^*$ the vector state associated with
$\Omega$ \textup{(}so that $\rho =\rho'\circ \pi$\textup{)}.
Then $\rho'$ is faithful if and only if the null space
$N_{\rho}:=\set{d \in \dlg}{\rho(d^*d)=0}$ is an ideal.
\end{lem}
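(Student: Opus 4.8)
The plan is to prove both implications directly from the definitions, exploiting the fact that $N_\rho$ is exactly the left kernel of the GNS construction. First recall the standard setup: $H_\rho$ is the completion of $\dlg/N_\rho$ with respect to the inner product $\langle d + N_\rho, e + N_\rho\rangle = \rho(e^*d)$, the representation $\pi$ acts by $\pi(a)(d+N_\rho) = ad + N_\rho$, and $\Omega = \lim_i (e_i + N_\rho)$ for an approximate unit $(e_i)_{i\in I}$ of $\dlg$ is the (possibly only asymptotically defined, if $\dlg$ is nonunital) cyclic vector with $\rho'(\pi(a)) = \langle \pi(a)\Omega, \Omega\rangle$. The key observation I would record at the outset is that for $a \in \dlg$ one has $\rho'(\pi(a)^*\pi(a)) = \|\pi(a)\Omega\|^2 = \rho(a^*a)$, so $\pi(a)\Omega = 0$ if and only if $a \in N_\rho$; more generally $\pi(a)(d+N_\rho) = 0$ for all $d$ iff $ad \in N_\rho$ for all $d \in \dlg$, i.e.\ iff $a$ lies in the largest two-sided ideal contained in $N_\rho$.

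For the implication ``$N_\rho$ an ideal $\Rightarrow$ $\rho'$ faithful'': suppose $x \in \pi(\dlg)_+$ with $\rho'(x) = 0$; write $x = \pi(a)^*\pi(a)$ is not available in general, but I can instead argue as follows. By the Cauchy--Schwarz inequality for the state $\rho'$, $\rho'(x) = 0$ forces $\rho'(y^*xy) = 0$ and in fact $x^{1/2}$ has $\rho'((x^{1/2})^*x^{1/2}) = 0$; since $x^{1/2} \in \pi(\dlg)$ (as $\pi(\dlg)$ is a $C^*$-algebra), write $x^{1/2} = \pi(b)$ for some $b \in \dlg$ — more carefully, pick a net $b_n \in \dlg$ with $\pi(b_n) \to x^{1/2}$ and pass to the limit, or just note $\rho'(x) = 0 \Rightarrow \|x^{1/2}\Omega\| = 0 \Rightarrow x^{1/2}\Omega = 0 \Rightarrow x\Omega = 0$. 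Now for any $a,d \in \dlg$, $\langle x\,\pi(a)(d+N_\rho), \pi(a')(d'+N_\rho)\rangle$: using that $N_\rho$ is an ideal, $\Omega$ is separating is what we want, but the direct route is: $x \pi(a)\Omega = \pi(a) x \Omega$ only if $x$ commutes with $\pi(a)$, which it need not. So instead I use density of $\pi(\dlg)\Omega$ in $H_\rho$: it suffices to show $x\pi(a)\Omega = 0$ for all $a$. Here $\|x\pi(a)\Omega\|$: we have $x \Omega = 0$, and I want to transfer this. Write $x^{1/2}\pi(a)\Omega = x^{1/2}(a+N_\rho) \cdot$— the cleanest argument: $\|x^{1/2}\pi(a)\Omega\|^2 = \langle \pi(a)^* x\, \pi(a)\Omega,\Omega\rangle = \rho'(\pi(a)^* x \pi(a))$. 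Now since $N_\rho$ is a two-sided ideal, the map $\dlg \ni a \mapsto a + N_\rho \in \dlg/N_\rho$ is a $^*$-algebra homomorphism onto a dense subalgebra, and $\rho'$ restricted there is a faithful trace-like— no, not a trace. Let me instead use the standard fact: when $N_\rho$ is a \emph{two-sided} ideal, $\dlg/N_\rho$ is a pre-$C^*$-algebra and its completion is a $C^*$-algebra $\blg$ on which $\rho$ induces a faithful state $\bar\rho$; the GNS representation of $(\blg,\bar\rho)$ is then \emph{faithful} because $\bar\rho$ is faithful (a faithful state on a $C^*$-algebra has faithful GNS representation, since $\ker\pi_{\bar\rho}$ is an ideal on which $\bar\rho$ vanishes). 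Identifying this with $(\pi,H_\rho,\Omega)$ and noting $\rho'$ is precisely the vector state $\bar\rho \circ (\text{can. iso})$ gives faithfulness of $\rho'$.

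For the converse ``$\rho'$ faithful $\Rightarrow$ $N_\rho$ an ideal'': this is immediate. For $a \in N_\rho$ and $d \in \dlg$, $\rho'(\pi(d a)^*\pi(da)) = \rho((da)^*(da)) = \|\pi(d)\pi(a)\Omega\|^2$, and $\pi(a)\Omega = 0$ since $\rho'(\pi(a)^*\pi(a)) = \rho(a^*a) = 0$ and $\rho'$ being faithful (hence the vector $\Omega$ separating for $\pi(\dlg)''$, in particular $\|\pi(a)\Omega\|^2 = \rho'(\pi(a^*a))$ implies... actually $\rho(a^*a)=0$ already gives $\pi(a)\Omega = 0$ directly from the inner product formula, no faithfulness needed). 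Wait — then $da \in N_\rho$ follows with no hypothesis, so $N_\rho$ is always a left ideal; the point is the \emph{right} ideal property, equivalently that $N_\rho = N_\rho^*$. So: $a \in N_\rho$ means $\rho'(\pi(a)\pi(a)^*)$ — no. Let me restate: $a \in N_\rho \iff \pi(a)\Omega = 0$; then $a^* \in N_\rho \iff \pi(a^*)\Omega = 0 \iff \pi(a)^*\Omega = 0 \iff \Omega \perp \pi(a)H_\rho$'s— rather $\iff \pi(a)^*\Omega = 0$. Now $\pi(a)\Omega = 0$ implies $\pi(a)^*\pi(a) = 0$ as an operator? No. But $\rho'$ faithful and $\pi(a)\Omega = 0$ gives $\rho'(\pi(a)\pi(a)^*) = \langle \pi(a)^*\Omega,\pi(a)^*\Omega\rangle$; I need another route: $\pi(a)\Omega = 0 \Rightarrow \pi(a)\pi(b)\Omega = \pi(ab)\Omega$, hmm that's not zero. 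The right argument: $\pi(a)\Omega = 0$, so $\pi(a)$ is a partial isometry-times-positive with $\pi(a) = u|\pi(a)|$; then $\pi(a)\Omega = 0 \iff |\pi(a)|\Omega = 0 \iff \rho'(|\pi(a)|^2) = \rho'(\pi(a)^*\pi(a)) = 0$, and $\rho'$ faithful then gives $\pi(a)^*\pi(a) = 0$, hence $\pi(a) = 0$, hence $\pi(a)^* = 0$, hence $\pi(a^*) = 0$ so $\pi(a^*)\Omega = 0$, i.e.\ $a^* \in N_\rho$. Combined with the always-true left-ideal property, $N_\rho$ is a two-sided $^*$-ideal.

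\textbf{Main obstacle.} The only delicate point is the forward direction (ii) $\Rightarrow$ faithfulness, where one must be careful that $\dlg/N_\rho$ is genuinely a $^*$-algebra (needs $N_\rho$ two-sided and self-adjoint — both follow from ``ideal'' in a $C^*$-algebra together with $N_\rho$ being automatically a left ideal) and that passing to its $C^*$-completion does not lose the faithfulness of the induced state; the nonunital case (approximate unit, $\Omega$ as a limit) requires the standard care but is routine. I would present the argument via the $C^*$-completion $\blg$ of $\dlg/N_\rho$ and the elementary fact that a faithful state on a $C^*$-algebra has faithful GNS representation.
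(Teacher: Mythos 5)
Your argument is correct, and your backward implication (faithfulness of $\rho'$ forces $N_\rho$ to be self-adjoint, which together with the automatic left-ideal property makes it two-sided) is essentially the paper's. For the forward implication you take a more structural route: you pass to the quotient $C^*$-algebra $\blg=\dlg/N_\rho$ (legitimate, since $N_\rho$ is automatically a norm-closed left ideal, so an ``ideal'' here is a closed two-sided, hence self-adjoint, ideal), note that $\rho$ descends to a faithful state $\bar\rho$ on $\blg$, identify the GNS triple of $\bar\rho$ with $(\pi,H_\rho,\Omega)$, and invoke the fact that a faithful state has a faithful GNS representation. The paper instead argues entirely inside $(\pi,H_\rho,\Omega)$: cyclicity of $\Omega$ gives $\pi(d)=0$ if and only if $\rho(c^*d^*dc)=0$ for all $c\in\dlg$, i.e.\ if and only if $dc\in N_\rho$ for all $c$, while $\rho'(\pi(d)^*\pi(d))=0$ if and only if $d\in N_\rho$; combining these two equivalences with the automatic left-ideal property settles both directions at once. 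Your route buys a reusable structural fact at the cost of bookkeeping the paper avoids: you must check that $\rho$ vanishes on $N_\rho$ and that $\bar\rho(\bar a^*\bar a)=\rho(a^*a)$ is well defined (both via Cauchy--Schwarz and an approximate unit, which you defer as routine). Also, the polar-decomposition detour in your backward direction is unnecessary: $\rho(a^*a)=0$ with $\rho'$ faithful gives $\pi(a^*a)=0$, hence $\pi(a)=0$, hence $\rho(aa^*)=\rho'\bigl(\pi(a)\pi(a)^*\bigr)=0$ directly.
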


\begin{proof}
Let $d \in \dlg$. On one hand we have the following string of equivalences:
\begin{align*}
\pi(d)=0 \;& \Longleftrightarrow \;
           \forall_{c \in \dlg}\; \pi(d)\pi(c)\Omega = 0
   \; \Longleftrightarrow \;\forall_{c \in \dlg}\;
    \langle \Omega, \pi(c)^* \pi(d)^* \pi(d) \pi(c) \Omega \rangle = 0 \\
   \; &\Longleftrightarrow \; \forall_{c  \in \dlg}\;\rho(c^*d^*dc) = 0
\; \Longleftrightarrow \;\forall_{c \in \dlg}\; dc \in N_{\rho}.
\end{align*}
On the other hand $\rho'(\pi(d)^*\pi(d) ) = \rho(d^*d) = 0$
if and only if $d \in N_{\rho}$.
\end{proof}

The next lemma will be used to show that Haar idempotents are invariant
under the suitably understood action of the modular element.
Classically this corresponds to the fact that
the modular function of a locally compact group $G$
is the constant function $1$ when restricted to any compact subgroup of $G$
(because it is a continuous homomorphism into the non-negative reals).

\begin{lem} \label{affiliateddensity}
Let $\omega \in \alg^*$ be a Haar idempotent and let
$(\pi,H_{\omega}, \Omega)$ be the GNS construction for $\omega$. Then
the  $C^*$-algebra $\pi(\alg)$ is unital. If\/ $t$ is a
positive self-adjoint operator affiliated with $\alg$ such that
$\Com(t)= t \ot t$, then the positive functional  $\omega_t \in
\alg^*$ defined by the formula
\begin{equation} \label{omegat}
\omega_t(a) = \langle \Omega, \pi(t) \pi(a) \pi(t) \Omega \rangle,
\qquad a \in \alg,
\end{equation}
is either zero or an idempotent state.
Moreover, if $t$ is strictly positive,
then actually $\pi(t) = 1$ and $\omega_t = \omega$.
\end{lem}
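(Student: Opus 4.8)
The plan is to treat the three assertions in turn, reusing the machinery already set up for Haar idempotents. First, the fact that $\pi(\alg)$ is unital: since $\omega$ is a Haar idempotent, Theorem~\ref{thm:Haar-equiv} tells us that $N_\omega$ is an ideal, and then Lemma~\ref{GNSfaith} shows that the vector state $\omega'$ on $\pi(\alg)$ induced by $\Omega$ is faithful. But the GNS construction for $\omega$ factors through the quotient $\blg = \alg/N_\omega$, which is a compact (hence unital) quantum group by the proof of (iii)$\Rightarrow$(i) in Theorem~\ref{thm:Haar-equiv}; concretely $\pi = \pi_\omega'\circ\pi_\omega$ where $\pi_\omega':\blg\to B(H_\omega)$ is the (faithful) GNS representation of the Haar state $h_\blg=\mu$ of $\blg$, so $\pi(\alg)=\pi_\omega'(\blg)$ is unital with unit $\pi_\omega'(1_\blg)$. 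I will spell this identification out, as it is needed throughout.

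Next, given a positive self-adjoint $t$ affiliated with $\alg$ with $\Com(t)=t\ot t$, I want to understand $\pi(t)$. Applying the nondegenerate $^*$-homomorphism $\pi_\omega:\alg\to\blg$ to the affiliated element $t$ (using the functoriality of affiliated elements recalled in the Preliminaries, equation \eqref{eq:homo-aff}), we get an affiliated element $\pi_\omega(t)$ of $\blg$; since $\blg$ is unital, $\pi_\omega(t)\in\blg$ is an honest bounded positive element. Applying $(\pi_\omega\ot\pi_\omega)$ to $\Com(t)=t\ot t$ and using \eqref{copb} gives $\Com_\blg(\pi_\omega(t))=\pi_\omega(t)\ot\pi_\omega(t)$, i.e. $\pi_\omega(t)$ is a positive group-like element of the compact quantum group $\blg$. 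For such an element, applying the counit $\epsilon_\blg$ of $\blg$ to $\Com_\blg(\pi_\omega(t))=\pi_\omega(t)\ot\pi_\omega(t)$ yields $\epsilon_\blg(\pi_\omega(t))\pi_\omega(t)=\pi_\omega(t)$, so $\epsilon_\blg(\pi_\omega(t))$ is either $0$ (forcing $\pi_\omega(t)=0$) or $1$. Set $q=\pi_\omega'(\pi_\omega(t))=\pi(t)$ (here I use \eqref{eq:fc-comm}/Theorem~1.2 of \cite{Woraff} to commute $\pi$ past the affiliated element); then $\omega_t(a)=\langle\Omega,\pi(t)\pi(a)\pi(t)\Omega\rangle=\mu(\pi_\omega(t)\,\cdot\,\pi_\omega(t))$ pulled back along $\pi_\omega$. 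When $\pi_\omega(t)=0$ this is the zero functional. Otherwise, the computation $\omega_t\star\omega_t=\omega_t$ reduces — exactly as in the compact case, using $\Com_\blg$-invariance of $\mu=h_\blg$ and group-likeness of $\pi_\omega(t)$ — to showing that the functional $b\mapsto h_\blg(\pi_\omega(t)\,b\,\pi_\omega(t))$ on $\blg$ is an idempotent state; positivity and normalisation ($\epsilon_\blg(\pi_\omega(t))=1$ together with $h_\blg$ being a state) are immediate, and idempotency follows by slicing $\Com_\blg(b)$ and using right/left invariance of $h_\blg$ after pushing one copy of $\pi_\omega(t)$ through the coproduct via $\Com_\blg(\pi_\omega(t))=\pi_\omega(t)\ot\pi_\omega(t)$.

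Finally, if $t$ is strictly positive then $\pi_\omega(t)$ is a strictly positive element of $\blg$ (by functoriality of affiliated elements and Proposition~7.11 of \cite{kus:funct-calc}), hence in particular nonzero and injective with dense range; being also group-like it is invertible in $\blg$ (from $\Com_\blg(\pi_\omega(t))=\pi_\omega(t)\ot\pi_\omega(t)$ one extracts an inverse by applying the antipode $S_\blg$, which is defined and bounded on the compact quantum group $\blg$: $\pi_\omega(t)S_\blg(\pi_\omega(t))=\epsilon_\blg(\pi_\omega(t))1_\blg=1_\blg$). A positive invertible group-like element of a compact quantum group must equal $1_\blg$: indeed $\pi_\omega(t)$ generates a one-parameter group of group-like unitaries $\pi_\omega(t)^{is}$, and boundedness of all of them (they live in the unital $C^*$-algebra $\blg$) forces $\pi_\omega(t)=1_\blg$ — alternatively, apply the Haar state: $h_\blg$ is $S_\blg$-invariant and, by Proposition~\ref{w=wS} transported to $\blg$, one gets $h_\blg(\pi_\omega(t))h_\blg(\pi_\omega(t)) = h_\blg(\pi_\omega(t))$ while also $h_\blg(\pi_\omega(t)) = h_\blg(S_\blg(\pi_\omega(t))) = h_\blg(\pi_\omega(t)^{-1})$, and Cauchy–Schwarz in the GNS space of $h_\blg$ then pins $\pi_\omega(t)$ to a scalar, which must be $1$. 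Hence $\pi(t)=\pi_\omega'(\pi_\omega(t))=\pi_\omega'(1_\blg)=1$ and $\omega_t(a)=\langle\Omega,\pi(a)\Omega\rangle=\omega(a)$.

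I expect the main obstacle to be the careful handling of the affiliated element $t$ under the nondegenerate homomorphism $\pi_\omega$: one has to be sure that $\pi_\omega(t)$ is a genuine element of the unital $\blg$ (not merely affiliated), that $(\pi_\omega\ot\pi_\omega)\Com(t)=\Com_\blg(\pi_\omega(t))$ holds as an identity of affiliated elements/bounded elements, and that $\pi = \pi_\omega'\circ\pi_\omega$ intertwines the functional calculus, so that $\omega_t$ really is the pull-back along $\pi_\omega$ of the functional $b\mapsto h_\blg(\pi_\omega(t)b\pi_\omega(t))$. All of this is available from \cite{Woraff}, \cite{WorNap}, \cite{kus:funct-calc} as recalled in the Preliminaries, but the bookkeeping needs to be done with some care; once the problem has been transported into the compact quantum group $\blg$, the remaining arguments are the standard ones from the compact case \cite{FST}.
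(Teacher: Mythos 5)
Your reduction to the compact quantum subgroup is a genuinely different route from the paper's: the paper never leaves $\pi(\alg)$ --- it exhibits the unit as $\pi(L_\omega(e))$, proves $\omega_t\star\omega_t=\omega_t$ by an unbounded computation on the dense set $\{a^*a : a\in D_t\}$, $D_t=\{a\in\alg : at \mbox{ bounded}\}$, and settles the strictly positive case by showing $\omega'(\pi(t)^2)=1$ (using an element mapping to $\pi(t)^{-1}$), repeating the whole argument with $t^2$ in place of $t$ to get $\omega'(\pi(t)^4)=1$, and invoking faithfulness of $\omega'$ (Lemma \ref{GNSfaith}) to conclude $\pi(t)^2=1$. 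Your transport of everything into $\blg=\alg/N_\omega$, where $s:=\pi_\omega(t)$ is a bounded positive element with $\Com_\blg(s)=s\ot s$, does make the convolution computation elementary, and the identification $\omega_t=\nu_s\circ\pi_\omega$ with $\nu_s(b)=h_\blg(sbs)$ is sound (modulo the affiliated-element bookkeeping you flag, which the paper also needs). One misstep already here: normalisation of $\nu_s$ is not ``immediate from $\epsilon_\blg(s)=1$'' --- the counit value says nothing about $h_\blg(s^2)$; the correct argument is that idempotency gives $h_\blg(s^2)^2=h_\blg(s^2)$, and faithfulness of $h_\blg$ together with $s\neq 0$ forces $h_\blg(s^2)=1$. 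This is easily repaired, and up to this point your argument establishes ``zero or an idempotent state''.

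The genuine gap is in the strictly positive endgame. First, the antipode of a compact quantum group is in general an unbounded, densely defined map (it is bounded only in the Kac case), so the assertion that $S_\blg$ ``is defined and bounded on $\blg$'' is false, and you have not shown $s\in\dom(S_\blg)$; also Proposition 7.11 of \cite{kus:funct-calc} concerns powers, not images under morphisms. Invertibility of $s$ should instead be obtained as in the paper: $t^{-1}$ is strictly positive and affiliated with $\alg$, so $\pi_\omega(t^{-1})\in\blg$ and it inverts $s$ (Proposition 6.17 of \cite{kus:funct-calc}). Second, the argument that the ``group-like unitaries $s^{is}$ are bounded, forcing $s=1_\blg$'' proves nothing: unitaries automatically have norm one, and, unlike the classical picture, there is no a priori bound on $\|s^n\|$ to exploit. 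Your alternative sketch contains the right idea but again leans on unjustified antipode facts. A repair inside your own framework: both $s$ and $s^{-1}=\pi_\omega(t^{-1})$ are nonzero positive elements with $\Com_\blg(s)=s\ot s$ and $\Com_\blg(s^{-1})=s^{-1}\ot s^{-1}$, so invariance of $h_\blg$ gives $h_\blg(s)^2=h_\blg(s)$ and $h_\blg(s^{-1})^2=h_\blg(s^{-1})$, whence by faithfulness $h_\blg(s)=h_\blg(s^{-1})=1$; then $h_\blg\bigl((s^{1/2}-s^{-1/2})^2\bigr)=h_\blg(s)-2+h_\blg(s^{-1})=0$, so $s=1_\blg$ by faithfulness, i.e.\ $\pi(t)=1$ and $\omega_t=\omega$. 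Alternatively, run your idempotency argument for both $t$ and $t^2$ and use faithfulness of $h_\blg$, which is exactly the paper's device.
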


\begin{proof}
Similarly to the previous lemma let $\omega'\in \pi(\alg)^*$ be such that
$\omega = \omega' \circ \pi$.  Choose $e\in \alg_+$ such that $\omega(e)=1$.
We claim that $\pi (L_{\omega}(e))$ is a unit for $\pi(\alg)$. Indeed,
by the implication (i)$\Longrightarrow$(iii) of Theorem  \ref{thm:Haar-equiv} and the previous lemma it suffices to show that both
$\pi (L_{\omega}(e) a)- \pi(a)$ and $\pi (a L_{\omega}(e) ) - \pi(a)$
belong to $\set{b \in \pi(\alg)}{\omega'(b^*b)=0}$
for arbitrary $a \in \alg$. This however can be established exactly as
in the proof of the implication (iii)$\Longrightarrow$(i) in
Theorem~\ref{thm:Haar-equiv}.

Since $\pi$ is a nondegenerate $^*$-homo\-mor\-phism and
$t$ is affiliated with $\alg$, it follows
that $\pi(t)$ is affiliated with $\pi(\alg)$.
But $\pi(\alg)$ is unital, so $\pi(t)\in \alg$.
In particular $\pi(t)$ is bounded, and so the formula \eqref{omegat}
defines a bounded functional on $\alg$.

Let $D_t = \set{a \in \alg}{at \textrm{ is bounded}}$.
Let us note that since $t$ is positive, $D_t = (\dom(t))^*$.
Indeed, suppose first that $at$ is bounded.
For every $b$ in $\dom(t)$ we have $a(tb) = (at)b$.
Therefore, by definition, $a^*\in\dom(t^*)$ and $t^*a^* = (at)^*$.
Since $t = t^*$, we have $ta^* = (at)^*$.
Conversely, suppose that $a^*\in\dom(t)$.  Then for every $b$
in $\dom(t)$  we have $a(tb) = (t a^*)^*b$, so $at = (ta^*)^*$
on $\dom(t)$. Hence $at$ is bounded.

For $a\in D_t$ we have
\[
\begin{split}
(\omega_t \star \omega_t) (a^*a)
&= \langle \Omega \ot \Omega, (\pi \ot \pi) (t \ot t) (\pi \ot \pi)
  (\Com(a^*a)) (\pi \ot \pi) (t \ot t) (\Omega \ot \Omega) \rangle\\
&= \pair{\Omega\ot\Omega, (\pi \ot \pi)(\Com(t))
     (\pi\ot\pi)(\Com(a^*a))(\pi\ot\pi) (\Com(t)) (\Omega \ot \Omega)}.
\end{split}
\]
By \eqref{eq:homo-aff},
\[
(\pi \ot \pi)(\Com(t))\bigl((\pi\ot\pi)(\Com(a^*))u\bigr)
= (\pi \ot \pi)(\Com(ta^*))u
\]
for every $u\in\pi(\alg)\ot\pi(\alg)$. On both sides of the equation
we have bounded operators evaluated at $u$ so
\[
(\pi \ot \pi)(\Com(t))(\pi\ot\pi)(\Com(a^*)) = (\pi\ot\pi)(\Com(ta^*))
\]
in $\pi(\alg)\ot\pi(\alg)$. It also follows that
\[
(\pi\ot\pi)(\Com(a))(\pi \ot \pi)(\Com(t)) = (\pi\ot\pi)(\Com(at)).
\]
Inserting these into the expansion of
$(\omega_t \star \omega_t)(a^*a)$, we have
\begin{align*}
(\omega_t \star \omega_t) (a^*a)
&= \pair{\Omega \ot \Omega, (\pi\ot\pi)(\Com(ta^*at))(\Omega\ot\Omega)}
= (\omega \ot \omega) (\Com(ta^*at)) \\
&= \omega (ta^*at) = \pair{\Omega, \pi(ta^*at)\Omega} = \omega_t (a^*a)
\end{align*}
As $\set{a^*a}{a\in D_t}$ is dense in $\alg_+$, by linearity and continuity we
deduce that $\omega_t \star \omega_t = \omega_t$. Hence $\omega_t$ is
either $0$ or a state.

Suppose now that $t$ is strictly positive. Then $t^{-1}$ is a
strictly positive operator affiliated with $\alg$.
Hence  $\pi(t^{-1})\in\pi(\alg)$ so it follows that $\pi(t)$ is invertible
with an inverse in $\pi(\alg)$. Let then $a\in \alg$ be such that
$\pi(a) = \pi(t^{-1})$. Then $\omega_t(a^2) = \omega'(1) = 1$.
So $\omega_t$ is nonzero and hence a state. Therefore
$\omega'(\pi(t)^2) =  \omega_t(1) = 1$.

By functional calculus, $t^2$ is a strictly positive element
affiliated with $\alg$. Moreover
\[
\Com(t^2) = \Com(t)^2 = (t\ot t)^2 = t^2\ot t^2
\]
by \eqref{eq:fc-comm} and \cite[Proposition~13.16]{kus:funct-calc}.
It follows that the preceding argument can be applied to
$t^2$ instead of $t$, and so $\omega'(\pi(t)^4)=1$. Hence
\[
\omega'\bigl( (\pi(t)^2 - 1)^* (\pi(t)^2 -1) \bigr) = 0,
\]
and by the faithfulness of $\omega'$ (Lemma \ref{GNSfaith}) it follows
that $\pi(t)^2=1$. Positivity and uniqueness of square roots imply
that $\pi(t)=1$ and $\omega_t =\omega$.
\end{proof}

\begin{propn} \label{rightHaarpres}
Let $\omega \in \alg^*$ be a Haar idempotent. The conditional
expectation $L_{\omega}$ preserves the right Haar weight.
\end{propn}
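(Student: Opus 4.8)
The goal is to show that $L_\omega(\Mpsi^+)\sub\Mpsi^+$ and $\psi\circ L_\omega=\psi$ on $\Mpsi^+$. By Theorem~\ref{idempcondex} and Proposition~\ref{id-sub}, $L_\omega$ is already known to be a conditional expectation onto $\clg_\omega$ that preserves the \emph{left} Haar weight $\phi$, while the two Haar weights are linked by the modular element through $\psi(a)=\phi(\delta^{1/2}a\delta^{1/2})$. So the plan is to reduce everything to the single identity
\[
L_\omega\bigl(\delta^{1/2}a\delta^{1/2}\bigr)=\delta^{1/2}L_\omega(a)\delta^{1/2}.
\]
Granting this, the conclusion is immediate: for $a\in\Mpsi^+$ one has $\delta^{1/2}a\delta^{1/2}\in\Mphi^+$ with $\phi(\delta^{1/2}a\delta^{1/2})=\psi(a)$, hence $\phi(\delta^{1/2}L_\omega(a)\delta^{1/2})=\phi\bigl(L_\omega(\delta^{1/2}a\delta^{1/2})\bigr)=\phi(\delta^{1/2}a\delta^{1/2})=\psi(a)<\infty$ because $L_\omega$ preserves $\phi$, and then the converse direction of the modular relation (valid as an identity of weights, see \cite{KV}) gives $L_\omega(a)\in\Mpsi^+$ and $\psi(L_\omega(a))=\psi(a)$.

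To obtain the displayed identity I would first record that $\Com(\delta)=\delta\ot\delta$ (see \cite{KV}), whence by the functional calculus for affiliated elements --- equation~\eqref{eq:fc-comm} together with \cite[Proposition~13.16]{kus:funct-calc}, exactly as applied to $t^2$ in the proof of Lemma~\ref{affiliateddensity} --- one gets $\Com(\delta^{1/2})=(\delta\ot\delta)^{1/2}=\delta^{1/2}\ot\delta^{1/2}$. Since $\delta^{1/2}$ is again strictly positive and affiliated with $\alg$, Lemma~\ref{affiliateddensity} applies verbatim with $t=\delta^{1/2}$ and yields $\omega_{\delta^{1/2}}=\omega$ (and $\pi(\delta^{1/2})=1$ in the GNS representation of $\omega$). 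This is the only place the Haar property of $\omega$ is used, and it plays the role of the classical fact that the modular function of $G$ is identically $1$ on compact subgroups.

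The computation itself would then run, for $a$ in a suitable dense set, by writing $\delta^{1/2}\ot\delta^{1/2}=(1\ot\delta^{1/2})(\delta^{1/2}\ot1)$ and using $\Com(\delta^{1/2}a\delta^{1/2})=(\delta^{1/2}\ot\delta^{1/2})\Com(a)(\delta^{1/2}\ot\delta^{1/2})$:
\begin{align*}
L_\omega\bigl(\delta^{1/2}a\delta^{1/2}\bigr)
&=(\omega\ot\id_{\alg})\bigl[(1\ot\delta^{1/2})(\delta^{1/2}\ot1)\Com(a)(\delta^{1/2}\ot1)(1\ot\delta^{1/2})\bigr]\\
&=\delta^{1/2}\,(\omega\ot\id_{\alg})\bigl[(\delta^{1/2}\ot1)\Com(a)(\delta^{1/2}\ot1)\bigr]\,\delta^{1/2}\\
&=\delta^{1/2}\,(\omega_{\delta^{1/2}}\ot\id_{\alg})(\Com(a))\,\delta^{1/2}\\
&=\delta^{1/2}\,(\omega\ot\id_{\alg})(\Com(a))\,\delta^{1/2}=\delta^{1/2}L_\omega(a)\delta^{1/2},
\end{align*}
where the second equality pulls the central multiplier $1\ot\delta^{1/2}$ out through the slice map $\omega\ot\id_{\alg}$, the third identifies the functional $b\mapsto\omega(\delta^{1/2}b\delta^{1/2})$ with $\omega_{\delta^{1/2}}$, and the fourth uses $\omega_{\delta^{1/2}}=\omega$. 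By density, strictness of the maps involved and lower semicontinuity of the weights this extends to all of $\Mpsi^+$, which is exactly what is needed.

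The main obstacle is purely technical rather than conceptual: every displayed line manipulates the \emph{unbounded} affiliated element $\delta^{1/2}$, so one must justify that $\Com$ and the slice map $\omega\ot\id_{\alg}$ behave as written on products with $\delta^{1/2}$, and that an identity proved on a core of $\delta^{1/2}$ really does extend to all of $\Mpsi^+$. I would handle this with the same approximation arguments as in the proof of Lemma~\ref{affiliateddensity} --- working first with $a\in D_{\delta^{1/2}}$, invoking \eqref{eq:homo-aff} to legitimise $\Com(\delta^{1/2}a\delta^{1/2})=\Com(\delta^{1/2})\Com(a)\Com(\delta^{1/2})$, and then passing to the limit --- and this bookkeeping, not any new idea, is where the real work lies.
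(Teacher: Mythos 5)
Your route to the key identity is essentially the paper's: you use $\Com(\delta^{1/2})=\delta^{1/2}\ot\delta^{1/2}$ and Lemma~\ref{affiliateddensity} with $t=\delta^{1/2}$ (so $\pi(\delta^{1/2})=1$ in the GNS representation of $\omega$) to commute $L_\omega$ past $\delta^{1/2}\,\cdot\,\delta^{1/2}$, which is exactly the content of identity \eqref{eq:L-inv} in the paper, and the unbounded-operator bookkeeping you defer (working on the dense left ideal $D_{\delta^{1/2}}^{\phi}$, using \eqref{eq:homo-aff} and closures $\overline{a\delta^{1/2}}$, and \cite[Corollary~8.35]{kus:kms} to make $\psi=\phi(\delta^{1/2}\cdot\,\delta^{1/2})$ rigorous) is indeed carried out there along the lines you indicate.

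However, your final step is a genuine gap, not bookkeeping. What the identity gives you is $\psi(L_\omega(a^*a))=\psi(a^*a)$ for $a$ in the dense left ideal $D_{\delta^{1/2}}^{\phi}$, i.e.\ agreement of the two weights $\psi$ and $\psi\circ L_\omega$ on a dense subcone of $\alg_+$. Your claim that ``density, strictness and lower semicontinuity'' then extend this to all of $\Mpsi^+$ does not work: lower semicontinuous densely defined weights are \emph{not} determined by their values on a dense hereditary subcone (already on $C_0(\mathbb{R})$, Lebesgue measure and Lebesgue measure plus a point mass at $0$ agree on the dense ideal of functions vanishing at $0$), and there is no norm-continuity to appeal to. Likewise, for a general $a\in\Mpsi^+$ the expression $\delta^{1/2}a\delta^{1/2}$ is not a well-defined element of $\alg$, so the ``converse direction of the modular relation'' you invoke has no literal meaning outside the nice dense set. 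The paper closes precisely this gap by a different argument: it shows that $\gamma:=\psi\circ L_\omega$ is a nonzero, densely defined, lower semicontinuous weight which is \emph{right invariant} (using $L_\omega R_\eta=R_\eta L_\omega$ together with the strong right invariance of $\psi$), and then invokes the uniqueness of the right Haar weight \cite[Theorem~7.15]{KV} to conclude $\gamma=c\psi$, with $c=1$ because the two weights agree on the dense set above. Some such uniqueness or invariance argument is an essential missing ingredient in your proposal.
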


\begin{proof}
Denote the modular element affiliated with $\alg$ by $\delta$. To
simplify the notation we write $x:=\delta^{\frac{1}{2}}$. So $x$ is
defined by functional calculus of affiliated elements
and is itself affiliated with $\alg$. We need to show that the
completely positive operator $L_{\omega}:\alg \to \alg$ preserves
the right invariant weight $\psi$. Our approach is based on the fact
that formally
\[
\psi = \phi(x\cdot x).
\]

Following Kustermans \cite[Section~8]{kus:kms} and Vaes \cite[p.\ 325]{Vaesthesis}, put
\[
D_x^{\phi} = \set{a\in \alg}{ax\text{ is bounded and }\overline{ax}\in \Nphi}.
\]
(The overline denotes closure.) Then $D_x^{\phi}$ is a dense left ideal in $\alg$
\cite[Result 8.6]{kus:kms}. Let $a\in D_x^{\phi}$. It follows from
\cite[Corollary 8.35]{kus:kms} that $a\in \Npsi$ (because
$\psi = \phi(x\cdot x)$). Moreover, $xa^* = (ax)^*\in \Nphi^*$
(in particular, $xa^*$ is defined everywhere and is in $\alg$).

We shall show that any $a$ in $D_x^{\phi}$ satisfies
\begin{equation} \label{eq:L-inv}
\overline{x L_\omega(a^*a)x} = L_{\omega}(\overline{ax}^*\overline{ax})
\end{equation}
In particular, $L_\omega(a^*a)\in \Mpsi^+$ because $\overline{ax}^*\overline{ax}\in \Mphi$ (by \cite[Corollary
8.35]{kus:kms} again). Then we may calculate
\begin{align*}
\psi(a^*a) = \phi(\overline{xa^*ax}) = \phi(\overline{ax}^*\overline{ax}) =
\phi(L_\omega(\overline{ax}^*\overline{ax}))
= \phi(\overline{x L_\omega(a^*a)x}) = \psi(L_\omega(a^*a)).
\end{align*}
Note that the elements of the form $a^*a$, $a \in D_{x}^{\phi}$, are dense in $\alg_+$, and so it follows that $\gamma:=\psi\circ
L_\omega$ is a nonzero, densely defined, lower semicontinuous weight on $\alg$. We show next that $\gamma$ is
right invariant. For any $a \in \mathfrak{M}_{\gamma}$ and a state $\eta \in \alg_+^*$, we have
\[
\gamma \bigl((\id_\alg \ot \eta)(\Com(a))\bigr) = \psi (L_{\omega}R_{\eta}(a)) =
\psi(R_{\eta}L_{\omega}(a))
\]
Now we use the fact that for invariant weights the invariance holds in
a strong sense: $\psi (R_{\eta}(x)) = \psi(x)$ for all $x \in \alg_+$.
So we continue:
\[
\gamma \bigl((\id_\alg \ot \eta)(\Com(a))\bigr)= \psi (L_{\omega}(a))= \gamma (a).
\]
By the uniqueness of right Haar weight \cite[Theorem~7.15]{KV}, $\gamma$ is a
scalar multiple of $\psi$. The scalar has to be
$1$ because $\gamma$ and $\psi$ coincide on a dense subset. Therefore
$L_\omega$ preserves the right Haar weight.

We proceed to prove the identity \eqref{eq:L-inv}.
Since the modular element $\delta$ is strictly positive and
$\Com(\delta) = \delta\ot\delta$ \cite[Proposition~7.12]{KV},
it follows from functional calculus and \cite[Proposition~13.16]{kus:funct-calc} that
\[
\Com(\delta^z) = \delta^z\ot\delta^z
\]
for any $z$ in $\complex$.
Let $(\pi, H_{\omega}, \Omega)$ be the GNS construction
for $\omega$ and let $\omega'\in \pi(\alg)^*$ the vector
state associated with $\Omega$.
Then Lemma~\ref{affiliateddensity} implies that
\[
\omega(a) = \omega'(\pi(x)\inv\pi(a)\pi(x)\inv)\qquad(a\in \alg),
\]
To make the presentation more transparent, we
denote below the functional defined by the
right-hand side of the identity above as $\widetilde{\omega}$.

Let $a\in D_x^{\phi}$. As noted, $xa^*ax$ is bounded and densely defined: its
extension is $(xa^*)\overline{ax} = (xa^*)(xa^*)^*$. Now
\begin{equation}\label{eq:start}
\begin{split}
&L_{\widetilde \omega}(\overline{xa^*ax})
=L_{\widetilde \omega}(xa^*\overline{ax})\\
&\quad= (\omega'\otimes \id_\alg)\bigl((\pi(x)\inv\ot 1_\alg)
     (\pi\ot\id_\alg)(\cop(xa^*\overline{ax})) (\pi(x)\inv\ot 1_\alg)\bigr)\\
&\quad= (\omega'\otimes \id_\alg)\bigl((\pi(x)\inv\ot 1_\alg)
  (\pi\ot\id_\alg)(\cop(xa^*))(\pi\ot\id_\alg)(\cop(\overline{ax}))
 (\pi(x)\inv\ot 1_\alg)\bigr)
\end{split}
\end{equation}

Now both $\cop$ and $\pi\ot\id_\alg$ are nondegenerate $^*$-homo\-mor\-phisms
so $(\pi\ot\id_\alg)\cop(x)$ is affiliated with $\pi(\alg)\ot \alg$ and
\[
(\pi\otimes\id_\alg)(\cop(xa^*))u =
(\pi\otimes\id_\alg)(\cop(x))((\pi\otimes \id_\alg)\cop(a^*)u)
\]
for every $u$ in $\pi(\alg)\otimes \alg$ (see \eqref{eq:homo-aff}).
Since the linear span of $(\pi(\alg)\otimes \alg)(H_\omega\ot H)$ is dense in
$H_\omega\ot H$, it follows that
\[
(\pi\otimes\id_\alg)(\cop(xa^*)) =
(\pi\otimes\id_\alg)(\cop(x))(\pi\otimes \id_\alg)(\cop(a^*))
\]
in $B(H_\omega\ot H)$.

Moreover,
\[
(\pi\otimes\id_\alg)(\cop(x)) = \pi(x)\otimes x
\]
because $\cop(x)= x\otimes x$ (apply Theorem~6.1 of \cite{WorNap} and
Theorem 1.2 of \cite{Woraff}). So we have
\[
(\pi\otimes\id_\alg)(\cop(xa^*)) =
(\pi(x)\otimes x)((\pi\otimes \id_\alg)\cop(a^*)).
\]

Finally,
\begin{equation}\label{eq:xa*}
(\pi(x)\inv\ot 1_\alg)(\pi\otimes\id_\alg)(\cop(xa^*))
= (1_\alg\otimes x)((\pi\otimes \id_\alg)\cop(a^*)).
\end{equation}

On the other hand, since $\overline{ax} = (xa^*)^*$,
\[
(\pi\ot\id_\alg)(\cop(\overline{ax})) (\pi(x)\inv\ot 1_\alg) =
\bigl( (\pi(x)\inv\ot 1_\alg)(\pi\otimes\id_\alg)(\cop(xa^*)) \bigr)^*,
\]
which is, by \eqref{eq:xa*}, the closure of the  bounded densely
defined operator
\[
(\pi\otimes \id_\alg)(\cop(a))(1_\alg\otimes x).
\]

Continuing from \eqref{eq:start}, we have
\begin{align*}
L_{\widetilde \omega}(\overline{xa^*ax})
&=(\omega'\ot\id_\alg)\bigl((1_\alg\ot x)((\pi\ot\id_\alg)(\cop(a^*))
                  \overline{(\pi\ot \id_\alg)(\cop(a))(1_\alg\ot x)}
                              \bigr)\\
&= \overline{x(\omega'\ot\id_\alg)\bigl((\pi\ot\id_\alg)(\cop(a^*))
                            (\pi\ot \id_\alg)(\cop(a))\bigr)x}\\
&= \overline{x(\omega'\ot \id_\alg)\bigl((\pi\ot\id_\alg)(\cop(a^*a))\bigr)x}\\
&= \overline{x L_\omega(a^*a)x}.
\end{align*}
Therefore \eqref{eq:L-inv} holds.
\end{proof}

\begin{tw} \label{Haarcorresp}
Let $\alg$ be a coamenable locally compact quantum group.
There is an order-preserving one-to-one correspondence between Haar
idempotents $\omega$ on $\alg$ and symmetric,
right invariant, expected $C^*$-subalgebras $\clg$ of $\alg$. The
correspondence is given by
\[
\clg_\omega = L_\omega(\alg),\qquad \omega_\clg = \Cou\circ E_\clg,
\]
where $E_\clg$ denotes the conditional expectation onto $\clg$ and
$\Cou$ the counit of $\alg$.
\end{tw}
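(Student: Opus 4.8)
The plan is to assemble Theorem~\ref{Haarcorresp} from the results already established, the two decisive inputs being Proposition~\ref{rightHaarpres} (for a Haar idempotent $\omega$ the conditional expectation $L_\omega$ also preserves the right Haar weight $\psi$) and Theorem~\ref{thm:Haar-equiv} (a Haar idempotent is precisely one for which $\clg_\omega=L_\omega(\alg)$ is symmetric). I would introduce the two assignments $\omega\mapsto\clg_\omega:=L_\omega(\alg)$ and $\clg\mapsto\omega_\clg:=\Cou\circ E_\clg$, check that each lands in the asserted class, that they are mutually inverse, and that they respect the orders.

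First I would handle $\omega\mapsto\clg_\omega$. For a Haar idempotent $\omega$, Theorem~\ref{idempcondex} makes $L_\omega$ a conditional expectation onto the $C^*$-subalgebra $\clg_\omega$; Proposition~\ref{id-sub} makes $\clg_\omega$ right invariant and $\phi$-expected; Proposition~\ref{rightHaarpres} makes this same $L_\omega$ preserve $\psi$ as well, so that $\clg_\omega$ is expected; and the implication (i)$\Rightarrow$(ii) of Theorem~\ref{thm:Haar-equiv} makes $\clg_\omega$ symmetric. For the round trip I would observe that $L_\omega$ is a $\phi$-preserving conditional expectation onto $\clg_\omega$, so by the uniqueness clause of Lemma~\ref{condexp} it coincides with $E_{\clg_\omega}$; hence $\omega_{\clg_\omega}=\Cou\circ E_{\clg_\omega}=\Cou\circ L_\omega=(\omega\ot\Cou)\Com=\omega$, the last equality because $(\id\ot\Cou)\Com=\id$.

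For the reverse assignment $\clg\mapsto\omega_\clg$, let $\clg$ be symmetric, right invariant and expected, with conditional expectation $E_\clg$ preserving $\phi$ and $\psi$. In particular $\clg$ is right invariant and $\psi$-expected, so Proposition~\ref{sub-id} provides a unique idempotent state $\omega$ with $\clg=L_\omega(\alg)$; moreover the proof of that proposition, through Lemma~\ref{invar}, identifies $E_\clg$ \emph{itself} with $L_\omega$, whence $\omega=\Cou\circ L_\omega=\Cou\circ E_\clg=\omega_\clg$. Since $\clg=\clg_\omega$ is symmetric, the implication (ii)$\Rightarrow$(i) of Theorem~\ref{thm:Haar-equiv} shows $\omega$ is a Haar idempotent, so $\omega_\clg$ is a Haar idempotent and $\clg_{\omega_\clg}=L_{\omega_\clg}(\alg)=\clg$. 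The order statement I would settle exactly as in Theorem~\ref{thm:unimodular-corr}, using $L_\mu L_\nu=L_{\nu\star\mu}$, the injectivity of $\mu\mapsto L_\mu$, and Proposition~\ref{antisymm}. The only step that is not pure bookkeeping is the identification $E_\clg=L_{\omega_\clg}$ in this last direction: it requires extracting from the proof of Proposition~\ref{sub-id} that the conditional expectation, not merely its range, has the form $L_\mu$. All the genuinely analytic difficulty — the modular-element computation and the characterisation of symmetry — has already been absorbed into Proposition~\ref{rightHaarpres} and Theorem~\ref{thm:Haar-equiv}.
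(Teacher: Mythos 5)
Your proposal is correct and follows exactly the route the paper takes: its proof of Theorem~\ref{Haarcorresp} is precisely the combination of Propositions~\ref{id-sub}, \ref{sub-id}, \ref{rightHaarpres} and Theorem~\ref{thm:Haar-equiv}, with the bijectivity and order-preservation checks left implicit. Your additional details (the uniqueness clause of Lemma~\ref{condexp} to identify $E_{\clg_\omega}=L_\omega$, and the observation that the proof of Proposition~\ref{sub-id} yields $E_\clg=L_{\omega_\clg}$ itself) are just the bookkeeping the paper suppresses.
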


\begin{proof}
The theorem is a consequence of Propositions \ref{id-sub},
\ref{sub-id}, \ref{rightHaarpres} and Theorem \ref{thm:Haar-equiv}.
\end{proof}

Let $\alg$ be as in the previous theorem.
It is shown in  \cite{S} that there is a one-to-one correspondence
between compact quantum subgroups of $\alg$
and symmetric, right invariant $C^*$-subalgebras $\clg$ of $\alg$
that have a conditional expectation $E_\clg:\alg\to\clg$ such that
$(E_\clg\otimes \id_\alg)\circ\Com = \Com\circ E_\clg$.
(We have changed `left invariant' of  \cite{S}
to `right invariant' to conform with the current terminology.)
The previous theorem replaces the condition
$(E_\clg\otimes \id_\alg)\circ\Com = \Com\circ E_\clg$
with a formally weaker one: that $E_\clg$ is
invariant under both left and right Haar weights.
The latter condition is also much more natural.

We would like to finish the section with a few more general
remarks. Given a locally compact quantum group $\alg$,
the determination of all of its compact quantum subgroups is a difficult
technical problem; in fact the answer to this question does not seem
to be known for any of the fundamental examples of
genuine quantum groups such as the quantum $E(2)$ \cite{worE2}, quantum $az+b$
\cite{worAzb} or quantum $\widetilde{SU(1,1)}$ \cite{SU11}. Our
results suggest a possible approach to this question via first
determining all idempotent states on these quantum groups. This idea
was successfully applied in \cite{FST} in the context of compact
quantum groups, for example for $U_q(2)$.
Of course to apply the techniques developed in this paper, we need to
know that the quantum group in question is coamenable.
The result below is probably very well-known to the experts,
but as we could not find an explicit reference, we sketch a short proof.

\begin{tw}
The locally compact quantum groups quantum $E(2)$,
quantum \hbox{$az+b$} and  quantum \hbox{$ax+b$}
are coamenable.
\end{tw}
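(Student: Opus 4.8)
The plan is to establish coamenability by exhibiting a bounded counit for each of these quantum groups, or equivalently (by \cite{bedos-tuset}) by showing that the trivial representation is weakly contained in the regular representation. For quantum $E(2)$ the most efficient route is to recall that $E(2)$ is a deformation of the classical group of motions of the plane, which is a solvable --- hence amenable --- locally compact group. I would first note that the $C^*$-algebra of quantum $E(2)$ is a crossed product of a commutative $C^*$-algebra by an action of $\bz$ (coming from the rotation part), and then invoke the fact that amenability of such crossed-product-type quantum groups is inherited from the classical building blocks: concretely, one checks that the Haar weight is a trace or at least that the counit extends continuously to the reduced $C^*$-algebra. The cleanest formulation: quantum $E(2)$ has a classical closed quantum subgroup isomorphic to the circle $\bt$ (the rotations), the quotient is (the commutative $C^*$-algebra of) the plane, and an extension of a coamenable quantum group by a coamenable quantum group is coamenable --- so coamenability of $E(2)$ follows from that of $\bt$ and $\br^2$, both of which are obvious since both are classical amenable groups.

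For quantum $az+b$ and quantum $ax+b$ the argument is structurally identical: these are deformations of the classical $ax+b$ group (the affine group of the line), which is solvable and therefore amenable. I would exhibit the analogous extension structure --- a classical abelian quantum subgroup together with an abelian quotient --- and again apply the stability of coamenability under extensions. Alternatively, and perhaps more directly, one can use Woronowicz's explicit description of these $C^*$-algebras as crossed products $C_0(X)\rtimes \br$ (or $\rtimes \bz$) for the appropriate locally compact space $X$ and an amenable acting group, and then cite the general principle that if a locally compact quantum group arises as such a crossed product by an amenable group action, its reduced and universal versions coincide, which is precisely coamenability.

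The technical steps, carried out in order, would be: (i) recall Woronowicz's presentation of each of the three $C^*$-algebras and of its coproduct; (ii) identify the relevant classical quantum subgroup (a torus or a copy of $\br$) and the corresponding quotient, verifying that the coproduct intertwines the quotient maps; (iii) invoke stability of coamenability under group-like extensions, referring for instance to the analogous passage already used in this paper when discussing compact quantum subgroups, together with the amenability of the classical pieces.

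\textbf{The main obstacle.} The genuinely delicate point is step (iii): there is no single clean reference in the literature for "an extension of a coamenable locally compact quantum group by a coamenable one is coamenable," and making the extension picture precise in the setting of \cite{KV} requires some care with multiplier algebras and the compatibility of the Haar weights. I would therefore likely sidestep it by the concrete crossed-product route, directly producing the bounded counit on the reduced $C^*$-algebra: since the acting group is amenable, its trivial representation is weakly contained in its regular representation, and this weak containment is transported through the crossed-product construction to give weak containment of the co-unit representation in the GNS representation of the Haar weight, which is exactly what coamenability asks for. Keeping that argument short --- as the theorem statement promises --- is the only real difficulty; everything else is bookkeeping with Woronowicz's explicit models.
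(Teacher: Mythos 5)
There is a genuine gap, and it sits exactly where you locate it yourself: step (iii). The claim that coamenability passes to extensions is not a citable result, and the extension picture you propose does not actually exist in the required form: for quantum $E(2)$ (and likewise for quantum $az+b$ and $ax+b$) the ``translation part'' is not a quantum-group quotient, because the coproduct mixes the two generators (e.g.\ $\Com(n)$ involves the unitary generator as well as $n$), so there is no short exact sequence of locally compact quantum groups with the circle $\bt$ (or $\br$) at one end and a commutative ``plane'' at the other; moreover the would-be quotient is a quantum homogeneous space, not a quantum group. Your fallback crossed-product argument does not close the gap either: amenability of $\bz$ or $\br$ gives at best that the full and reduced crossed products coincide, but you would still have to (a) identify the quantum group $C^*$-algebra carrying the faithful Haar weights with that reduced crossed product, and (b) show that the character you want --- value $1$ on the group-like generator, $0$ on the other --- is well defined and bounded on it and satisfies the counit identity $(\epsilon\ot\id_\alg)\Com=\id_\alg$; the phrase ``weak containment is transported through the crossed-product construction'' is precisely the step that is never argued.

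The paper's proof avoids all of this with one input you do not use: Woronowicz established that each of the three $C^*$-algebras is \emph{universal} for its defining commutation relations (Theorem 1.1 of \cite{worE2}, Proposition 4.2 of \cite{worAzb}, Proposition 3.2 of \cite{worZak}). Hence the prescription read off from the coproduct (for quantum $az+b$: value $1$ at $a$ and $0$ at $b$) automatically yields a continuous character on the given $C^*$-algebra, which is already the reduced version because these quantum groups fit the framework of \cite{KV} with faithful invariant KMS-weights (\cite{worHaar}, \cite{VanDaele}, \cite{BaajE2}); that character is the bounded counit, and coamenability follows at once. If you insist on the crossed-product picture, the workable version is that the counit corresponds to the covariant pair built from the fixed point $0$ of the scaling action together with the trivial character of the acting group, is bounded on the full crossed product, and amenability of $\bz$ or $\br$ identifies full and reduced --- but this is a longer route to the same universality input, and the extension argument should simply be dropped.
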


\begin{proof}
As all the quantum groups in question were originally defined using the
language of multiplicative unitaries, we first need to note that they
all fit into the setup of \cite{KV}, i.e.\ that they admit faithful
invariant KMS-weights. For the last two examples this has been
established in \cite{worHaar} and in \cite{VanDaele}; for the first in
\cite{BaajE2} and in \cite{VanDaele}.

Hence it suffices to observe that all these locally compact quantum
groups admit bounded counits. In each case the potential definition
for the counit can be guessed from the explicit formulas for the
coproduct in terms of the (unbounded) generators. So for example for
quantum $az+b$ we see that the counit, if it exists, must take value
$1$ at $a$ and $0$ at $b$ (precisely speaking we should consider here
an extension of the potential counit to the algebra of affiliated
elements, but this does not affect the argument). The fact that in
each of the three cases the deduced prescription indeed defines a
continuous character follows from the universal properties of the
underlying $C^*$-algebras with respect to specific commutation
relations (these universal properties are established respectively in
Theorem 1.1 of \cite{worE2}, Proposition 4.2 of \cite{worAzb} and
Proposition 3.2 of \cite{worZak}). We leave the details to the
reader.
\end{proof}

Note that the coamenability of quantum $E(2)$ has been established in
the PhD thesis of Jacobs \cite{Jacobs} (the terminology used there is
different), and its counit is also explicitly mentioned in
\cite{worE2}. We do not know whether the quantum $\widetilde{SU(1,1)}$ is
coamenable.

\section{Further properties of right invariant $C^*$-subalgebras}

In this short section we show that the algebras of the form
$L_{\omega}(\alg)$, where $\omega$ is an idempotent state on a
locally compact quantum group $\alg$, admit a natural coaction of
$\alg$. Such subalgebras generalise quantum homogenous spaces which
are naturally associated with compact quantum subgroups of $\alg$ (see
Proposition \ref{Lomega-homogen} or Section 5 in \cite{Soltan}).

Assume again that $\alg$ is a coamenable locally compact quantum group
and let $\omega\in \alg^*$ be a Haar idempotent corresponding to a
fixed  compact quantum subgroup of $\alg$, denoted by $\blg$. The
algebra $\clg_{\omega}=L_{\omega}(\alg)$ should be thought of as the
algebra of functions in $\alg$ which are invariant under the action of
$\blg$. This is formalised in the following proposition.

\begin{propn}\label{Lomega-homogen}
Suppose that $\blg$ is a compact quantum subgroup of $\alg$ and $\pi$
is the surjective morphism from $\alg$ to $\blg$.
Let $\omega=h_{\blg} \circ \pi$ be the Haar idempotent associated with $\blg$. Then
\begin{equation}  \label{invLw}
L_{\omega}(\alg)=\set{a \in \alg}{(\pi \ot \id_{\alg})(\Com(a)) = 1_{\blg}\ot a}.
\end{equation}
\end{propn}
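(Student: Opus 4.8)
The plan is to prove the two inclusions in \eqref{invLw} separately. Write $\clg_\omega = L_\omega(\alg)$ and let $F = \set{a\in\alg}{(\pi\ot\id_\alg)\Com(a) = 1_\blg\ot a}$.

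\textbf{First I would show $\clg_\omega \sub F$.} Take $a\in\alg$ and consider $L_\omega(a) = (\omega\ot\id_\alg)\Com(a) = (h_\blg\circ\pi\ot\id_\alg)\Com(a)$. Applying $\pi\ot\id_\alg$ and using coassociativity together with the intertwining property $(\pi\ot\pi)\Com_\alg = \Com_\blg\pi$, one computes
\[
(\pi\ot\id_\alg)\Com(L_\omega(a)) = (\pi\ot\id_\alg)\Com\bigl((h_\blg\pi\ot\id_\alg)\Com(a)\bigr)
 = (h_\blg\pi\ot\pi\ot\id_\alg)(\id_\alg\ot\Com)\Com(a).
\]
Rewriting $(\id_\alg\ot\Com)\Com = (\Com\ot\id_\alg)\Com$ and pushing the first two legs through $\pi$ gives $(h_\blg\ot\id_\blg)\Com_\blg(\pi\ot\id_\alg)\Com(a)$, wait -- more carefully, one gets $(h_\blg\ot\id_\blg\ot\id_\alg)(\Com_\blg\ot\id_\alg)(\pi\ot\id_\alg)\Com(a)$, and then left invariance of the Haar state $h_\blg$ on $\blg$, namely $(h_\blg\ot\id_\blg)\Com_\blg = h_\blg(\cdot)1_\blg$, collapses the first two legs to $1_\blg\ot (h_\blg\pi\ot\id_\alg)\Com(a) = 1_\blg\ot L_\omega(a)$. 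Hence $L_\omega(a)\in F$. All manipulations are justified by strictness of the maps involved and the quantum cancellation laws, exactly as in the proof of Theorem~\ref{idempcondex}.

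\textbf{Then I would show $F\sub\clg_\omega$,} i.e.\ that $a\in F$ implies $L_\omega(a) = a$. Applying $h_\blg\ot\id_\alg$ to both sides of $(\pi\ot\id_\alg)\Com(a) = 1_\blg\ot a$ gives immediately $(h_\blg\pi\ot\id_\alg)\Com(a) = h_\blg(1_\blg)a = a$, that is $L_\omega(a) = a$. So $a = L_\omega(a)\in\clg_\omega$.

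\textbf{The main point to be careful about} is the strictness/affiliation bookkeeping in the first inclusion: the coproduct takes values in multiplier algebras, $\pi$ is only a morphism into a multiplier algebra, and $h_\blg\circ\pi$ is a functional, so one must check that all the slice maps and compositions used are strict and that the intertwining relation $(\pi\ot\pi)\Com_\alg = \Com_\blg\pi$ extends appropriately to the multiplier level. Since $\pi$ and the coproducts are nondegenerate $^*$-homomorphisms and functionals are always strict, these extensions exist and the computation goes through; this is the same kind of routine-but-necessary verification already performed several times earlier in the paper (e.g.\ in the proofs of Theorem~\ref{idempcondex} and Theorem~\ref{thm:Haar-equiv}), so I would simply invoke strictness rather than spell it out. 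The reverse inclusion is entirely elementary once the formula is written down.
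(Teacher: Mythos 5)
Your proposal is correct and follows essentially the same route as the paper: the inclusion $L_\omega(\alg)\sub F$ via coassociativity, the intertwining relation $(\pi\ot\pi)\Com = \Com_\blg\pi$ and the invariance $(h_\blg\ot\id_\blg)\Com_\blg = h_\blg(\cdot)1_\blg$, and the reverse inclusion by slicing $1_\blg\ot a$ with $h_\blg\ot\id_\alg$. The strictness caveats you flag are exactly the implicit justifications in the paper's computation, so nothing further is needed.
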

\begin{proof}
Fix $a \in \alg$. If $(\pi \ot \id_{\alg}) (\Com(a)) =  1_{\blg} \ot a$, then
\[ L_{\omega}(a) = (h_{\blg} \ot \id_{\alg})\big((\pi \ot \id_{\alg}) (\Com(a))\big) =  (h_{\blg} \ot \id_{\alg}) (1_{\blg} \ot a) = a, \]
so $a \in L_{\omega}(\alg)$.

On the other hand
\begin{align*}
(\pi \ot \id_{\alg}) &(\Com(L_{\omega}(a))) = (\omega \ot \pi \ot \id_{\alg}) (\Com_2(a)) =
(h_{\blg}\, \pi \ot \pi \ot \id_{\alg}) (\Com_2(a)) \\&= (h_{\blg}\ot \id_{\blg} \ot \id_{\alg})(\Com_{\blg} \ot \id_{\alg})(\pi \ot \id_{\alg}) (\Com(a))\\&= (h_{\blg} (\cdot) 1_{\blg} \ot \id_{\alg})(\pi \ot \id_{\alg}) (\Com(a))
= 1_{\blg} \ot L_{\omega}(a),
\end{align*}
so that if $a \in L_{\omega}(\alg)$ then $(\pi \ot \id_{\alg}) (\Com(a)) =  1_{\blg} \ot a$.
\end{proof}

In Theorem 5.1 of \cite{Soltan} So\l tan studies the properties of the algebra appearing on the right side of
equality \eqref{invLw} (he actually works with its right version). In the next theorem we show that the properties
established in that theorem do not really depend on the fact that the idempotent state $\omega$ appearing on the
left side of \eqref{invLw} is a Haar idempotent and provide a simpler proof.

\begin{tw}\label{Soltanimproved}
Let $\alg$ be a coamenable locally compact quantum group and let
$\clg$ be a right invariant $\psi$-expected
$C^*$-subalgebra of $\alg$. Then $\clg$ is a nondegenerate
$C^*$-subalgebra of $\alg$. Moreover the map
$\alpha:=\Com|_{\clg}$ is a nondegenerate $^*$-homomorphism from
$\clg$ to $M(\clg\ot \alg)$ such that
\begin{equation}\label{Coucommut} (\id_{\clg} \ot \Cou)\alpha =
  \id_{\clg},\end{equation}
\begin{equation}\label{residual} \overline{\tu{span}}\, \alpha(\clg)
  (1_\clg \ot \alg) = \clg \ot \alg.
\end{equation}
\end{tw}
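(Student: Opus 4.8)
The plan is to deduce everything from Proposition~\ref{sub-id} together with the multiplier-algebra machinery of Lemma~\ref{multmod}. By Proposition~\ref{sub-id} there is an idempotent state $\omega$ with $\clg = L_\omega(\alg)$, and then $L_\omega$ is a conditional expectation onto $\clg$ by Theorem~\ref{idempcondex}. Writing $L_\omega = (\omega\ot\id_\alg)\Com$ and invoking coassociativity gives the intertwining identity $(L_\omega\ot\id_\alg)\Com = \Com\, L_\omega$ (this is part of Lemma~\ref{invar}); here $L_\omega\ot\id_\alg$ is a completely positive nondegenerate, hence strict, conditional expectation of $\alg\ot\alg$ onto $\clg\ot\alg$, so the intertwining identity is legitimate as an equality of strict maps $\alg\to M(\alg\ot\alg)$. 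Nondegeneracy of $\clg$ in $\alg$ is already contained in the proof of Proposition~\ref{w=wS}: slicing the cancellation law $\clsp\Com(\alg)(1_\alg\ot\alg)=\alg\ot\alg$ by $\omega\ot\id_\alg$ and applying Cohen's factorisation theorem shows that $\clg\alg$ is dense in $\alg$; in particular $\clg\ot\alg$ is nondegenerate in $\alg\ot\alg$ and $1_{M(\clg)}=1_\alg$.

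The heart of the argument is to show $\Com(\clg)\sub M(\clg\ot\alg)$, so that $\alpha=\Com|_\clg$ is a well-defined (automatically $^*$-homomorphic) map into $M(\clg\ot\alg)$. By Lemma~\ref{multmod} applied to the conditional expectation $L_\omega\ot\id_\alg$ onto the nondegenerate subalgebra $\clg\ot\alg$, its strict extension $\widetilde{L_\omega\ot\id_\alg}$ is a conditional expectation of $M(\alg\ot\alg)$ onto $M(\clg\ot\alg)$. For $c\in\clg$ one has $L_\omega(c)=c$, so the strictly extended intertwining identity gives $\widetilde{L_\omega\ot\id_\alg}(\Com(c)) = \Com(L_\omega(c)) = \Com(c)$, whence $\Com(c)$ lies in the range $M(\clg\ot\alg)$. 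Identity \eqref{residual} then follows from the same circle of ideas, used as a fixed-point argument: for $c\in\clg$ and $b\in\alg$ the element $\Com(c)(1_\alg\ot b)$ lies in $\alg\ot\alg$ by the cancellation law for $\alg$, while the $M(\clg\ot\alg)$-module property of $L_\omega\ot\id_\alg$ (Lemma~\ref{multmod}) together with the intertwining identity shows it is a fixed point of $L_\omega\ot\id_\alg$, hence lies in the range $\clg\ot\alg$; this gives $\clsp\,\alpha(\clg)(1_\clg\ot\alg)\sub\clg\ot\alg$. For the reverse inclusion I would apply the surjection $L_\omega\ot\id_\alg$ (onto $\clg\ot\alg$) to the cancellation law $\clsp\Com(\alg)(1_\alg\ot\alg)=\alg\ot\alg$ and use $(L_\omega\ot\id_\alg)(\Com(a)(1_\alg\ot b)) = \Com(L_\omega(a))(1_\alg\ot b)$ to get $\clg\ot\alg = (L_\omega\ot\id_\alg)(\alg\ot\alg)\sub\clsp\,\alpha(\clg)(1_\clg\ot\alg)$. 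Thus \eqref{residual} holds; it also entails that $\alpha$ is nondegenerate, since $\alpha(\clg)(1_\clg\ot\alg)\sub\clsp\,\alpha(\clg)(\clg\ot\alg)$ (multiply by a bounded approximate identity of $\clg\ot\alg$).

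Finally, I would derive the counit identity \eqref{Coucommut} from the counit axiom $(\id_\alg\ot\Cou)\Com = \id_\alg$: for $c\in\clg\sub\alg$ this yields $(\id_\alg\ot\Cou)\Com(c)=c$, and since $\Com(c)\in M(\clg\ot\alg)$ one only needs the routine observation that the second-leg slice by $\Cou$ is compatible with the nondegenerate inclusion $\clg\ot\alg\hookrightarrow\alg\ot\alg$ — that is, $(\id_\clg\ot\Cou)(z)=(\id_\alg\ot\Cou)(z)$ for $z\in M(\clg\ot\alg)$, which one checks with a bounded approximate identity of $\clg\ot\alg$ and strict continuity of the slice map — so that $(\id_\clg\ot\Cou)\alpha(c)=c$.

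I expect the main obstacle to lie not in any new idea but in the multiplier-algebra bookkeeping: making sure the intertwining identity $(L_\omega\ot\id_\alg)\Com=\Com\,L_\omega$ is invoked correctly at the level of $M(\alg\ot\alg)$, that the hypotheses of Lemma~\ref{multmod} (nondegeneracy of $\clg\ot\alg$ in $\alg\ot\alg$) are genuinely in force, and that $L_\omega\ot\id_\alg$ is strict. The clean point that makes \eqref{residual} work is the observation that $\Com(c)(1_\alg\ot b)$, sitting simultaneously in $\alg\ot\alg$ and being fixed by the conditional expectation $L_\omega\ot\id_\alg$, must lie in its range $\clg\ot\alg$.
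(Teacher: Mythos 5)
Your argument is correct and is essentially the paper's own proof: both rest on Proposition~\ref{sub-id}, the intertwining $(L_\omega\ot\id_\alg)\Com=\Com L_\omega$, the fact that $L_\omega\ot\id_\alg$ is a conditional expectation onto the nondegenerate subalgebra $\clg\ot\alg$ together with Lemma~\ref{multmod}, the fixed-point/range argument for membership in $\clg\ot\alg$, and applying $L_\omega\ot\id_\alg$ to the cancellation law to get \eqref{residual}. The only cosmetic difference is that you verify $\Com(c)\in M(\clg\ot\alg)$ by applying the strictly extended expectation to $\Com(c)$ itself, while the paper checks the same thing elementwise via $(L_\omega\ot\id_\alg)\bigl(\Com(c)(d\ot a)\bigr)=\Com(c)(d\ot a)$.
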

\begin{proof}
By Proposition \ref{sub-id} the algebra $\clg$ is of the form
$L_{\omega}(\alg)$ for some idempotent state $\omega \in \alg^*$.
Therefore, by the beginning of the proof of Lemma~\ref{w=wS},
$\clg$ is a nondegenerate $C^*$-subalgebra of $\alg$ (and $1_\clg = 1_\alg$).
Once the nondegeneracy of $\clg$ in $\alg$ has been established, to
show that $\alpha$ is a nondegenerate
$^*$-homomorphism from $\clg$ to $M(\clg\ot \alg)$ it suffices to
prove that it takes values in $M(\clg \ot \alg)$. We need to
show that for $c,d \in \clg$ and $a \in \alg$ the element
$x:=\Com(c)(d \ot a)$ is in $\clg \ot \alg$  (note that we already
know that $x$ belongs to $\alg \ot \alg$). Recall that by
coassociativity we have $(L_{\omega} \ot \id_{\alg}) \Com = \Com L_{\omega}$.
By Theorem \ref{condexp} $L_{\omega}$ is a conditional expectation
from $\alg$ onto $\clg$. This implies that $L_{\omega} \ot \id_{\alg}$
is also a conditional expectation, this time from $\alg \ot \alg$ onto
$\clg \ot \alg$.  Use the `extended' module property of conditional
expectations  (Lemma \ref{multmod}) to observe that
\[ (L_{\omega} \ot \id_{\alg}) (x)  = (L_{\omega} \ot \id_{\alg})
(\Com(c))\, (d \ot a) = \Com (L_{\omega}(c)) (d \ot a) =
\Com(c) (d \ot a), \]
so that  $(L_{\omega} \ot \id) (x) = x$ and $x \in \clg \ot \alg$.

The formula \eqref{Coucommut} is a direct consequence of the defining
property of the counit. Property \eqref{residual} can be proved along
the lines of the last paragraph -- we need to use the fact that for $a,b \in \alg$
\[
(L_{\omega} \ot \id_\alg)\big(\Com(a)(1_\alg \ot b)\big) =
(L_{\omega} \ot \id_\alg)(\Com(a))(1_\alg \ot b).
\]
Once this is noted, we simply observe that
\begin{align*}
\overline{\tu{span}}\,\alpha(\clg)  (1_\alg \ot \alg) &=
\overline{\tu{span}}\, \set{\Com(L_{\omega}(a)) (1_\alg \ot b)}{a,b\in\alg} \\
&= \overline{\tu{span}}\,
   \set{(L_{\omega}\ot\id_{\alg})\big(\Com(a)(1_\alg \ot b)\big)}{a,b \in
     \alg}\\
& = (L_{\omega}\ot \id_{\alg})(\alg \ot \alg) = \clg \ot \alg.
\end{align*}
\end{proof}

Using the (right-handed version of the) terminology introduced in
Definition 2.6 of \cite{imprim} the last theorem can be reformulated
in the following way.

\begin{cor}
If $\alg$ is a  coamenable locally compact quantum group and $\clg$ a
right invariant $\psi$-expected $C^*$-subalgebra of $\alg$, then the
coproduct of $\alg$ restricts to a continuous coaction of $\alg$ on $\clg$.
\end{cor}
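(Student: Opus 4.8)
The plan is to recognise that this corollary is just Theorem~\ref{Soltanimproved} rephrased in the language of Definition~2.6 of \cite{imprim}. Recall that, in the right-handed version of that terminology, a \emph{continuous coaction} of $\alg$ on a $C^*$-algebra $\clg$ is a nondegenerate $^*$-homo\-mor\-phism $\alpha\colon\clg\to M(\clg\ot\alg)$ that is coassociative,
\[
(\alpha\ot\id_\alg)\alpha = (\id_\clg\ot\Com)\alpha,
\]
and satisfies the Podle\'s density condition $\clsp\,\alpha(\clg)(1_\clg\ot\alg) = \clg\ot\alg$. First I would apply Theorem~\ref{Soltanimproved} with $\alpha := \Com|_\clg$: it gives at once that $\clg$ is nondegenerate in $\alg$, that $\alpha$ is a well-defined nondegenerate $^*$-homo\-mor\-phism into $M(\clg\ot\alg)$, that it satisfies the density condition \eqref{residual}, and moreover that it is counital in the sense of \eqref{Coucommut}. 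Thus the only point not already supplied by Theorem~\ref{Soltanimproved} is the coassociativity of $\alpha$.

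For coassociativity I would argue as follows. Since $\clg$ is nondegenerate in $\alg$, the subalgebra $\clg\ot\alg$ is nondegenerate in $\alg\ot\alg$, so $M(\clg\ot\alg)$ embeds canonically in $M(\alg\ot\alg)$, and the strict extension of the nondegenerate morphism $\Com\ot\id_\alg\colon\alg\ot\alg\to M(\alg\ot\alg\ot\alg)$ carries $M(\clg\ot\alg)$ into $M(\clg\ot\alg\ot\alg)$; this restriction is exactly the map denoted $\alpha\ot\id_\alg$. Likewise $\id_\clg\ot\Com$ is the restriction of $\id_\alg\ot\Com$. All the maps in sight are strict, hence extend to the multiplier algebras and compose as expected, so for $c\in\clg$,
\[
(\alpha\ot\id_\alg)\alpha(c) = (\Com\ot\id_\alg)\Com(c) = (\id_\alg\ot\Com)\Com(c) = (\id_\clg\ot\Com)\alpha(c),
\]
the middle equality being the coassociativity of $\Com$. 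Combined with the previous paragraph, this shows that $\alpha=\Com|_\clg$ is a continuous coaction of $\alg$ on $\clg$.

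I do not expect any genuine obstacle here: the substance of the statement is Theorem~\ref{Soltanimproved}, and what remains is routine bookkeeping with strict maps and the identification of $M(\clg\ot\alg)$ inside $M(\alg\ot\alg)$, both of which are covered by the preliminaries of Section~1, together with a check that the definition of continuous coaction adopted in \cite{imprim} is the one recalled above. If one's notion of coaction additionally demands injectivity of $\alpha$, that follows immediately from $(\id_\clg\ot\Cou)\alpha = \id_\clg$.
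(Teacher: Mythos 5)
Your proposal is correct and follows essentially the same route as the paper, which states the corollary as a direct reformulation of Theorem~\ref{Soltanimproved} in the (right-handed) terminology of Definition~2.6 of \cite{imprim}. Your only addition is the explicit verification of coassociativity of $\alpha=\Com|_{\clg}$ via the canonical embedding $M(\clg\ot\alg)\subset M(\alg\ot\alg)$ and strictness, which the paper leaves implicit and which is indeed routine.
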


Note finally that the algebra in \eqref{invLw} satisfies the
assumptions of Theorem \ref{Soltanimproved} by Propositions
\ref{id-sub} and \ref{rightHaarpres}, so Theorem \ref{Soltanimproved}
indeed generalizes Theorem 5.1 in \cite{Soltan}.

\vspace*{0.5cm}

\noindent \emph{Acknowledgments.}
The work on this paper was started during a special semester on
`Banach algebra and operator space techniques in topological group
theory' in Leeds in May/June 2010, funded by the EPSRC grant
EP/I002316/1. 
Both authors would like to thank the organisers of that semester for 
providing excellent research environment. 
We also thank Nico Spronk for support which, in particular,
enabled AS to visit University of Waterloo in December 2010. AS acknowledges
useful conversations with Piotr So\l tan regarding  the last part of
the paper. PS was partially supported by Academy of Finland.
We thank the referee for helpful comments.

\end{document}